\newtheorem{thm}{Theorem}
\newtheorem{prop}[thm]{Proposition}
\newtheorem{lemma}[thm]{Lemma}
\newtheorem{cor}[thm]{Corollary}
\newtheorem{qu}[thm]{Question}
\theoremstyle{definition}
\newtheorem{dfn}[thm]{Definition}
\newcommand{\pow}{\mathcal P}
\newcommand{\Ord}{\mathrm{Ord}}
\newcommand{\dom}{\mathrm{dom}}
\newcommand{\stcan}{\mathrm{stcan}}
\title{Feferman's Forays into the Foundations of Category Theory}
\author{
Ali Enayat, Paul Gorbow, and Zachiri McKenzie \\
\\
	\small University of Gothenburg \\
	\small Department of Philosophy, Linguistics, and Theory of Science \\
	\small Box 200, 405 30 G\"OTEBORG, Sweden \\
	\small \url{ali.enayat@gu.se}, \url{paul.gorbow@gu.se}, \url{zachiri.mckenzie@gu.se} \\
}
\begin{document}
\maketitle

\section{Introduction}

The foundations of category theory has been a source of many perplexities ever
since the groundbreaking 1945-introduction
of the subject by Eilenberg and Mac Lane; e.g.,  how is one to avoid Russell-like paradoxes and yet have
access to  objects that motivate the study in the first place, such as the category of all groups, or the  category of all topological spaces?
Solomon Feferman has grappled with such perplexities for over 45 years, as
witnessed by his six papers on the subject during the period
1969-2013 \cite{Fef69, Fef74, Fef77, Fef04, Fef06, Fef13}. Our focus in this paper is on two important, yet quite different set-theoretical systems
proposed by Feferman for the implementation of category theory: the $\mathrm{ZF}$-style system $\mathrm{ZFC/S}$ \cite{Fef69} and the $\mathrm{NFU}$-style
system $S^{* }$ \cite{Fef74, Fef06}; where $\mathrm{NFU}$ is Jensen's
urelemente-modification of Quine's New Foundations system $\mathrm{NF}$ of
set theory.\footnote { Feferman \cite{ Fef77, Fef13} has put forward poignant criticisms of the general case of using category theory as an autonomous foundation for mathematics. Moreover, he suggested that a theory of {\em operations and collections} should also be pursued as a viable alternative platform for category theory; e.g. systems of Explicit Mathematics \cite{Fef75} and Operational Set Theory \cite{Fef09}.}

Our assessment of Feferman's systems $\mathrm{ZFC/S}$
and $S^{\ast }$ will be framed by the following general desiderata (R) and (S) of
the whole enterprise of building a set-theoretical foundation for category theory. (R) is derived from
Feferman's work, especially \cite{Fef74, Fef06}, while (S) is extracted from
Shulman's excellent survey \cite{Shu08}.  Both (R) and (S) will be elaborated in Subsection 2.1.

\begin{enumerate}
\item[(R)] asks for the \em unrestricted existence \em of the category of all groups, the category of all categories, the category of all functors between two categories, etc., along with \em natural implementability of ordinary mathematics and category theory. \em
\item[(S)] asks for a certain \em relative distinction between large and small sets, \em and the requirement that they \em both enjoy the full benefits of the $\mathrm{ZFC}$  axioms. \em
\end{enumerate}

Feferman's choice for a system to meet the demands of (R) was motivated by the fact that in contrast with $\mathrm{ZFC}$-style systems, $\mathrm{NFU}$ accommodates a universal set of all sets, as well as the category of all groups, the category of all categories, etc. However, in order to deal with the fact that $\mathrm{NFU}$  is not powerful enough to handle some parts of ordinary mathematics and category theory, Feferman proposed an extension of $\mathrm{NFU}$, called $S^*$, which directly interprets $\mathrm{ZFC}$ by a constant, and he established the consistency of $S^*$  assuming the consistency of  $\mathrm{ZFC} + \exists \kappa \exists \lambda$ ``$\kappa < \lambda$ are inaccessible cardinals'' \cite{Fef74}. In our expository account of Feferman's work on  $S^*$, we refine Feferman's results in two ways: we show that $S^*$ interprets  a significant strengthening of the Kelley-Morse theory of classes; and we also demonstrate the consistency of a natural strengthening $S^{**}$ of $S^*$ within $\mathrm{ZFC} + \exists \kappa$ ``$\kappa$ is an inaccessible cardinal''.

A widely accepted partial (partial because it is absolute, not relative) solution to (S) was chosen by Saunders Mac Lane in his standard reference ``Categories for the Working Mathematician'' \cite{Mac98}, namely the theory $\mathrm{ZFC}$  + $\exists \kappa$ ``$\kappa$ is an inaccessible cardinal''. Here the sets of rank less than a fixed inaccessible cardinal are regarded as small. A similar (but full, i.e. relative) solution to (S) due to Alexander Grothendieck, requires the existence of arbitrarily large inaccessible cardinals. However, as demonstrated by Feferman  \cite{Fef69}, the needs of category theory in relation to (S) can already be met in a conservative extension  $\mathrm{ZFC/S}$ of $\mathrm{ZFC}$, thus fully satisfying (S), albeit with the consequence that, while working with categories and functors in this foundational system, one often needs to verify that they satisfy a property called small-definability.

Besides the aforementioned fine-tuning of Feferman's results on $S^*$, our paper also has two other innovative features that were inspired by Feferman's work. One has to do with the observation that the category $\mathbf{Rel}$ in $\mathrm{NFU}$ has products and coproducts indexed by $\{\{i\} \mid i \in V\}$ (the same existence result also holds for coproducts in $\mathbf{Set}$). The importance lies in that it marks a divergence from the analogous category of small sets with relations (respectively functions) as morphisms in a $\mathrm{ZFC}$ setting; this situation in $\mathrm{ZFC}$ is clarified by a theorem of Peter Freyd, which we prove in detail (Freyd's theorem also serves to exemplify why the large/small distinction is important for category theory). We consider this positive result on the existence of some limits for $\mathbf{Rel}$ and $\mathbf{Set}$ in NFU, to strengthen the motivation for exploring NFU-based foundations of category theory. Thus, another innovative feature is our proposal to champion the system $\mathrm{NFUA}$ (a natural extension of $\mathrm{NFU}$) in meeting the demands imposed by both (R) and (S).  It is known that $\mathrm{NFUA}$ is equiconsistent with $\mathrm{ZFC}$ + the schema: ``for each natural number $n$ there is an $n$-Mahlo cardinal''.\footnote{This equiconsistency result is due to Robert Solovay, whose 1995-proof is unpublished. The first-named-author used a different proof in \cite{Ena04} to establish a refinement of Solovay's equiconsistency result.} This puts $\mathrm{NFUA}$ in a remarkably close relationship to the system $\mathrm{ZMC/S}$, which is a strengthening of $\mathrm{ZFC/S}$ suggested by Shulman \cite{Shu08} as a complete solution to (S). 

\section{Requirements on foundations for category theory}

\subsection{The problem formulations (R) and (S)} \label{FormulationsRandS}

We will now elaborate each of the desiderata (R) and (S) mentioned in the introduction by more specific demands. The following requirements have been obtained from Feferman \cite{Fef74, Fef06}.

\begin{enumerate}
\item[(R1)] For any given kind of mathematical structure, such as that of groups or topological spaces, the associated category of all such structures exists.
\item[(R2)] For any given categories $A$ and $B$, the category $B^A$ of all functors from $A$ to $B$ with natural transformations as morphisms, exists. 
\item[(R3)] Ordinary mathematics and category theory, along with its distinction between large and small, are naturally implementable.
\end{enumerate}

From a purely formal point of view, a foundational system is only required to facilitate implementation of the mathematical theories it is founding. But in practice, it is also desirable for this implementation to be user friendly. This is what the informal notion ``naturally implementable'' above seeks to capture. For the purposes of this paper, it suffices to distinguish three levels of decreasing user friendliness within the admittedly vague class of natural implementations of category theory in set-theory: (1) The usual set-theoretical description of category theoretic notions can be accommodated directly with the membership relation symbol of the underlying language, and with no restriction. (2) As above, but restricted to a set or class. (3) The notions can be accommodated with another well motivated defined membership relation, and restricted to a set or class. For example, the system NFUA facilitates a level (3) implementation of category theory by means of the set of equivalence classes of pointed extensional well-founded structures. But we will see that an equiconsistent extension of NFUA facilitates a level (2) implementation.

Feferman actually denotes his requirements as ``(R1)-(R4)''. His (R1)-(R2) are as above. His (R4) asks for a consistency proof of the system relative to some standard system of set theory; which we take as an implicit requirement. His (R3) requires that the system allows ``us to establish the existence of the usual basic mathematical structures and carry out the usual set-theoretical operations'' \cite{Fef06}. Feferman proposed a system $S^*$, which will be looked at in detail in the present paper, as a partial solution to his problem (R). But at least on one natural reading of Feferman's (R), Michael Ernst has recently shown in \cite{Ern15} that Feferman's (R3) is inconsistent with (R1) and (R2). Ernst uses a theorem of William Lawvere, which formulates Cantor's diagonalization technique in a general category theoretic context \cite{Law69}. Although Ernst argues generally for the inconsistency of Feferman's (R), it is worth noting that there is a more direct result that applies to NFU: Feferman's (R3) may be understood to require that the category $\mathbf{Set}$ of sets and functions is cartesian closed, but two different proofs of the failure of cartesian closedness for $\mathbf{Set}$ in NF (also applicable to NFU) are provided in \cite{McL92} and \cite{For07}.

Our formulation of (R3) is weaker in the sense that we only require {\em natural implementability}. We will show that Feferman's system $S^*$ (as well as NFUA), solve this version of (R).

Shulman \cite{Shu08} does not provide an explicit list of requirements, but we extract the following from his exposition.

\begin{enumerate}
\item[(S1)] Ordinary category theory, along with its distinction between large and small, is naturally implementable.
\item[(S2)] The large/small-distinction is relative, in the sense that for any $x$, there is a notion of smallness such that $x$ is small.
\item[(S3)] ZFC is interpreted by $\in$, both when quantifiers are restricted to large sets, and when quantifiers are restricted to small sets.
\end{enumerate}

In order to avoid confusion, let us just clarify that Shulman does not directly advocate that a foundation of category ought to satisfy all of these conditions. Rather, in his concluding section he leaves that ``to the reader's aesthetic and mathematical judgement'' \cite{Shu08}. We have just extracted these conditions from the motivations Shulman gives for adopting one or another of the many ZFC-based foundations for category theory he considers. On our reading, Shulman's story-line for ZFC-based approaches culminates with systems aimed at fully satisfying (S).

\subsection{The distinction between large and small}

In ZFC, the familiar categories $\mathbf{Set}, \mathbf{Top}, \mathbf{Grp}$ can only be treated as proper classes. Still, it is perfectly possible to state and prove some theorems about a particular category, e.g. ``for any two groups, there is a group, which is their product''. However, if we want to prove that something holds for every category that satisfies some condition, say ``for every category with binary products, we have that \dots", and if we want this result to be applicable to $\mathbf{Set}, \mathbf{Top}, \mathbf{Grp}, \dots$, then ZFC is insufficient.

Several extensions of ZFC have been proposed for dealing with this situation, that are concerned with a distinction between large and small sets. In the standard reference on category theory by Saunders Mac Lane, ``Categories for the Working Mathematician'' \cite{Mac98}, ZFC + $\exists \kappa$ ``$\kappa$ is an inaccessible cardinal'', is chosen as the foundational system. The sets of rank less than $\kappa$ are considered small, and all sets are considered large. The set of small sets, $V_\kappa$, is then a model of ZFC. It is also common to say that $V_\kappa$ is a Grothendieck universe, but we will not use that terminology in the present paper. The point is that any mathematical theory that can be developed within ZFC, can be developed exclusively by means of small sets. In ZFC + $\exists \kappa$ ``$\kappa$ is an inaccessible cardinal'', we can easily construct $\mathbf{Set}, \mathbf{Top}, \mathbf{Grp}, \dots$ as the categories of all small sets, small groups, small topological spaces, \dots (these categories will not be small however). In fact, we can conveniently state and prove many things about categories, as witnessed by Mac Lane's comprehensive book.

One might suppose that the category theory unfolding from this approach would only utilize the notion of smallness to obtain a formal foundation for stating and proving results. But, as Shulman observes \cite{Shu08}, many results of category theory are actually concerned with this notion in a mathematically interesting way: For a simple example, we will prove in detail that (in such a ZFC-based approach) any small (large) category which has small (large) limits, is just a preorder. As a consequence, it is more interesting to study the situation of a large category having small limits, a condition satisfied by $\mathbf{Set}, \mathbf{Top}, \mathbf{Grp}$.  Important well-known examples making essential use of smallness are the Yoneda lemma and the adjoint functor theorems.

\section{A quick dip into category theory}

\subsection{Limits}

We have definitions of groups and group homomorphisms, topological spaces and continuous functions, and so on for many types of objects and morphisms in many areas of mathematics. One of the ways in which category theory is helpful, is that it proceeds abstractly from these basic data of objects and morphisms, to define standard notions like subobjects, quotients and products, for all categories at once. But category theory does not only provide guidelines for defining familiar notions in new categories. It also guides us when introducing new notions (beyond the familiar product and so on) in familiar categories. As an example, we will consider the category theoretic notion of limit, of which the notion of product is just one of the simplest instances.

Throughout the paper, we will occasionally use ``$X \in \mathbf{C}$'', as shorthand for ``$X$ is an object of the category $\mathbf{C}$''. Suppose that $\mathbf F : \mathbf J \rightarrow \mathbf C$ is a functor. A \em cone \em to $\mathbf F$ is an object $N$ of $\mathbf C$ together with a set of morphisms $\{n_X : N \rightarrow \mathbf F(X) \mid X \in \mathbf J\}$, such that for any $f: X \rightarrow_\mathbf J Y$, the following diagram commutes.

\begin{center}
\includegraphics{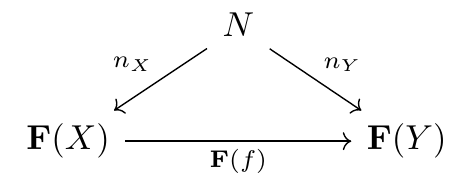}
\end{center}

A \em limit \em to $\mathbf F$ is a universal cone, i.e. a cone $U, \{u_X\}$ to $\mathbf F$, such that for any cone $N, \{n_X\}$ to $\mathbf F$, there is a unique $v : N \rightarrow_\mathbf C U$, such that for any $f: X \rightarrow_\mathbf J Y$, the following diagram commutes.

\begin{center}
\includegraphics{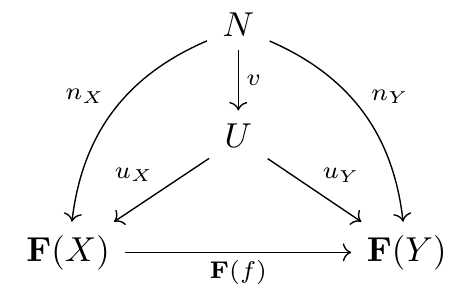}
\end{center}

Since the morphism $v$ (if it exists) is uniquely determined by the cone $N \dots$ and the limit $U \dots$, we call it the \em universal morphism from the cone $N \dots$ to the limit $U \dots$\em. Moreover, $\mathbf J$ is referred to as an \em index \em category, and $\mathbf F : \mathbf J \rightarrow \mathbf C$ is called a \em diagram\em. Of course, an index category can be any category, and a diagram can be any functor. We say that $\mathbf C$ has \em limits indexed by $\mathbf J$\em, if for each diagram $\mathbf F : \mathbf J \rightarrow \mathbf C$, it has a limit to $\mathbf F$. 

Small index categories and diagrams are of particular importance. We say that $\mathbf{C}$ has \em small limits\em, if for any small index category $\mathbf J$ and any small diagram $\mathbf F : \mathbf J \rightarrow \mathbf C$, we have a limit in $\mathbf C$ to $\mathbf F$. 

If the category $\mathbf C$ is a preorder, i.e. a category with at most one morphism between any two objects, and $a \geq b$ corresponding to an arrow $a \rightarrow b$, then a limit to $\mathbf F$ is the same as a least upper bound of the image of the object function of $\mathbf F$ (so the morphisms in $\mathbf J$ will be inconsequential). {\em Hence, the notion of the existence of limits for categories generalizes the notion of completeness for preorders.} We will see further below that with a ZFC-based foundation, every small category with small limits is a preorder. However, many large non-preordered categories, such as $\mathbf{Set}, \mathbf{Top}, \mathbf{Grp}$, have small limits.

We now show how a product is an example of a limit. Let $\mathbf 2$ be the category consisting of only two objects, say $0$ and $1$, along with their identity morphisms. Suppose $\mathbf C$ is any category, and $A, B \in \mathbf C$. Then the map sending $0$ to $A$ and $1$ to $B$ determines a functor $\mathbf F : \mathbf 2 \rightarrow \mathbf C$. A \em product \em of $A$ and $B$ is (defined as) a limit to $\mathbf F$. We may familiarly write this limit as $A \times B$, $\pi_0 : A \times B \rightarrow A$, $\pi_1 : A \times B \rightarrow B$, where $\pi_0, \pi_1$ are called the \em projection \em morphisms. The universality property now spells out, that for any object $N \in \mathbf C$ and any morphisms $f : N \rightarrow A$ and $g : N \rightarrow B$ (i.e. for any cone to $\mathbf F$), there is a unique morphism $\langle f, g \rangle : N \rightarrow A \times B$, such that the following diagram commutes.

\begin{center}
\includegraphics{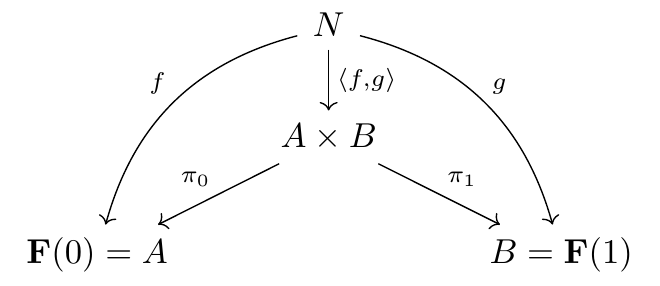}
\end{center}

If $\mathbf C = \mathbf{Set}$, then the standard cartesian product of $A, B$ together with the standard projection functions, is a limit to $\mathbf F$. Moreover, we would have $\langle f, g \rangle (x) = \langle f(x), g(x) \rangle$, for each $x \in N$, explaining why the ordered pair notation $\langle f, g \rangle$ is commonly abused for the purpose of denoting that morphism.

Of course, such a limit may not exist. But if, for every functor $\mathbf F : \mathbf 2 \rightarrow \mathbf C$, a limit to $\mathbf F$ exists, then we say that $\mathbf C$ has \em binary products\em. In this sense, given the notion of limit, we may think of the index category $\mathbf 2$ as providing all of the data for the notion of binary products. Similarly, a limit to a diagram from the index category with two objects and two morphisms $A \rightrightarrows B$ (in addition to the identity morphisms) is an {\em equalizer}; and a limit to a diagram from the index category with three objects and two morphisms $A \rightarrow C$, $B \rightarrow C$ (in addition to the identity morphisms) is a {\em pullback}. Thus, we may informally think of the notion of limit as defining a map from index categories to notions. The notions of product, equalizer and pullback would be three particular values of this map.

\subsection{The limits of a proper category (in ZFC)}

Recall that the category $\mathbf 2$ used to define binary products, only has identity morphisms. By considering index categories only having identity morphisms, more generally, we obtain stronger notions of product.

\begin{thm} \label{FreydLimits}
Assume that our set-theoretic foundation for category theory satisfies ZFC. Suppose that $\mathbf C$ is a category, and let $\mathrm{Arr}(\mathbf{C})$ be the category with the arrows of $\mathbf C$ as objects and only identity morphisms. If $\mathbf C$ has limits (i.e. products) indexed by $\mathrm{Arr}(\mathbf C)$, then $\mathbf C$ is a preorder.
\end{thm}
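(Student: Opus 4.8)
The plan is to argue by contradiction, running a Cantor-style diagonalization that pits the supply of morphisms out of a single object against the total number of arrows of $\mathbf C$. Suppose $\mathbf C$ is \emph{not} a preorder. Then there are objects $A, B$ and two \emph{distinct} parallel arrows $f, g : A \rightarrow B$. The aim is to manufacture strictly more morphisms out of $A$ than there are arrows in $\mathbf C$ altogether, which is absurd, since every morphism of $\mathbf C$ out of $A$ is in particular an arrow of $\mathbf C$.

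First I would feed the hypothesis the constant diagram. The assignment sending every object of $\mathrm{Arr}(\mathbf C)$ (that is, every arrow of $\mathbf C$) to $B$, and every identity morphism to $\id_B$, is a legitimate diagram $\mathbf F : \mathrm{Arr}(\mathbf C) \rightarrow \mathbf C$. By assumption it has a limit $P$, with projections $\pi_i : P \rightarrow B$ for each $i \in \mathrm{Arr}(\mathbf C)$; concretely, $P$ is a product of $\mathrm{Arr}(\mathbf C)$-many copies of $B$. The key move is then to encode subsets of $\mathrm{Arr}(\mathbf C)$ as morphisms $A \rightarrow P$. Given $S \subseteq \mathrm{Arr}(\mathbf C)$, the family $\{n^S_i : A \rightarrow B\}_{i \in \mathrm{Arr}(\mathbf C)}$ defined by $n^S_i = f$ when $i \in S$ and $n^S_i = g$ otherwise is a cone from $A$ to $\mathbf F$ (the commutativity conditions are vacuous, as $\mathrm{Arr}(\mathbf C)$ carries only identity morphisms). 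By the universal property there is a unique $h_S : A \rightarrow P$ with $\pi_i \circ h_S = n^S_i$ for all $i$. If $S \neq T$, choosing $i$ in their symmetric difference makes $\pi_i \circ h_S$ and $\pi_i \circ h_T$ equal to $f$ and $g$ in some order, hence distinct, so $h_S \neq h_T$. Thus $S \mapsto h_S$ is an injection $\pow(\mathrm{Arr}(\mathbf C)) \hookrightarrow \hm(A, P)$.

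To close the argument, observe that every element of $\hm(A, P)$ is by definition an arrow of $\mathbf C$, i.e.\ an object of $\mathrm{Arr}(\mathbf C)$, so $\hm(A, P) \subseteq \mathrm{Arr}(\mathbf C)$. Composing the two inclusions yields an injection $\pow(\mathrm{Arr}(\mathbf C)) \hookrightarrow \mathrm{Arr}(\mathbf C)$, contradicting Cantor's theorem and completing the proof.

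The step I expect to be the main obstacle is making this final cardinality clash fully rigorous, since its legitimacy depends on how $\mathbf C$ is sized. When $\mathbf C$ is small, $\mathrm{Arr}(\mathbf C)$ is a genuine set, $\pow(\mathrm{Arr}(\mathbf C))$ is its power set, and Cantor's theorem applies verbatim; this is exactly the case needed to conclude that every small category with small limits is a preorder, since for small $\mathbf C$ the products indexed by $\mathrm{Arr}(\mathbf C)$ are small limits. When $\mathbf C$ is a proper class, $\pow(\mathrm{Arr}(\mathbf C))$ is no longer a set-sized object and the assignment $S \mapsto h_S$ must be read as a definable class map; here one recasts the contradiction schematically, deriving absurdity from the diagonal subclass $\{\,i \in \mathrm{Arr}(\mathbf C) : i \notin S_i\,\}$, where $S_i$ denotes the subclass coded by the arrow $i$ under the injection. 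The only genuinely category-theoretic point to watch is the injectivity of $S \mapsto h_S$, which rests solely on the uniqueness clause of the universal property together with the fact that distinct projections detect the discrepancy between $f$ and $g$.
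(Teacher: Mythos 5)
Your proposal is correct and follows essentially the same route as the paper's proof: feed the hypothesis the constant diagram on $B$ indexed by $\mathrm{Arr}(\mathbf C)$, inject $2^{\mathrm{Arr}(\mathbf C)}$ into $\hm(A,P) \subseteq \mathrm{Arr}(\mathbf C)$ via the universal morphisms (injectivity detected by the projections, using $f \neq g$), and contradict Cantor's theorem. Your closing caveat about proper classes is a sensible extra precaution, but in the paper's setting it is moot, since the intended foundations treat large categories as (large) \emph{sets}, so $\mathrm{Arr}(\mathbf C)$ is a set and Cantor's theorem applies verbatim.
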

\begin{proof}
Assume towards a contradiction, that we have two different morphisms $f, g : A \rightarrow_\mathbf C B$. Let $P, \{p_X \mid X \in \mathrm{Arr}(\mathbf C)\}$ be a limit (product) to the constant diagram mapping every object of $\mathrm{Arr}(\mathbf C)$ to $B$. 
In suggestive notation, we may write $P = \Pi_{X \in \mathrm{Arr}(\mathbf C)} B$. 
By the universality property of limits, we now obtain a function  $\mathcal U : 2^{\mathrm{Arr}(\mathbf C)} \rightarrow \mathbf{C}(A, P)$, defined by sending each function $S : \mathrm{Arr}(\mathbf C) \rightarrow 2$ to the universal morphism from the cone 
\[A, \big\{a^S_X \mid X \in \mathrm{Arr}(\mathbf{C})\} \text{, where } \forall X \in \mathrm{Arr}(\mathbf C) . [(a^S_X = f \leftrightarrow S(X) = 0) \wedge (a^S_X = g \leftrightarrow S(X) = 1)] ,\]
to the limit $P, \{\pi_X \mid X \in \mathrm{Arr}(\mathbf C)\}$.

Since $\mathbf C(A, P)$ is a subset of (the objects of) $\mathrm{Arr}(\mathbf C)$, it suffices to prove that $\mathcal U$ is injective, for then Cantor's theorem yields a contradiction. So suppose that $S, S' : \mathrm{Arr}(\mathbf C) \rightarrow 2$ are different. Then there is $X \in \mathrm{Arr}(\mathbf C)$, such that $S(X) \neq S'(X)$. Thus, we may assume that $a^S_X = f$ and $a^{S'}_X = g$. Since $P \dots$ is a limit, the following diagram commutes.

\begin{center}
\includegraphics{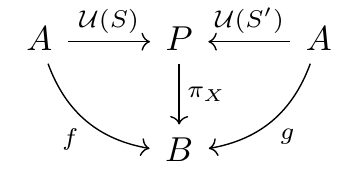}
\end{center}

Finally, using the assumption $f \neq g$, we conclude that $\mathcal U(S) \neq \mathcal U(S')$.
\end{proof}

\begin{cor}
Assume that our set-theoretic foundation satisfies ZFC. Then every small category, which has small limits, is a preorder; and every large category, which has large limits, is a preorder.
\end{cor}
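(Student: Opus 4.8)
The plan is to derive both assertions as immediate specializations of Theorem~\ref{FreydLimits}, the only work being to check that its hypothesis---the existence of limits indexed by $\mathrm{Arr}(\mathbf C)$---is already implied by the hypothesis of having small (respectively large) limits. Recall that in this ZFC-based setting a small set is one of rank below the fixed inaccessible $\kappa$, while every set counts as large; correspondingly a category is small (large) exactly when its collections of objects and of arrows are small (are sets). Both a small category and a large category are thus sets in the ambient ZFC, so Theorem~\ref{FreydLimits} is applicable to each once its hypothesis is met.

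First I would treat the small case. Let $\mathbf C$ be a small category with small limits. By definition the objects of $\mathrm{Arr}(\mathbf C)$ are the arrows of $\mathbf C$, and since $\mathbf C$ is small this object-collection is a small set; as $\mathrm{Arr}(\mathbf C)$ carries only identity morphisms, its arrow-collection is small as well. Hence $\mathrm{Arr}(\mathbf C)$ is a small index category, and every functor $\mathbf F : \mathrm{Arr}(\mathbf C) \rightarrow \mathbf C$ is a small diagram. By hypothesis $\mathbf C$ has a limit to each such $\mathbf F$; that is, $\mathbf C$ has limits indexed by $\mathrm{Arr}(\mathbf C)$. Theorem~\ref{FreydLimits} then forces $\mathbf C$ to be a preorder.

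The large case runs in parallel. If $\mathbf C$ is large, then $\mathbf C$ is a set, so its arrows form a set and $\mathrm{Arr}(\mathbf C)$ is a large index category with every diagram $\mathrm{Arr}(\mathbf C) \rightarrow \mathbf C$ large. Having large limits therefore yields, in particular, limits indexed by $\mathrm{Arr}(\mathbf C)$, and Theorem~\ref{FreydLimits} again applies to conclude that $\mathbf C$ is a preorder. (It is worth noting in passing that this is consistent with $\mathbf{Set}, \mathbf{Top}, \mathbf{Grp}$ being non-preordered large categories, precisely because those categories have only small limits, not large ones.)

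I do not anticipate a genuine obstacle, as all the mathematical content is packed into Theorem~\ref{FreydLimits}. The single point deserving care is the size bookkeeping: one must confirm that $\mathrm{Arr}(\mathbf C)$ lands in the same size class as $\mathbf C$, so that the assumed completeness property is strong enough to supply the one product invoked in the proof of the theorem. This is clear, since $\mathrm{Arr}(\mathbf C)$ is constructed solely from the arrows of $\mathbf C$ together with identities, and hence introduces no set larger than what $\mathbf C$ already provides---small when $\mathbf C$ is small, and a set when $\mathbf C$ is large.
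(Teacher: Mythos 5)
Your proposal is correct and takes essentially the same route as the paper, whose entire proof is the observation that a small (large) category has a small (large) set of morphisms, so that $\mathrm{Arr}(\mathbf C)$ falls in the same size class and Theorem~\ref{FreydLimits} applies. You have merely made explicit the size bookkeeping that the paper leaves implicit.
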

\begin{proof}
A small category has a small set of morphisms, and a large category has a large set of morphisms.
\end{proof}

So in terms of large and small, the strongest notion of ``having all limits'', that may still be interesting for a category theory based on a foundation satisfying ZFC, is that of a large category having small limits. Therefore, some authors plainly call such a category \em complete\em. It turns out that many of the most familiar categories are complete, including $\mathbf{Set, Top, Grp}$. The category of small metric spaces and the category of small fields are examples of large categories which are not complete.

\section{Approaches to satisfying (S)}

This section is a brief summary of sections 8-11 in Shulman's exposition \cite{Shu08}, where various ZFC-based approaches to the foundations of category theory are assessed and compared in detail.

\subsection{Approaches based on inaccessible cardinals} \label{ApproachesInaccessibles}

We have already mentioned the system ZFC + $\exists \kappa$ ``$\kappa$ is an inaccessible cardinal'', which seems to work for most applications. We take Mac Lane's book \cite{Mac98} as sufficient evidence that it satisfies (S1). It fully satisfies (S3): Firstly, the set of small sets, $V_\kappa$, is a model of ZFC. Secondly, since every set is large, and ZFC is a subset of the theory, the large sets satisfy ZFC. This is an important difference from the system KMC (Kelley-Morse set theory with choice) -- which has also been suggested as a foundation for category theory -- where the classes (considered as large sets) do not satisfy ZFC.\footnote{
$\mathrm{KM} + \Pi^1_\infty$-AC is bi-interpretable with $T := \mathrm{ZFC} - \text{Power Set} + \exists \kappa$ ``$\kappa$ is an inaccessible cardinal, and $\forall x \; |x| \leq \kappa$'', where $\Pi^1_\infty$-AC is the schema of Choice whose instances are of the form $\forall s \: \exists X \: \phi(s, X) \rightarrow \exists Y \: \forall s \: \phi(s, (Y)_s)$, where $\phi(s, X)$ is a formula of class theory with set variable $s$ and class variable $Y$, and $(Y)_s$ is the ``$s$-th slice of $Y$'', i.e., $(Y)_s = \{x \mid \langle s, x \rangle \in Y\}$. This bi-interpretability was first noted by Mostowski; a modern account is given in a recent paper of Antos and Friedman \cite[§2]{AF15}, where $\mathrm{KM} + \Pi^1_\infty$-AC is referred to as $\mathrm{MK}^*$, and $T$ is referred to as $\mathrm{SetMK}^*$.
}

But ZFC + $\exists \kappa$ ``$\kappa$ is an inaccessible cardinal'' imposes a rigid notion of smallness, fixed once and for all. Hence, it does not satisfy requirement (S2). The results of category theory involving the notion of smallness, do not depend on any fixed collection of objects. Rather, it is the relative smallness that is important. This is already evident in the proposal ZFC  + $\exists \kappa$ ``$\kappa$ is an inaccessible cardinal'': It is not important to choose, say the least inaccessible, as the demarcation of smallness, but only to pick some inaccessible. 

Grothendieck proposed a stronger system, ZFC + ``there are arbitrarily large inaccessible cardinals'', to the following effect. For any set $A$, there is a notion of smallness such that $A$ is small, and the set of small sets is a model of ZFC: Pick an inaccessible $\kappa$ larger than the rank of $A$, and stipulate that a set is small if and only if it is an element of $V_\kappa$. Thus, this theory, which is known as Tarski-Grothendieck set theory (or just TG), satisfies (S2). It also satisfies (S1) and (S3) for the same reasons as given for ZFC + $\exists \kappa$ inaccessible above.

A worry with Mac Lane's approach, which is even more worrisome in Grothendiek's approach, is that we increase the consistency strength of the theory. The motivations for passing to these theories have had more to do with obtaining a useful notion of smallness, than with a pressing need from category theory for more consistency strength, so one might wonder if it is really necessary to introduce such large cardinal axioms. There are at least two concerns with increasing the consistency strength of the foundational system.

\begin{enumerate}
\item The risk that the theory is inconsistent increases.
\item The theory becomes inconsistent with other axioms that may turn out to be of interest to category theory.
\end{enumerate}

As for the first concern, the large cardinal axioms we have looked at so far, are much weaker than other ones, which have also been quite thoroughly studied by set theorists. Hence, the risk that e.g. TG is inconsistent, is thought to be small.

As for the second concern, the authors are not aware of any principles that are both attractive to category theory and consistent with ZFC, but inconsistent with ZFC + a large cardinal axiom. However, we will of course consider the principle (R1) in this paper, which is attractive to category theory but inconsistent with ZFC.

\subsection{Feferman's ZFC/S as a solution to (S)} \label{FefermanZFCoverS}

Now, even though we do not have strong reasons against including large cardinal axioms in a foundational system for category theory, a solution to (S) which is conservative over ZFC would yield insight on the large/small distinction, and is therefore of interest to the foundations of category theory. So how might we satisfy (S), while remaining at the consistency strength of ZFC? This is essentially the question which Feferman answers with his system ZFC/S, ``ZFC with smallness''. The idea of ZFC/S is to utilize the Reflection principle (provable in ZFC). 

\begin{thm}[Reflection principle in ZFC]
If $\Phi$ is a finite set of formulas, and $A$ is a set, then there is a $V_\alpha \supset A$, such that for any $\phi(x_0, \dots, x_{n-1}) \in \Phi$, and any $a_0, \dots, a_{n-1} \in V_\alpha$,
\[ \phi^{V_\alpha}(a_0, \dots, a_{n-1}) \leftrightarrow \phi(a_0, \dots, a_{n-1}) .\]
\end{thm}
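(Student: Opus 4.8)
The plan is to run the standard L\'evy--Montague reflection argument, whose engine is the Replacement axiom. First I would close $\Phi$ under subformulas; since $\Phi$ is finite the enlarged set is still finite, and reflecting it certainly reflects the original $\Phi$. I would also assume, without loss of generality, that the only logical primitives occurring are $\neg$, $\wedge$ and $\exists$, so that the eventual induction on formula structure has only a handful of cases.

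The heart of the argument is to manufacture, for each subformula of the form $\exists y\, \psi(y, \xs) \in \Phi$, a ``witness-rank'' function. Given a tuple $\as$, let $G_\psi(\as)$ be the least ordinal $\beta$ such that $V_\beta$ already contains a $y$ witnessing $\psi(y, \as)$, provided any witness exists in $V$ at all, and let $G_\psi(\as) = 0$ otherwise. For an ordinal $\xi$ I would then set $F_\psi(\xi) = \sup\{G_\psi(\as) : \as \in V_\xi\}$. This supremum exists precisely because $V_\xi$ is a set, hence so is $(V_\xi)^n$; the image of this set under $G_\psi$ is a set of ordinals by Replacement, and therefore has a supremum. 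This single appeal to Replacement is the only genuinely non-trivial use of ZFC, and it is the step I expect to be the crux of the proof.

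With these functions in hand, I would build an increasing $\omega$-sequence of ordinals. Pick $\alpha_0$ large enough that $A \subseteq V_{\alpha_0}$ (possible since the ranks of elements of $A$ form a set of ordinals), and set $\alpha_{k+1} = \max\bigl(\alpha_k + 1,\ \sup\{F_\psi(\alpha_k) : (\exists y\, \psi) \in \Phi\}\bigr)$, taking $\alpha = \sup_k \alpha_k$. By construction $\alpha$ is a limit ordinal with $A \subseteq V_\alpha$, and $V_\alpha$ is closed under witnesses in the following sense: for every $\exists y\, \psi \in \Phi$ and every $\as \in V_\alpha$, if some $y$ satisfies $\psi(y, \as)$ in $V$, then such a $y$ can already be found in $V_\alpha$. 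Indeed, any such tuple lies in some $V_{\alpha_k}$, and $\alpha_{k+1}$ was chosen precisely to absorb the required witness rank $G_\psi(\as) \leq F_\psi(\alpha_k)$.

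Finally I would prove the displayed biconditional by induction on the structure of $\phi \in \Phi$, which is where closure under subformulas pays off. Atomic formulas are absolute between $V_\alpha$ and $V$, and the cases for $\neg$ and $\wedge$ are immediate from the induction hypothesis. For $\exists y\, \psi$, the direction from $V_\alpha$ to $V$ is trivial; conversely, if $\phi(\as)$ holds in $V$ with $\as \in V_\alpha$, the closure property supplies a witness $b \in V_\alpha$, and the induction hypothesis applied to $\psi$ yields $\psi^{V_\alpha}(b, \as)$, so that $\phi^{V_\alpha}(\as)$ holds. This completes the reflection, the only substantive ingredient having been the Replacement-justified existence of the functions $F_\psi$.
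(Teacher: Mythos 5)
Your proof is correct and complete: it is the standard L\'evy--Montague reflection argument, with subformula closure, witness-rank functions $G_\psi$ and $F_\psi$ justified by Replacement (correctly identified as the crux), an increasing $\omega$-chain of ordinals whose limit $\alpha$ absorbs all witnesses, and a Tarski--Vaught-style induction on formula structure. The paper states this theorem without proof, treating it as a standard fact provable in ZFC, so your argument is precisely the canonical proof the authors implicitly rely on.
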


$V_\alpha$ may be understood as the model obtained in ZFC by restricting $\in$ to the set $V_\alpha$. If $\hat\phi$ is a name for $\phi$ in ZFC, then the formula $V_\alpha \models \hat\phi$, is defined recursively in the familiar Tarskian way of the semantics of first order logic. It naturally turns out that it is equivalent to the formula $\phi^{V_\alpha}$, which is obtained from $\phi$ by restricting each quantifier to $V_\alpha$. We say that the model $V_\alpha$ \em reflects \em the formulas $\Phi$. Hence, we implicitly assume that $\Phi$ consists of formulas in the language of set theory. 

The system ZFC/S is defined as follows. Add an extra constant symbol $\mathbb S$ to the language of set theory. ZFC/S is the theory in this language consisting of the axioms of ZFC, the axiom saying that $\mathbb S$ is transitive and supertransitive, i.e. the axioms
\[ x \in y \in \mathbb S \rightarrow x \in \mathbb S \] 
\[ x \subset y \in \mathbb S \rightarrow x \in \mathbb S ,\]
plus the axiom schema that, for each natural number $n$ and each formula $\phi(x_0, \dots, x_{n-1})$ in the language of set theory, adds the axiom
\[ \forall x_0 \in \mathbb S \dots \forall x_{n-1} \in \mathbb S . \big[ \phi^\mathbb S(x_0, \dots, x_{n-1}) \leftrightarrow \phi(x_0, \dots, x_{n-1}) \big].\]

Working in $\mathrm{ZFC/S}$, we interpret ``small'' as ``$\in \mathbb{S}$'' (every set is considered large). By the Reflection schema, we may interpret $\mathrm{ZFC}$ by restricting quantifiers to $\mathbb S$, so (S3) is satisfied.

\begin{thm} \label{ConZFCoverS}
If $\mathrm{ZFC}$ is consistent, then $\mathrm{ZFC/S}$ is consistent.
\end{thm}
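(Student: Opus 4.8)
The plan is to reduce the consistency of the schema-based theory $\mathrm{ZFC/S}$ to the consistency of finitely many of its axioms at a time, and then to verify each such finite fragment inside $\mathrm{ZFC}$ by interpreting the constant $\mathbb S$ as a suitable $V_\alpha$ supplied by the Reflection principle stated above. The guiding observation is that no single $V_\alpha$ can witness every instance of the Reflection schema simultaneously (that would amount to $V_\alpha \prec V$, which is not available), so a one-shot interpretation of all of $\mathrm{ZFC/S}$ is impossible; the schema must be handled fragment by fragment.

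First I would invoke the Compactness Theorem: $\mathrm{ZFC/S}$ is consistent if and only if every finite subset $T_0 \subseteq \mathrm{ZFC/S}$ is consistent. Any such $T_0$ contains only finitely many $\mathrm{ZFC}$-axioms, possibly the transitivity and supertransitivity axioms for $\mathbb S$, and finitely many instances of the Reflection schema — say those associated with the set-theoretic formulas $\Phi = \{\phi_1, \dots, \phi_k\}$. It therefore suffices to show that for each such finite $\Phi$, the corresponding $T_0$ is consistent relative to $\mathrm{ZFC}$.

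Next I would argue inside $\mathrm{ZFC}$. Applying the Reflection principle to the finite set $\Phi$ (with $A = \emptyset$, say) yields an ordinal $\alpha$ such that $V_\alpha$ reflects every $\phi_i \in \Phi$. Expanding the universe by setting $\mathbb S := V_\alpha$, I would check the three groups of axioms in $T_0$: the $\mathrm{ZFC}$-axioms hold because they hold in $V$ and neither the domain nor the membership relation is altered; the axioms asserting that $\mathbb S$ is transitive and supertransitive hold automatically, since $x \subseteq y \in V_\alpha$ forces $\rnk(x) \leq \rnk(y) < \alpha$, whence $x \in V_\alpha$; and each Reflection instance $\forall \bar x \in \mathbb S\, [\phi_i^{\mathbb S}(\bar x) \leftrightarrow \phi_i(\bar x)]$ holds by the choice of $\alpha$. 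Since $T_0$ is a fixed finite list of sentences, each translates (under $\mathbb S := V_\alpha$) into a set-theoretic sentence that $\mathrm{ZFC}$ proves, so $\mathrm{ZFC}$ proves that this expansion satisfies $T_0$; hence $\mathrm{Con}(\mathrm{ZFC})$ implies $\mathrm{Con}(T_0)$.

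Combining the two steps gives the result: if $\mathrm{ZFC/S}$ were inconsistent, compactness would produce an inconsistent finite fragment $T_0$, contradicting $\mathrm{Con}(\mathrm{ZFC}) \Rightarrow \mathrm{Con}(T_0)$. The step I would be most careful about is exactly this passage from ``every finite fragment is consistent'' to ``the whole schema is consistent'': it is essential that the Reflection schema of $\mathrm{ZFC/S}$ is restricted to formulas in the pure language of set theory, so that $\phi_i^{\mathbb S}$ is unambiguous and the $\mathrm{ZFC}$ Reflection principle applies verbatim, and it is essential that one never needs a uniform $\alpha$ working for all instances at once. Everything else is routine, the transitivity and supertransitivity axioms coming for free from the absoluteness of rank along the $V_\alpha$ hierarchy.
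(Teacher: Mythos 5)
Your proposal is correct and follows essentially the same route as the paper: both reduce to a finite fragment $\Psi = \Psi_{\mathrm{ZFC}} \cup \Psi_{\mathrm{Trans}} \cup \Psi_{\mathrm{Refl}}$ of $\mathrm{ZFC/S}$, apply the Reflection principle to obtain a $V_\alpha$ witnessing the finitely many Reflection instances, substitute $\mathbb S := V_\alpha$, and verify the three groups of axioms in $\mathrm{ZFC}$. The only cosmetic difference is that you package the finiteness step via the Compactness theorem, while the paper works proof-theoretically, substituting $V_\alpha$ for $\mathbb S$ directly in a hypothetical derivation of $\bot$.
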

\begin{proof}
Suppose that $\lightning_{\mathrm{ZFC/S}}$ is a proof of $\Psi \vdash \bot$, for some finite $\Psi \subset \mathrm{ZFC/S}$. Note that $\Psi = \Psi_\mathrm{ZFC} \cup \Psi_\mathrm{Trans} \cup \Psi_\mathrm{Refl}$, where $\Psi_\mathrm{ZFC} \subset \mathrm{ZFC}$, $\Psi_\mathrm{Trans} \subset \{\textrm{Transitivity, Supertransitivity}\}$ and $\Psi_\mathrm{Refl} \subset \textrm{Reflection schema}$. Each $\psi \in \Psi_\mathrm{refl}$ is of the form $\forall x \in \mathbb S . \big( \phi^\mathbb S (\vec{x}) \leftrightarrow \phi(\vec{x}) \big)$. By reflection, 
\[
\mathrm{ZFC} \vdash \exists V_\alpha . \bigwedge_{\psi \in \Psi_\mathrm{Refl}} \psi[\mathbb S := V_\alpha].
\label{refl}
\tag{\dag}
\]
Pick a witness $V_\alpha$ of (\ref{refl}). Then $\mathrm{ZFC} \vdash \Psi_\mathrm{Refl}[\mathbb S := V_\alpha]$ is just (\ref{refl}). $\mathrm{ZFC} \vdash \Psi_\mathrm{ZFC}[\mathbb S := V_\alpha]$ is trivial, and $\mathrm{ZFC} \vdash \Psi_\mathrm{Trans}[\mathbb S := V_\alpha]$ is basic. So there is a proof $p$ of $\mathrm{ZFC} \vdash \Psi[\mathbb S := V_\alpha]$. Concatenating $p$ with $\lightning_{\mathrm{ZFC/S}}[\mathbb S := V_\alpha]$ yields a proof $\lightning_{\mathrm{ZFC}}$ of $\mathrm{ZFC} \vdash \bot$.
\end{proof}

The technique used in the proof can be applied quite generally: Suppose we are building a proof from ZFC, in which we want to apply a finite set of theorems $T$ of ZFC/S, and in which we would like to consider all the elements of some set $A$ to be small. As in the proof of $\mathrm{Con(ZFC)} \rightarrow \mathrm{Con(ZFC/S)}$, the Reflection principle gives us 
\[
\mathrm{ZFC} \vdash \exists V_\alpha \supset A . \bigwedge_{\tau \in T} \tau[\mathbb S := V_\alpha].
\label{refla}
\tag{\ddag}
\] 

Thus, given any finite set of theorems $T$ of $\mathrm{ZFC/S}$ and any set $A$, all of whose elements we want to consider small, we can choose $V_\alpha \supset A$ and translate each theorem $\tau \in T$ of $\mathrm{ZFC/S}$ into the theorem $\tau[\mathbb S := V_\alpha]$ of $\mathrm{ZFC}$. Then we can apply the translated versions in a proof from $\mathrm{ZFC}$. 

On the other hand, suppose $T$ is an infinite theorem schema in $\mathrm{ZFC/S}$. By a generalized version of the Reflection principle, the same technique works if $T$ is $\Sigma_n$ (in Levy's hierarchy of set-theoretic formulae) for some finite $n$, but it \em does not work \em for an arbitrary $T$.

We conclude that $\mathrm{ZFC/S}$ gives us a relative large/small-distinction, so that (S2) is satisfied, albeit with the limitation on $T$ stated above.

What about (S1)? Can standard results of category theory be obtained in $\mathrm{ZFC/S}$? The evidence `on the ground' seems to indicate a positive answer; e.g., Andreas Blass has informed the authors that he used $\mathrm{ZFC/S}$ as a foundational platform in a course he taught on category theory at the University of Michigan, and that everything worked smoothly, except possibly for Kan extensions. The main issue with using $\mathrm{ZFC/S}$ in practice, seems to be that we often need to ensure that categories, functors, etc are ``small-definable'', i.e. that they can be defined by a formula of $\mathrm{ZFC/S}$ all of whose parameters are elements of $\mathbb S$. For example, consider restricting a properly large locally small category $\mathbf C$ to a small subset $A$ of the objects of $\mathbf{C}$.  If $\mathbf{C}$ is not small-definable, then the replacement axiom on $\mathbb S$ (obtained from the corresponding instance of the Reflection schema) cannot be applied. Therefore, we cannot conclude that the restriction of $\mathbf{C}$ to $A$ is small. But categories, functors, etc that are definable as classes over $\mathrm{ZFC}$, obviously have direct counterparts that are small-definable in ZFC/S. Therefore, in applications this condition is almost always satisfied. 

We have now justified the claim that $\mathrm{ZFC/S}$ is a conservative extension of $\mathrm{ZFC}$ satisfying (S), albeit with minor limitations.

\section{Approaches to satisfying (R)}

\subsection{A very short introduction to stratified set theory}

(R1) asks for the category of all sets, the category of all groups, the category of all categories, etc. A natural place to look for a solution to (R), is therefore set theories proving the existence of a universal set. An approach to a set theory with this property is the simple theory of types, developed by Bertrand Russell and Alfred North Whitehead, and simplified by Leon Chwistek and Frank Ramsey, independently. In the usual form, called TST, the theory has types indexed by $\mathbb N$. There are countably infinitely many variables of each type, and an element- and equality-symbol for each type. A formula is well-formed only if each atomic subformula is of the form $x^i =^i y^i$ or $x^i \in^i y^{i+1}$, where $i \in \mathbb{N}$ denotes a type. For each type the theory has a corresponding axiom of extensionality and an axiom schema of comprehension. Russell's paradox is avoided because the formula `$x \in y$' becomes well-formed only if $x$ is assigned one type lower than $y$, thus banishing any formula of the form `$x \in x$' from the language. But `$x^i =^i x^i$' is well-formed, so by comprehension there is a universal set of type $i+1$ for each type $i \in \mathbb{N}$. 

It is easily seen that $\mathrm{TST} \vdash \phi \leftrightarrow \mathrm{TST} \vdash \phi^+$, where $\phi^+$ is obtained by raising all the type indices in $\phi$ by $1$: A proof of $\phi$ can be turned into a proof of $\phi^+$ simply by raising all the type indices in the proof by $1$, and vice versa. From reflection on this fact, Quine was led to suggest the theory NF \cite{Qui37}, which is obtained from TST simply by forgetting about the types. The result is an untyped first-order theory with the axiom of extensionality and the the axiom schema of stratified comprehension. A formula is stratified if its variables can be assigned types so as to become a well-formed formula of TST. The stratified comprehension schema only justifies formation of sets which are the extensions of stratified formulas. But the well-formed formulas of NF are as usual in set theory. For example, in contrast to TST, the statement $\mathrm{NF} \vdash \exists x . x \in x$ makes sense, and is in fact witnessed true by the universal set $V$ obtained from the instance $\exists y . \forall x .(x \in y \leftrightarrow x = x)$ of the stratified comprehension axiom schema. 

It has turned out to be very difficult to prove the consistency of NF relative to a set theory in the Zermelo-Fraenkel tradition.\footnote{Two claimed proofs have fairly recently been announced: first by Randall Holmes \cite{Hol15}, and then by Murdoch Gabbay \cite{Gab14}.} However, if the extensionality axiom is weakened to allow for atoms, then the system NFU is obtained. This system was proved consistent relative to Mac Lane set theory\footnote{The axioms of Mac Lane set theory are specified in Section 6.} in 1969 by Ronald Jensen \cite{Jen69}. Jensen's proof combines two techniques due to Ernst Specker and Frank Ramsey respectively. Specker had showed essentially that NF is equiconsistent with TST plus an automorphism between the types, and Jensen used Ramsey's theorem to obtain the automorphism at the cost of weakened extensionality. Jensen's proof also gives the consistency of NFU plus the axiom of choice and the axiom of infinity. This system is equiconsistent with NFU + the axiom of choice + the axiom of type-level pair. It is standard in the literature to refer to this latter system simply as NFU, and we do the same in this paper. A thorough introduction to NFU is found here \cite{Hol98}.

The move to allow for atoms in NFU can be accomplished either by having extensionality only for non-empty sets, or by introducing a predicate of sethood and restricting extensionality to sets. We opt for the former alternative. It is then helpful to have a designated empty set; i.e. we pick an atom once and for all, and denote it by $\varnothing$. (The existence of atoms, now defined simply as sets without elements, is provided by applying stratified comprehension to the formula $x \neq x$.)

\subsection{Feferman's $S^*$ as a solution to (R)} \label{Sec:FefSstar}

Jensen's proof of the consistency of NFU appeared in 1969; it did not take long for Feferman to use NFU, in the early 1970's, as the source of a viable solution to (R). Indeed, it is easy to see that NFU satisfies (R1) and (R2). A detailed verification is found in \cite[§3.4]{Fef74}. But since NFU is equiconsistent with Mac Lane set theory, which is proved consistent in ZFC, it follows from G\"odel's second incompleteness theorem that NFU does not interpret ZFC. Let us now consider (R3). If ZFC is taken as the standard for ``ordinary mathematics'', then it is clear that NFU does not meet that standard. However, the equiconsistency of NFU with Mac Lane set theory, which arguably suffices for ordinary mathematics and category theory, suggests a path for advocating NFU as a foundation for category theory.

Feferman suggests the system $S^*$, which extends NFU with a constant symbol $\bar V$, and axioms ensuring that $\langle \bar V, \in \rangle$ is a model of ZFC (actually with stronger replacement and foundation schemata than required for that) \cite{Fef74, Fef06}.

The language of $S^*$, denoted $\mathcal{L}^*$, is the two-sorted extension of the language of set theory with set variables $x, y, z, \ldots$, class variables $X, Y, Z, \ldots$, a constant symbol $\bar V$ and a binary function symbol $P$. As we alluded to above, the function symbol $P$, which will act as a type-level pairing function, is inherited from the language of NFU, and the constant symbol $\bar V$ will be used to obtain a notion of smallness. The terms of $\mathcal{L}^*$ are generated from $\bar V$, and both set and class variables using the function symbol $P$. The atomic formulae of $\mathcal{L}^*$ are all of the formulae in the form $s \in t$ and $s=t$ where $s$ and $t$ are $\mathcal{L}^*$-terms. In order to state the axioms of $S^*$ we first need to extend the notion of stratification to $\mathcal{L}^*$-formulae.

\begin{dfn}
An $\mathcal{L}^*$-formula $\phi$ is said to be stratified if the following conditions can be satisfied:
\begin{enumerate}
\item Each term $s$ in $\phi$ can be assigned a natural number that we will call the type assigned to $s$;
\item the type assigned to a term $s$ is the same as the type assigned to every class variable occurring in $s$; 
\item each class variable of $\phi$ has the same type assigned to all of its occurrences;
\item for each subformula of $\phi$ of the form $s=t$, the type assigned to the term $s$ is the same as the type assigned to the term $t$;
\item for each subformula of $\phi$ of the form $s \in t$, if $s$ is assigned the type $n$ then $t$ is assigned the type $n+1$.
\end{enumerate} 
\end{dfn}

The theory $S^*$ is the $\mathcal{L}^*$-theory that is axiomatised by the universal closures of the following:
\begin{enumerate}
\item (Stratified Comprehension) for all stratified $\mathcal{L}^*$-formulae $\phi(X, \vec{Z})$,
$$\exists Y \forall X(X \in Y \iff \phi(X, \vec{Z}))$$  
\item (Weak Extensionality)
$$\mathcal{S}(X) \land \mathcal{S}(Y) \Rightarrow (X=Y \iff \forall Z(Z \in X \iff Z \in Y))$$
\item (Pairing) $P(X_1, X_2)=P(Y_1, Y_2) \Rightarrow X_1=Y_1 \land X_2=Y_2$
\item (Sets and Classes)
\begin{enumerate}
\item $\forall x \exists X(x=X)$
\item $X \in \bar{V} \iff \exists x(x=X)$
\item $X \in x \Rightarrow X \in \bar{V}$
\end{enumerate}
\item (Empty Set) $\exists!z \forall y(y \notin z)$ --- we use $\varnothing$ to denote the unique $z \in \bar V$ such that $\forall y(y \notin z)$
\item (Operations on Sets)
\begin{enumerate}
\item $\{x, y\} \in \bar{V}$
\item $\bigcup x \in \bar{V}$
\item $\mathcal{P}(x) \in \bar{V}$
\item $P(x, y)= \{\{x\}, \{x, y\}\}$
\end{enumerate}
\item (Infinity)
$$\exists a(\exists z(z \in a \land \forall y(y \notin z)) \land \forall x(x \in a \Rightarrow x \cup \{x\} \in a))$$
\item (Replacement) for all $\mathcal{L}^*$-formulae $\phi(x, y, \vec{Z})$,
$$\forall x \forall y_1 \forall y_2(\phi(x, y_1, \vec{Z}) \land \phi(x, y_2, \vec{Z}) \Rightarrow y_1=y_2) \Rightarrow \forall a \exists b \forall y(y \in b \iff \exists x(x \in a \land \phi(x, y, \vec{Z})))$$
\item (Foundation) for all $\mathcal{L}^*$-formulae $\phi(x, \vec{Z})$,
$$\exists x \phi(x, \vec{Z}) \Rightarrow \exists x(\phi(x, \vec{Z}) \land (\forall y \in x) \neg \phi(y, \vec{Z}))$$
\item (Universal Choice)
$$\exists C\left( \begin{array}{c}
\forall X \forall Y_1 \forall Y_2(P(X, Y_1) \in C \land P(X, Y_2) \in C \Rightarrow Y_1 = Y_2) \land\\
\forall X(\exists Y(Y \in X) \Rightarrow \exists Y(Y \in X \land P(X, \{Y\}) \in C))
\end{array}\right)$$
\end{enumerate}  

Since $S^*$ has a two-sorted language with set- and class-variables, we will call arbitrary objects \em classes\em, and we will call elements of $\bar V$ \em sets\em. This contrasts with the practice in ZFC, where one talks of proper classes, but cannot prove their existence as objects. In $S^*$, just as in NBG or KMC, classes exist as objects. It follows immediately from the definition of $S^*$, as an extension of NFU, that it satisfies (R1) and (R2). Feferman proves that $S^*$ is consistent from ZFC + $\exists \kappa \exists \lambda$ ``$\kappa < \lambda$ are inaccessible cardinals'' \cite{Fef74} using a technique from \cite{Jen69} based on the Erd\H{o}s-Rado theorem. In this paper, the combinatorics of Feferman's proof is adjusted so as to prove the consistency of $S^*$ already from ZFC + $\exists \kappa$ ``$\kappa$ is an inaccessible cardinal''. 

$S^*$ satisfies (R3) in a stronger sense than NFU does, and such considerations motivate its strong replacement and foundation schemata. Let us work in $S^*$, interpreting ``small'' as ``$\in \bar V$'' and (provisionally) ``large'' as ``anything''. Let $\mathbf{V}$ denote the category of all sets with functions as morphisms. In \cite{Fef74}, Feferman proves a version of the Yoneda lemma from $S^*$: 

\begin{lemma}[Yoneda in $S^*$]
If all the objects and morphisms of $\mathbf{C}$ are small, and $\mathbf{F}$ is a functor from $\mathbf{C}$ to $\mathbf{V}$, then for each object $A$ of $\mathbf{C}$, there is a bijection $\mathrm{yon}_A : \mathbf{F}(A) \rightarrow \mathbf{Nat}(\mathbf{C}(A, -), \mathbf{F})$. 
\end{lemma}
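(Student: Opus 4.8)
The plan is to carry out the classical proof of the Yoneda lemma and to treat the genuinely new work as the verification that every object the argument manufactures --- the representable functor $\mathbf{C}(A,-)$, each natural transformation it produces, the collection $\mathbf{Nat}(\mathbf{C}(A, -), \mathbf{F})$, and the map $\mathrm{yon}_A$ itself --- is a legitimate object of $S^*$. First I would define the two mutually inverse maps. In one direction, $\mathrm{yon}_A$ sends $a \in \mathbf{F}(A)$ to the family $\eta^a = (\eta^a_X)_X$ whose component at an object $X$ is the function $\eta^a_X : \mathbf{C}(A,X) \to \mathbf{F}(X)$ given by $\eta^a_X(f) = \mathbf{F}(f)(a)$. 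In the other direction, a natural transformation $\eta$ is sent to $\eta_A(\mathrm{id}_A) \in \mathbf{F}(A)$. The hypothesis that the objects and morphisms of $\mathbf{C}$ are small is what makes $\mathbf{C}(A,X)$ a set for every $X$ (a subcollection of the small morphism data), so that $\mathbf{C}(A,-)$ is an honest functor $\mathbf{C} \to \mathbf{V}$ and each component $\eta^a_X$ is a function between the two sets $\mathbf{C}(A,X)$ and $\mathbf{F}(X)$.

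The category-theoretic verifications are then exactly as over $\mathrm{ZFC}$ and are insensitive to the foundational setting. I would first check naturality of $\eta^a$: for $g : X \to Y$ and $f \in \mathbf{C}(A,X)$, both $\mathbf{F}(g)(\eta^a_X(f))$ and $\eta^a_Y(g \circ f)$ equal $\mathbf{F}(g \circ f)(a)$ by functoriality of $\mathbf{F}$, so $\eta^a \in \mathbf{Nat}(\mathbf{C}(A, -), \mathbf{F})$. That the two maps are inverse is the standard computation: from $a$ we recover $\eta^a_A(\mathrm{id}_A) = \mathbf{F}(\mathrm{id}_A)(a) = a$ since $\mathbf{F}(\mathrm{id}_A) = \mathrm{id}_{\mathbf{F}(A)}$; and from $\eta$, naturality applied to $f : A \to X$ and evaluated at $\mathrm{id}_A$ gives $\mathbf{F}(f)(\eta_A(\mathrm{id}_A)) = \eta_X(f \circ \mathrm{id}_A) = \eta_X(f)$, so the reconstructed transformation agrees with $\eta$ at every component.

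The heart of the matter, and the step I expect to be the main obstacle, is making all of these constructions live inside $S^*$, whose only unrestricted comprehension on arbitrary objects is \emph{stratified}. The guiding principle is to do as much as possible at the level of sets, where the strong (unstratified) Replacement, Foundation and Separation available inside $\bar V$ impose no stratification constraint at all: each $\eta^a_X$ is built inside $\bar V$ as an ordinary function between sets, and then, applying Replacement along the small object-data of $\mathbf{C}$, the assembled family $\eta^a$ and the collection $\mathbf{Nat}(\mathbf{C}(A, -), \mathbf{F})$ are kept within reach rather than escaping to proper-class size. The remaining global assembly --- coding a functor, a family of functions, and finally the graph of $\mathrm{yon}_A$ as sets of $P$-pairs --- is where type-level pairing is indispensable, since $P(u,w)$ sits at the common type of $u$ and $w$ and so function application does not inflate types.

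The subtle point in the stratification bookkeeping, which is where I expect the real difficulty to concentrate, is that the element $a$ occurs in two apparently incompatible roles: packaged with the output transformation in the pair $P(a,\eta^a)$ that defines the graph of $\mathrm{yon}_A$, and plugged into $\mathbf{F}(f)(a)$ deep inside the definition of the components of $\eta^a$, where a morphism and a set-element must be paired together. A naive reading forces $a$ to two different types. Defusing this requires choosing the coding of functors, of the sets $\mathbf{F}(X)$, and of their elements so that a single consistent type assignment exists for the defining formula, exploiting both type-level pairing and the latitude that the set sort enjoys under stratification. Once each object is secured --- the inner constructions by the full set-theoretic apparatus of $\bar V$ and the outer ones by stratified Comprehension --- the verifications of the preceding paragraph show that $\mathrm{yon}_A$ is the desired bijection.
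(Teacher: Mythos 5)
Your classical skeleton is fine: the maps $a \mapsto \eta^a$ with $\eta^a_X(f)=\mathbf{F}(f)(a)$ and $\eta \mapsto \eta_A(\mathrm{id}_A)$, the naturality check, and the two inverse computations are exactly the standard argument, and (for what it is worth) the paper itself offers no proof of this lemma, deferring to \cite{Fef74}. The gap is that the step you yourself single out as ``the heart of the matter'' is announced but never carried out, and the two mechanisms you invoke for it do not suffice. First, the unstratified Replacement, Separation and Foundation of $S^*$ act on \emph{sets}, i.e.\ elements of $\bar V$, and yield elements of $\bar V$; but the objects whose existence is at issue --- each $\eta^a$, the collection $\mathbf{Nat}(\mathbf{C}(A,-),\mathbf{F})$, and the graph of $\mathrm{yon}_A$ --- have arbitrary classes among their members, since $\mathbf{F}$ lands in $\mathbf{V}$ rather than in the small sets. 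They are therefore proper classes and can only be obtained from stratified Comprehension; ``applying Replacement along the small object-data'' produces nothing here. Second, the latitude of the set sort (set variables need not receive the same type at all occurrences) does defuse the type clashes on the object and morphism variables $X$ and $f$ --- and this, rather than securing the existence of hom-collections (which stratified Comprehension gives anyway; note the paper stresses the hypothesis is \emph{weaker} than local smallness, so $\mathbf{C}(A,X)$ need not be small), is the true role of the smallness hypothesis. But it cannot defuse the clash you correctly identify on $a$: since $\mathbf{F}(A)$ may be a proper class, $a$ is a class variable with one fixed type, and in the defining formula $\mathbf{F}(f)(a)$ forces $a$ two types below $\mathbf{F}$, while $P(a,\eta^a)$ forces $a$ to the type of $\eta^a$, which under the componentwise coding $\eta^a=\{P(X,\eta^a_X)\mid X\in\mathbf{C}\}$ lies two types higher. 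Nothing in your proposal removes this displacement.

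The missing idea is a \emph{re-coding of natural transformations that is type-level in $a$}, for instance $\eta^a:=\{P(f,w)\mid f\in\mathbf{C}(A,X),\ X\in\mathbf{C},\ w\in\mathbf{F}(f)(a)\}$, which tags the \emph{elements} of the values rather than the values themselves, sits at the same type as $a$ (so $P(a,\eta^a)$ is stratified, with the clashes pushed onto the type-flexible set variables $f$ and $X$), and from which the components are recovered as $\eta^a_X(f)=\{w\mid P(f,w)\in\eta^a\}$, modulo care with atoms. This is not optional bookkeeping: with the naive coding the statement is outright \emph{false}. Take $\mathbf{C}$ with one object $A$ and only $\mathrm{id}_A$, and $\mathbf{F}(A)=V$. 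Then each natural transformation is (a code of) a one-point function $\{P(\mathrm{id}_A,v)\}$ with $v\in V$, so $\mathbf{Nat}(\mathbf{C}(A,-),\mathbf{F})$ is in stratified bijection with a singleton image $\{\{v\}\mid v\in V\}$, and the type-shifted Cantor theorem quoted in the paper, $\big|\{\{y\}\mid y\in V\}\big|<|V|$, rules out \emph{any} bijection with $\mathbf{F}(A)$ --- canonical or not. So the truth of the lemma hinges on the choice of coding, and a proof in $S^*$ must exhibit one together with explicit type assignments witnessing that the comprehension instances for $\eta^a$, for $\mathbf{Nat}(\mathbf{C}(A,-),\mathbf{F})$, and for the graph of $\mathrm{yon}_A$ are stratified; your proposal names this obstacle accurately but leaves precisely this step unresolved.
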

$\mathbf{C}(A, -)$ denotes the covariant hom functor from $\mathbf{C}$ to $\mathbf{V}$ given by $A$, and $\mathbf{Nat}(\mathbf{C}(A, -), \mathbf{F})$ denotes the set of natural transformations from $\mathbf{C}(A, -)$ to $\mathbf{F}$. This differs in two ways from the standard Yoneda lemma. Firstly, we only require that the objects and morphisms are small, not the stronger condition that the category is locally small. Secondly, the co-domain of $\mathbf{F}$ is the category $\mathbf{V}$, not the category of small sets. But of course, local smallness of $\mathbf{C}$ is exactly what is needed for the hom functors to be small-valued, so this readily specializes to the standard statement.

This is a first step towards satisfying the category theory part of (R3), but it is a daunting task to verify all the standard results of category theory involving the large/small-distinction. We will instead indicate why we feel confident that almost all such results would go through. In the family of systems including some form of the stratified comprehension schema, the following notions turn out to be quite important.

\begin{dfn}
$A$ is \em cantorian \em if there is a bijection from $A$ to $\{\{a\} \mid a \in A\}$. $A$ is \em strongly cantorian \em if there is a bijection from $A$ to $\{\{a\} \mid a \in A\}$ that maps each $a \in A$ to $\{a\}$. We adopt $\stcan(A)$ as shorthand for ``$A$ is strongly cantorian''.
\end{dfn}

The following basic results are easily verified. $S^*$ proves that $\bar V$ is strongly cantorian. Moreover, NFU (and therefore $S^*$) proves that the powerset of a strongly cantorian set is strongly cantorian. NFU also proves that if $B$ is a set of bijections, each witnessing strong cantorianicity of its domain, then $\cup B$ is a bijection witnessing strong cantorianicity of its domain.

Working with the von Neumann ordinals $\mathrm{Ord}^{\bar V}$ in $\bar V$, and relying on the strong foundation axiom schema, we now wish to recursively define $\bar V_0 = \bar V$, $\bar V_{\alpha + 1} = \pow(\bar V_\alpha)$ and (if alpha is a limit ordinal) $\bar V_\alpha = \cup \{\bar V_\beta \mid \beta < \alpha\}$. Our intention is also to show by induction that $\bar V_\alpha$ is strongly cantorian, for any $\alpha \in \mathrm{Ord}^{\bar V}$. As we shall see, that will lend support to the claim that $S^*$ satisfies (R3) sufficiently. We will need the recursion theorem for NFU, which may be found here \cite[§15.1]{Hol98}. The foundation axiom schema of $S^*$ implies that $\Ord^{\bar V}$ forms a well-founded class. So we obtain the following special case of the recursion theorem for recursion over $\Ord^{\bar V}$.

\begin{thm}[Recursion in NFU and $S^*$]
Suppose that
\[\mathcal G : \{H : \alpha \rightarrow V \mid \alpha \in \mathrm{Ord}^{\bar V}\} \rightarrow \{\{x\} \mid x \in V\}.\]
Then there is a unique function $F : \Ord^{\bar V} \rightarrow V$, such that $\{F(\alpha)\} = \mathcal G(F \restriction \alpha)$, for each $\alpha \in \Ord^{\bar V}$.
\end{thm}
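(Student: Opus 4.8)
The plan is to prove existence and uniqueness by the familiar ``set of approximations'' construction, transposed into the stratified setting and powered by well-founded recursion along $\Ord^{\bar V}$. As the excerpt records, the foundation schema of $S^*$ makes $\in$ well-founded on $\Ord^{\bar V}$, so every least-counterexample argument below is legitimate. It is essential here that foundation in $S^*$ is schematic over \emph{all} $\mathcal L^*$-formulae, not merely the stratified ones, since the predicates to which I apply it (for example ``$F_1(\beta) \neq F_2(\beta)$'' or ``$\alpha \notin \dom F$'') need not be stratified.

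Call $h$ an \emph{attempt} if $h$ is a function whose domain is a von Neumann ordinal lying in $\bar V$ and $\{h(\beta)\} = \mathcal G(h \restriction \beta)$ holds for every $\beta \in \dom h$. First I would verify that ``$h$ is an attempt'' is a stratified condition on $h$, with $\mathcal G$ as a parameter. Assigning $h$ the type $n+1$ forces each $h(\beta)$ and each $\beta$ to have type $n$, hence $h \restriction \beta$ to have type $n+1$; then $\{h(\beta)\}$ sits at type $n+1$ and so does $\mathcal G(h \restriction \beta)$, so the defining equation stratifies. (I would phrase ``domain is an ordinal of $\bar V$'' via the stratified von Neumann ordinal predicate rather than via membership in $\Ord^{\bar V}$, to avoid forcing an inconsistent type on $\Ord^{\bar V}$.) This is exactly the reason $\mathcal G$ is made to output a singleton rather than a value: the singleton absorbs the one-type jump between $h$ and its values, and this is precisely what lets the recursion be captured by a comprehension schema that only recognises stratified formulae. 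The naive equation $F(\alpha) = \mathcal G(F \restriction \alpha)$ is unstratifiable and its graph cannot be produced by comprehension.

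Granting this, stratified comprehension yields the object $F = \{\langle \alpha, y \rangle : \text{some attempt } h \text{ has } \langle \alpha, y \rangle \in h\}$, whose defining formula is stratified by the same bookkeeping. I would then run the three standard steps. \emph{Coherence:} any two attempts agree on the intersection of their domains, for otherwise a least $\beta$ at which attempts $h_1, h_2$ disagree gives $h_1 \restriction \beta = h_2 \restriction \beta$, whence $\{h_1(\beta)\} = \mathcal G(h_1 \restriction \beta) = \mathcal G(h_2 \restriction \beta) = \{h_2(\beta)\}$, a contradiction; consequently $F$ is a well-defined function. \emph{Totality:} $\dom F = \Ord^{\bar V}$, for otherwise the least $\alpha \notin \dom F$ has the property that $g := F \restriction \alpha$ is an attempt with domain $\alpha$, and taking the unique $y$ with $\{y\} = \mathcal G(g)$ the extension $g \cup \{\langle \alpha, y \rangle\}$ is again an attempt, forcing $\alpha \in \dom F$. \emph{Uniqueness:} any two functions on $\Ord^{\bar V}$ satisfying the recursion are themselves attempts, so coherence makes them equal.

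The hard part is not the recursion bookkeeping, which is routine once well-foundedness is available, but the stratification verification in the second paragraph: one must be sure that ``attempt'' and the graph of $F$ are genuinely stratified $\mathcal L^*$-formulae, since otherwise $F$ simply fails to exist as an object. A more economical alternative would be to invoke the general recursion theorem for NFU \cite[\S15.1]{Hol98}, applied to the well-founded relation $\in \restriction \Ord^{\bar V}$, each of whose predecessor-classes $\{\beta : \beta \in \alpha\} = \alpha$ is a set in $\bar V$; the direct argument above is essentially the unwinding of that theorem in this special case, and I would present it in full precisely because the singleton-valued formulation of $\mathcal G$ deserves explanation.
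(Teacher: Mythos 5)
Your overall strategy is sound, and it is worth noting that it genuinely differs from the paper: the paper does not prove this theorem at all, but obtains it as a special case of the general recursion theorem of Holmes \cite[\S 15.1]{Hol98}, using the (unstratified) foundation schema of $S^*$ only to establish that $\Ord^{\bar V}$ is well-founded --- i.e.\ the paper takes exactly the ``more economical alternative'' you mention in your last paragraph, while you unwind it into a self-contained attempts construction. Your unwinding is essentially right: the type bookkeeping for the recursion equation ($h$ at type $n+1$, $\beta$ and $h(\beta)$ at type $n$, $h \restriction \beta$ and $\{h(\beta)\}$ at type $n+1$) is correct, your diagnosis of why $\mathcal G$ is singleton-valued matches the role this device plays in the paper's subsequent application, and your insistence that the least-counterexample arguments need foundation for \emph{all} $\mathcal L^*$-formulae agrees with the paper's remark that the construction relies on the strong foundation schema. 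The coherence/totality/uniqueness steps are the standard ones and go through.

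There is, however, one genuine error in precisely the step you single out as the hard part: there is no ``stratified von Neumann ordinal predicate.'' Transitivity, $\forall y\,(y \in \alpha \rightarrow y \subseteq \alpha)$, demands that $y$ receive both the type of $\alpha$ minus one (from $y \in \alpha$) and the type of $\alpha$ (from $y \subseteq \alpha$), so the von Neumann ordinal condition is unstratifiable as written --- this is exactly why NFU ordinarily works with ordinals as equivalence classes of well-orderings. The repair is available but uses machinery you did not invoke: since $\dom h \in \bar V$ and $\bar V$ is transitive and provably strongly cantorian in $S^*$, all quantifiers in the ordinal condition range over $\bar V$, and the Subversion theorem \cite[\S 17.5]{Hol98} (which the paper uses elsewhere, in the proof that $\bar{\bar V}$ satisfies separation) converts the condition into an equivalent stratified formula. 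Alternatively, and more simply, your worry about ``forcing an inconsistent type on $\Ord^{\bar V}$'' is misplaced: once $\Ord^{\bar V}$ exists as a class (which itself needs the strong cantorianity argument just described), the clause $\dom h \in \Ord^{\bar V}$ treats $\Ord^{\bar V}$ as a class \emph{parameter} of the comprehension instance; it occurs only once, so it can be assigned the single type it needs, and atomic membership in a parameter is stratified. With either repair your proof is complete; without one, the attempt predicate is not stratified and $F$ is not delivered by comprehension.
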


We now wish to apply this to obtain a hierarchy, recursively defined by $\bar V_0 = \bar V$, $\bar V_{\alpha + 1} = \pow(\bar V)$, and $\bar V_\alpha = \cup \{\bar V_\beta \mid \beta < \alpha\}$ (for $\alpha$ a limit). Note that in NFU, we define $\pow$ so that only the designated empty class, and no other atoms, are elements of a powerclass. The main obstacle to defining the required $\mathcal G$ of the recursion theorem, is that $\pow$ is not a function in NFU.

\begin{thm}
In $S^*$, there are unique functions $R, C : \Ord^{\bar V} \rightarrow V$, such that for all $\alpha \in \Ord^{\bar V}$
\[
\begin{array}{lcl}
R(\alpha) &=& \begin{cases}
\bar V & \textrm{if $\alpha = 0$} \\
\pow(R(\beta)) & \textrm{if $\alpha$ is a successor $\beta + 1$} \\ 
\cup \{R(\beta) \mid \beta < \alpha\} & \textrm{if $\alpha$ is a limit}
\end{cases}\\
C(\alpha) &=& \textrm{A bijection } R(\alpha) \rightarrow \{\{x\} \mid x \in R(\alpha)\} \textrm{ witnessing }\stcan(R(\alpha)).
\end{array}
\tag{\dag}
\]
\end{thm}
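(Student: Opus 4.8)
The plan is to fold the two desired functions into a single one and invoke the recursion theorem stated just above. Set $F(\alpha) = P(R(\alpha), C(\alpha))$, so that $F : \Ord^{\bar V} \to V$ packages both the hierarchy and its strong cantorian witnesses, and recover $R$ and $C$ as the two coordinate projections of $F$. The reason the witnesses $C$ must be carried along, rather than defining $R$ on its own, is exactly the obstacle flagged in the text: $\pow$ is not a function in NFU, since the graph $\{P(x, \pow(x)) \mid x \in V\}$ is unstratified (it would place $x$ and $\pow(x)$ at a common type while $\pow$ raises type by one). Having the witness $C(\beta)$ of $\stcan(R(\beta))$ available at stage $\beta$ is precisely what lets the successor step $R(\beta+1) = \pow(R(\beta))$ be expressed by a stratified generating functional.

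Concretely, I would define the functional $\mathcal G$ by cases on the domain $\alpha$ of its argument $H$. For $\alpha = 0$ (i.e. $H = \varnothing$), let $\mathcal G(H) = \{P(\bar V, c_0)\}$, where $c_0$ is the canonical witness of $\stcan(\bar V)$ supplied by the fact that $S^*$ proves $\bar V$ strongly cantorian. For $\alpha = \beta + 1$ a successor, read off $H(\beta) = P(R(\beta), C(\beta))$ and set $\mathcal G(H) = \{P(\pow(R(\beta)), c)\}$, where $c$ is the witness of $\stcan(\pow(R(\beta)))$ produced from $C(\beta)$ by the NFU theorem that the powerset of a strongly cantorian set is strongly cantorian. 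For $\alpha$ a limit, set $\mathcal G(H) = \{P(\bigcup \rng_1(H), \bigcup \rng_2(H))\}$, where $\rng_1$ and $\rng_2$ take the first and second coordinates of the values of $H$; here $\bigcup \rng_2(H)$ is a strong cantorian witness for $\bigcup \rng_1(H)$ by the NFU fact that a union of a set of such witnesses is again such a witness. On any $H$ not of this form the value of $\mathcal G$ may be set arbitrarily (say $\{\varnothing\}$), since the induction below shows such $H$ never arise as restrictions of the solution.

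The main obstacle is verifying that $\mathcal G$ so defined is a genuine stratified set function, so that the recursion theorem applies; the delicate point is the successor clause. Tracking types, writing $F(\alpha)$ at type $s$ forces $F \restriction \alpha$ — and hence the argument of $\mathcal G$ — at type $s+1$, while $\mathcal G(F \restriction \alpha) = \{F(\alpha)\}$ is also at type $s+1$; thus $\mathcal G$ is type-level, the singleton merely recording that its values sit one type above the $F(\alpha)$ they encode. In the successor clause this forces $\pow(R(\beta))$ to appear at the very type of $R(\beta)$, whereas $\pow$ naturally raises type by one. The offsetting one level of type-lowering is exactly what $C(\beta)$ supplies: as a set-level, type-level bijection $R(\beta) \to \{\{x\} \mid x \in R(\beta)\}$, it identifies each $x \in R(\beta)$ with its singleton at a single type, and composing with it lets $\pow(R(\beta))$ be presented, stratifiedly, at the type demanded of $R(\beta+1)$. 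Making this presentation precise is the crux of the argument; I would then check that the zero clause and the (type-jump-free, hence routine) limit clause are likewise stratified.

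With $\mathcal G$ in hand, the recursion theorem yields a unique $F$ with $\{F(\alpha)\} = \mathcal G(F \restriction \alpha)$. I would then prove by induction along $\Ord^{\bar V}$ — legitimate since the Foundation schema of $S^*$ makes $\Ord^{\bar V}$ well-founded — the conjunction of three statements: that $F \restriction \alpha$ has the intended form, that its projections $R$ and $C$ satisfy the recursion $(\dag)$, and that $C(\alpha)$ witnesses $\stcan(R(\alpha))$. The last clause at limit stages needs the cumulativity $R(\xi) \subseteq R(\eta)$ for $\xi < \eta$, so that the witnesses cohere on overlaps and their union is a function; I would fold this into the same induction, its base case being $\bar V \subseteq \pow(\bar V)$. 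Taking $R$ and $C$ to be the projections of $F$ then gives existence, and uniqueness is inherited directly from the uniqueness clause of the recursion theorem, once the canonical choices made in defining $\mathcal G$ are fixed.
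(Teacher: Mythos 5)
Your strategy is the paper's own: fold $R$ and $C$ into a single $F(\alpha) = \langle R(\alpha), C(\alpha)\rangle$, define a generating functional by cases, invoke the recursion theorem, and then argue by induction along $\Ord^{\bar V}$ (licensed by the Foundation schema) that the projections satisfy $(\dag)$, with the limit clause taking unions of the witnesses. Your type bookkeeping is also right: $\mathcal G$ must be type-level (its singleton values sit one type above the $F(\alpha)$ they encode), which forces $\pow(R(\beta))$ to appear at the very type of $R(\beta)$, and the witness $C(\beta)$ is what pays that one-type debt.

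The genuine gap is that you stop exactly where the proof's content lies. You write the successor clause as $\mathcal G(H) = \{P(\pow(R(\beta)), c)\}$ and then concede that ``making this presentation precise is the crux'' --- but the literal term $\pow(R(\beta))$ is not stratified at the required type, so as written $\mathcal G$ is not given by a stratified set abstract and the recursion theorem cannot yet be applied. The paper exhibits the missing device: the type-level function $\pow'(X) = \pow\big(\bigcup X\big)$, defined on sets of singletons (stratified because $\bigcup$ lowers type by one and $\pow$ raises it by one), so that $R(\beta+1) = \pow'\big(\{C(\beta)(X) \mid X \in R(\beta)\}\big)$, where the inner abstract is type-correct precisely because $C(\beta)$ is \emph{syntactically} type-level even though extensionally $C(\beta)(X) = \{X\}$. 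Similarly, your clause ``$c$ is produced from $C(\beta)$ by the NFU theorem that the powerset of a strongly cantorian set is strongly cantorian'' invokes a bare existence statement, whereas inside $\mathcal G$ one needs a uniform stratified term building the new witness from the old. (Note that a witness of $\stcan$ is in fact unique --- it can only be the restriction of the singleton map --- so the issue is never one of choice, only of stratified definability.) The paper's term is
\[
C(\beta+1) = \big\{ \langle S, \{\textstyle\bigcup T\} \rangle \;\big|\; S \subseteq R(\beta) \wedge T = \{C(\beta)(X) \mid X \in S\} \big\},
\]
where writing $\{\bigcup T\}$ instead of the unstratified $\{S\}$ recodes the singleton one type lower; one then verifies the entire defining expression of $\mathcal G$ against an explicit type assignment, and finally proves by induction that $R(\alpha+1) = \pow(R(\alpha))$, $C(\alpha)(X) = \{X\}$, and $\dom(C(\alpha)) = R(\alpha)$, recovering $(\dag)$. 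Everything else in your outline (zero and limit clauses, cumulativity, uniqueness via the recursion theorem) matches the paper and is fine; but the two successor-stage terms are the heart of the theorem, not routine bookkeeping, and your proposal does not supply them.
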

\begin{proof}
We need to restate $(\dag)$ in a stratified form. First note that stratified comprehension allows us to form the function 
\[\pow' : \{X \mid \forall Y \in X . \exists Z . Y = \{Z\}\} \rightarrow V,\]
defined by $\pow'(X) = \pow(\cup X)$, which is stratified. Let $\bar B$ be the bijection from $\bar V$ to $\{\{X\} \mid X \in \bar V\}$, such that $\forall X \in \bar V . \bar B(X) = \{X\}$. We can now reformulate $(\dag)$ as follows.
\[
\begin{array}{lcl}
R(\alpha) &=& \begin{cases}
\bar V & \textrm{if $\alpha = 0$} \\
\pow' \big( \{C(\beta)(X) \mid X \in R(\beta)\} \big) & \textrm{if $\alpha$ is a successor $\beta + 1$} \\ 
\cup \{R(\beta) \mid \beta < \alpha\} & \textrm{if $\alpha$ is a limit}
\end{cases}\\
C(\alpha) &=& \begin{cases}
\bar B & \textrm{if $\alpha = 0$} \\
\big\{ \langle S, \{\cup T\} \rangle \mid S \subset R(\beta) \wedge T = \{C(\beta)(X) \mid X \in S\} \big\} & \textrm{if $\alpha$ is a successor $\beta + 1$} \\
\cup \{C(\beta) \mid \beta < \alpha\} & \textrm{if $\alpha$ is a limit}
\end{cases}
\end{array}
\]
For applying the recursion theorem, we wish to show the existence of the function $F$ defined on $\Ord^{\bar V}$ by $\alpha \mapsto \langle R(\alpha), C(\alpha) \rangle$. The definition of the needed $\mathcal G$ is implicit from the expression above. $\mathcal G$ is obtained by comprehension if the expression is stratified, which is easily verified with this type-assignment:
\[
\begin{array}{lcl}
R, C, \pow' &\mapsto& 2 \\
R(\alpha), R(\beta), C(\alpha), C(\beta), S, T, \bar V &\mapsto& 1 \\
X, C(\beta)(X) &\mapsto& 0 \\
\end{array}
\]
Therefore, unique functions $R$ and $C$ exist, satisfying the reformulated version of $(\dag)$. But it remains to show that our reformulated version of $(\dag)$ is equivalent to $(\dag)$. I.e. it remains to clarify that $R(\alpha + 1) = \pow(R(\alpha))$, that $C(\alpha)(X) = \{X\}$ and that $\dom(C(\alpha)) = R(\alpha)$, for all $\alpha \in \Ord^{\bar V}$ and all $X \in \dom(C(\alpha))$. We do this through an induction argument, justified by the strong foundation axiom schema of $S^*$. So let $\alpha \in \Ord^{\bar V}$ and let $X \in \dom(C(\alpha))$.

Successor case, $\alpha = \beta + 1$: Assume that $C(\beta)(Y) = \{Y\}$, for all $Y \in \dom(C(\beta))$, and assume that $\dom(C(\beta)) = R(\beta)$. So by definition of $C$, we have that $X \subset R(\beta)$, and 
\[ 
C(\alpha)(X) = \big\{ \cup \{C(\beta)(Y) \mid Y \in X\} \big\} = \big\{ \cup \{\{Y\} \mid Y \in X\} \big\} = \{X\}. 
\]
Moreover, by definition of $R$,
\[
R(\alpha) = \pow' \big( \{C(\beta)(Y) \mid Y \in R(\beta)\} \big) = \pow(\cup \{\{Y\} \mid Y \in R(\beta)\}) = \pow(R(\beta)),
\]
and it follows that $\dom(C(\alpha)) = R(\alpha)$.

Limit case: Assume that $C(\beta)(Y) = \{Y\}$, and that $\dom(C(\beta)) = R(\beta)$, for all $\beta < \alpha$ and $Y \in \dom(C(\beta))$. By definition of $C$ and $R$, we have 
\[ 
\dom(C(\alpha)) = \cup \{\dom(C(\beta)) \mid \beta < \alpha\} = \cup \{R(\beta) \mid \beta < \alpha\} = R(\alpha),
\]
where the sets $\{\dom(C(\beta)) \mid \beta < \alpha\}$ and $\{R(\beta) \mid \beta < \alpha\}$ are obtained from stratified comprehension and the existence of $R$ and $C$. Therefore,
\[
\langle X, Z \rangle \in C(\alpha) \Leftrightarrow \exists \beta < \alpha . C(\beta)(X) = Z \Leftrightarrow Z = \{X\}.
\]
So $C$ and $R$ turn out to be the desired functions.
\end{proof}

Thanks to this result, we have the class $\bar V_\alpha := R(\alpha)$, for each $\alpha \in \Ord^{\bar V}$. By induction, these are all transitive classes, so it is easily seen that they are hereditarily strongly cantorian. Since we obtained a function $\alpha \mapsto \bar V_\alpha$ on $\Ord^{\bar V}$, we can take the union $\bar{\bar V} := \cup \{\bar V_\alpha \mid \alpha \in \Ord^{\bar V}\}$. We can also take the union $\bar{\bar B} := \cup \{C(\alpha) \mid \alpha \in \Ord^{\bar V}\}$, to obtain a bijection witnessing that $\bar{\bar V}$ is strongly cantorian.

\begin{thm}\label{ConZC}
In $S^*$, $\bar{\bar V}$ is a model of ZC $+$ ``$\Ord^{\bar V}$ is an inaccessible cardinal'' $+$ ``replacement schema for sets of cardinality in $\Ord^{\bar V}$''.\footnote{The proof given, for the separation and the weakened replacement schemata, quantifies over formulas in the meta-theory. So what is proved is actually a theorem schema about $\bar{\bar V}$, where $\bar{\bar V}$ is considered externally as a submodel of a model of $S^*$. However, Roland Hinnion's development of the semantics of first-order logic in \cite{Hin75} applies to NFU, and provides a means to internalize this proof to $S^*$.}
\end{thm}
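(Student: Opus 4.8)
The plan is to treat $\bar{\bar V}$ as an internal cumulative hierarchy of height $\Ord^{\bar V}$ erected on top of the $\mathrm{ZFC}$-model $\bar V$, and to verify each axiom (schema) of the target theory in its relativization to $\bar{\bar V}$. Two structural facts drive everything. First, a \emph{powerset-completeness} of $\bar V$: by the axiom $\pow(s)\in\bar V$ together with ``Sets and Classes'', every subclass of a set $s\in\bar V$ is again an element of $\bar V$; hence for $\lambda\in\bar V$ the powerset $\pow^{\bar{\bar V}}(\lambda)$ computed in $\bar{\bar V}$ coincides with $\pow(\lambda)\in\bar V$, and cardinalities of members of $\bar V$ are computed identically in $\bar V$ and in $\bar{\bar V}$. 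Second, a \emph{full separation principle} for strongly cantorian domains: since each $\bar V_\alpha$ and $\bar{\bar V}$ is (hereditarily) strongly cantorian, the scan bijection lets us form $\{X\in A : \psi(X)\}$ for \emph{every} $\mathcal{L}^*$-formula $\psi$ (stratified or not, and with $\bar{\bar V}$ itself allowed as a parameter) whenever $A$ is a subclass of one of these. This is precisely the pay-off of having built the hierarchy to be strongly cantorian, and it is what compensates for the absence of unstratified comprehension in $S^*$.

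First I would record the routine preliminaries: by induction along the well-founded class $\Ord^{\bar V}$ each $\bar V_\alpha$ is transitive with $\bar V_\alpha\subseteq\bar V_\beta$ for $\alpha\le\beta$ and $X\in\bar V_{\beta+1}\Leftrightarrow X\subseteq\bar V_\beta$; consequently $\bar{\bar V}$ is transitive and carries a rank function $\rho:\bar{\bar V}\to\Ord^{\bar V}$. Because $\bar V\models\mathrm{ZFC}$, the class $\Ord^{\bar V}$ has no largest element and is closed under suprema of its sets, and it is a genuine ordinal of $\bar{\bar V}$ lying in $\bar V_1=\pow(\bar V)$. The ZC axioms then reduce to rank bookkeeping: Extensionality holds since $\bar{\bar V}$ is transitive and, by the powerset convention and $\bar V\models\mathrm{ZFC}$, $\varnothing$ is its only memberless element; Foundation is inherited from the strong Foundation schema of $S^*$ via the well-foundedness of $\rho$; Pairing, Union and Powerset follow from closure of $\Ord^{\bar V}$ under successor together with the level estimates $\{X,Y\}\in\bar V_{\max(\rho X,\rho Y)+1}$, $\bigcup X\in\bar V_{\rho X+1}$ and $\pow(X)\in\bar V_{\rho X+2}$; Infinity is witnessed by $\omega^{\bar V}\in\bar V\subseteq\bar{\bar V}$; Separation is exactly the strongly-cantorian full separation principle applied to a subset of the relevant $\bar V_\gamma$; and Choice comes from restricting the Universal Choice class $C$ to any family in $\bar{\bar V}$, the resulting choice set being rank-bounded and hence in $\bar{\bar V}$.

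The heart of the argument is the inaccessibility of $\kappa:=\Ord^{\bar V}$ in $\bar{\bar V}$, and the weak Replacement schema, both of which I would derive from the strong Replacement axiom of $S^*$. The decisive observation is that $S^*$'s Replacement permits an arbitrary $\mathcal{L}^*$-formula with class parameters, so any function $f\in\bar{\bar V}$ whose domain is a set $\lambda\in\bar V$ and whose values lie in $\Ord^{\bar V}\subseteq\bar V$ has, by taking $f$ itself as a class parameter, its range $\rng(f)$ realized as a \emph{set of} $\bar V$. For regularity I would argue by contradiction: a cofinal $f:\lambda\to\Ord^{\bar V}$ with $\lambda<\kappa$ yields $\rng(f)\in\bar V$, whence $\bigcup\rng(f)=\Ord^{\bar V}\in\bar V$, contradicting Burali--Forti in $\bar V$; the same move rules out a bijection of $\kappa$ with a smaller ordinal, giving that $\kappa$ is a regular cardinal. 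The strong-limit clause uses powerset-completeness instead: for $\lambda\in\bar V$ one has $\pow^{\bar{\bar V}}(\lambda)=\pow(\lambda)\in\bar V$ with $\bar V$-cardinality below $\Ord^{\bar V}$. Finally, weak Replacement in $\bar{\bar V}$ combines both tools: given a $\bar{\bar V}$-function $F$ on a domain $a$ with $|a|\in\Ord^{\bar V}$, I compose with an enumeration $e:\lambda\to a$ and apply $S^*$-Replacement to $\xi\mapsto\rho(F(e(\xi)))$, obtaining a set of ordinals of $\bar V$ bounded by some $\gamma<\kappa$; then $\rng(F)\subseteq\bar V_\gamma$ is formed by strongly-cantorian separation and lands in $\bar V_{\gamma+1}\subseteq\bar{\bar V}$.

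The main obstacle, and the point requiring the most care, is reconciling the demand for \emph{unstratified} separation and replacement inside $\bar{\bar V}$ with the fact that $S^*$ offers only \emph{stratified} comprehension. My plan meets this on two fronts: the strongly-cantorian full separation principle supplies the instances of comprehension that an unstratified relativized formula would otherwise require, while $S^*$'s Replacement --- because it already admits arbitrary formulas and class parameters --- supplies the genuinely unbounded existence assertions. A secondary subtlety, flagged in the footnote, is that verifying Separation and weak Replacement as stated quantifies over formulas of the meta-theory, so that what is proved is prima facie a theorem \emph{schema} about $\bar{\bar V}$ regarded externally; here I would invoke Hinnion's internalization of first-order satisfaction for NFU-structures to promote the schema to a single internal statement ``$\bar{\bar V}\models\mathrm{ZC}+\dots$'' provable in $S^*$.
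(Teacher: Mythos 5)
Your proposal is correct and follows essentially the same route as the paper's proof: rank bookkeeping in the hierarchy $\bar V_\alpha$ for the basic ZC axioms, the strong-cantorianicity (subversion) trick to obtain full separation from stratified comprehension, $S^*$'s unstratified Replacement applied to the rank-of-witness map to get regularity of $\Ord^{\bar V}$ and the weakened replacement schema, and Hinnion's internal satisfaction to address the schema-vs-single-statement issue. Your minor variations --- supertransitivity of $\bar V$ stated as an explicit ``powerset-completeness'' lemma, Burali--Forti phrasing in place of the paper's foundation argument, and bounding the image by a supremum $\gamma$ rather than the paper's least ordinal outside the image --- are cosmetic and, if anything, slightly cleaner.
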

\begin{proof}
$\Ord^{\bar V}$ is an inaccessible cardinal: It is clearly transitive and totally ordered by $\in$, and by foundation well-ordered by $\in$. Hence, it is an ordinal. Suppose towards a contradiction that it were not a cardinal. Then there would exist a bijection $f \in \bar V$ from $\kappa \in \Ord^{\bar V}$ to $\Ord^{\bar V}$. So by replacement, we would have $\Ord^{\bar V} \in \bar V$, which contradicts foundation. So $\Ord^{\bar V}$ is a cardinal. By the same argument (augmented with the powerset axiom), $2^\kappa$ does not stand in surjection to $\Ord^{\bar V}$, for any $\kappa \in \Ord^{\bar V}$. Lastly, suppose that there is $\kappa \in \Ord^{\bar V}$ and a function $f$ from $\kappa$ to $\Ord^{\bar V}$ that is unbounded. By replacement, its image $I$ is then an element of $\bar V$, and $\cap I = \Ord^{\bar V} \in \bar V$, a contradiction. We conclude that $\Ord^{\bar V}$ is an inaccessible cardinal.

Extensionality: None of the sets in $\bar{\bar V}$ contains any atom other than $\varnothing$.

Infinity: $\omega \in \bar V$.

Let $A, B \in \bar{\bar V}$. Then we may fix $\alpha + 2 \in \Ord^{\bar V}$, such that $A, B \in \bar V_{\alpha + 2}$. 

Union: Since $\bar V_{\alpha + 2} = \pow(\pow(\bar V_\alpha))$, we have $\cup A \subset \bar V_\alpha$, so $\cup A \in \bar V_{\alpha + 1}$.

Pair: $\{A, B \} \in \bar V_{\alpha + 3}$.

Powerset: Since $A \subset \bar V_{\alpha + 1}$, we have $\pow(A) \subset \pow(\bar V_{\alpha + 1})$, so $\pow(A) \in \bar V_{\alpha + 3}$.

Choice: Follows from global choice.

Foundation: Let $\beta$ be the least ordinal in $\Ord^{\bar V}$, such that there is $C \in \bar V_\beta \cap A$. For each $U \in C$, there is $\gamma < \beta$, such that $U \in \bar V_\gamma$. Hence, for each $U \in C$, we have that $U \not \in A$.

Separation schema: Let $\phi(X, Y)$ be a formula. We need to show that $\{X \in A \mid \phi^{\bar{\bar V}}(X, B)\}$ exists in $\bar{\bar V}$. Since $\bar{\bar V}$ and $B$ are strongly cantorian, the formula $X \in A \wedge \phi^{\bar{\bar V}}(X, B)$ is equivalent to a stratified formula. (This is done by using the bijections witnessing that the sets are strongly cantorian, to shift the types in the formula. For more details, see the Subversion theorem in \cite[§17.5]{Hol98}.) Hence, by stratified comprehension, we have that $\{X \in A \mid \phi^{\bar{\bar V}}(X, B)\}$ exists. Since it is a subset of $A$, it is a subset of $\bar V_{\alpha + 1}$, and therefore an element of $\bar V_{\alpha + 2}$.

Replacement schema for sets of cardinality in $\Ord^{\bar V}$: Suppose $|A| = \kappa \in \Ord^{\bar V}$. It suffices to show that if $\phi(\xi, Z, B)$ is a formula such that $[\forall \xi \in \kappa . \exists^! Z \phi(\xi, Z, B)]^{\bar{\bar V}}$, then there is $C \in \bar{\bar V}$ such that $[\forall Z ( Z \in C \leftrightarrow \exists \xi \in \kappa . \phi(\xi, Z, B))]^{\bar{\bar V}}$. Consider the formula $\psi(\xi, \zeta, B)$ defined as ``$\zeta$ is the least ordinal in $\Ord^{\bar V}$ such that there exists $Z \in \bar V_\zeta$ for which $\phi^{\bar{\bar V}}(\xi, Z, B)$''. By the strong replacement axiom schema of $S^*$, the image of $\kappa$ under $\psi$ exists in $\bar V$. Let $\lambda$ be the least ordinal in $\Ord^{\bar V}$, which is not in that image. Then we have $[\forall Z ( Z \in \bar V_\lambda \leftarrow \exists \xi \in \kappa . \phi(\xi, Z, B))]^{\bar{\bar V}}$. Now, by separation in $\bar{\bar V}$, we obtain $C := \{ Z \in \bar V_\lambda \mid [\exists \xi \in \kappa . \phi(\xi, Z, B)]^{\bar{\bar V}} \}$, as desired.
\end{proof}

We also extract the following general fact about ZC.

\begin{prop}
ZC $\vdash$ Replacement schema for functional formulas with bounded image.
\end{prop}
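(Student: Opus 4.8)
The plan is to observe that the bounded-image hypothesis reduces each instance of Replacement to a single instance of Separation, which is available in ZC. First I would make the statement precise. Call a formula $\phi(x, y, \vec{z})$ \emph{functional with bounded image} relative to a domain set $a$ if $\forall x \in a\, \exists! y\, \phi(x, y, \vec{z})$ and, in addition, there is a set $B$ such that $\forall x \in a\, \forall y\, (\phi(x, y, \vec{z}) \rightarrow y \in B)$. The goal is then to produce, for every such $a$, a set $b$ witnessing the conclusion of Replacement, namely $\forall y\, (y \in b \leftrightarrow \exists x \in a\, \phi(x, y, \vec{z}))$.

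The key step is to apply the Separation schema of ZC to the bounding set $B$, using the formula $\psi(y)$ given by $\exists x \in a\, \phi(x, y, \vec{z})$, with $a$ and $\vec{z}$ treated as parameters. This yields the set
$$b := \{ y \in B \mid \exists x \in a\, \phi(x, y, \vec{z}) \}.$$
It then remains only to verify the two directions of the biconditional. If $y \in b$, then by the defining condition $\exists x \in a\, \phi(x, y, \vec{z})$. Conversely, if $\exists x \in a\, \phi(x, y, \vec{z})$, then the bounded-image hypothesis forces $y \in B$, and hence $y \in b$. Thus $b$ is exactly the image required by Replacement.

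There is no substantive obstacle here; indeed the content of the proposition is precisely that ``bounded image'' supplies the one ingredient that plain Separation lacks, namely a set within which to separate, so that the full strength of Replacement is never invoked. I would emphasize in the write-up that functionality of $\phi$ plays no role whatsoever in the existence of $b$: the argument produces the collection $\{y \mid \exists x \in a\, \phi(x,y,\vec{z})\}$ as a set under the bounded-image hypothesis alone. Functionality is retained only because Replacement is by convention a schema about functional relations, and it would become relevant only if one further wished to recover a genuine function with domain $a$ and image $b$ (which one could likewise obtain by separating the appropriate subset of $a \times B$).
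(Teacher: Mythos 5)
Your proof is correct and takes essentially the same approach as the paper: both reduce each bounded-image instance of Replacement to a single application of Separation inside the bounding set, checking the two directions of the biconditional exactly as you do. The only cosmetic difference is that the paper formalizes the bound as a level $V_\alpha$ of the cumulative hierarchy (natural in its intended application inside $\bar{\bar{V}}$), whereas you allow an arbitrary bounding set $B$ --- a marginally more general phrasing --- and your side remark that functionality of $\phi$ plays no role in obtaining the image set is accurate.
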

\begin{proof}
Suppose $\alpha \in \Ord$ and $\phi(X, Z, B)$ is a formula such that $\forall X \in A . \exists^! Z \phi(X, Z, B)$ and $\forall Z ( Z \in V_\alpha \leftarrow \exists X \in A . \phi(X, Z, B))$. Then, by separation, we obtain $C := \{ Z \in V_\alpha \mid \exists X \in A . \phi(X, Z, B) \}$, as desired.
\end{proof}

We now suggest to interpret ``small'' as ``$\in \bar V$'' and to re-interpret ``large'' as ``$\in \bar{\bar V}$''. We then have quite a workable set theory for the large categories. The kind of operations on large categories, that would require more than what we have in $\bar{\bar V}$, are unusual in ordinary mathematics and category theory. For example, we cannot form the union $\cup \{ \bar V_\alpha \mid \alpha \in \Ord^{\bar V} \}$ in the model $\bar{\bar V}$. On the other hand, if for some unusual application we would need to form that union, then we can make use of the fact that $\bar{\bar V}$ is strongly cantorian, and apply the iterated  powerclass argument above to obtain $\bar{\bar V}_0, \bar{\bar V}_1, \dots, \bar{\bar V}_\alpha, \dots$, as well as their limit, the model $\bar{\bar{\bar V}}$, in which $\cup \{ \bar V_\alpha \mid \alpha \in \Ord^{\bar V} \}$ is obtained. Of course, we can also iterate that argument. We conclude that $S^*$ is a level (2) solution to (R3), and thereby to all of (R).

We end this section with two remarks. Firstly, so far we only used the strong replacement axiom schema of $S^*$ in the proof of Theorem \ref{ConZC}, to show that $\bar{\bar V}$ is a model of ``the replacement schema for sets of cardinality in $\Ord^{\bar V}$''. Another benefit of the strong replacement schema for the foundations of category theory, is that the small-definability concerns encountered in ZFC/S disappear: Working in $S^*$, strong replacement immediately yields the following: If $f$ is a function and $A \in \bar V$ is a subset of $\dom(f)$, such that $f(A) \subset \bar V$, then $f(A) \in \bar V$. So for example, if $\mathbf{C}$ is a category, $A$ is a small subset of its set of objects, and for all $X, Y \in A$, the set $\mathbf{C}(X, Y)$ of all $\mathbf{C}$-morphisms from $X$ to $Y$ is small, then the natural restriction of $\mathbf{C}$ to $A$ is small. Without strong replacement, this argument requires the additional small-definability assumption that the natural restriction of $\mathbf{C}$ to $\bar V$ is definable over $\bar V$, which is not necessarily the case.

Secondly, on this interpretation of small and large, the categories enabling us to satisfy (R1) and (R2), are neither large nor small. The notions of small and large have come to be associated with various results of category theory, that do not necessarily hold for those categories. We will look at one example of this phenomenon shortly, but first let us consider the theory NFUA as a foundation of category theory.

\subsection{NFUA as a solution to both (R) and (S)}

NFUA is defined as NFU + ``every cantorian set is strongly cantorian''. NFUA satisfies (R1) and (R2) simply because NFU does. In an unpublished proof, Robert Solovay showed in 1995 that NFUA is equiconsistent with ZFC + the schema ``for each $n \in \mathbb{N}$, there is an $n$-Mahlo cardinal''. A cardinal $\kappa$ is (defined as) $0$-Mahlo, if the set of inaccessible cardinals below $\kappa$ is stationary below $\kappa$. And $\kappa$ is $n+1$-Mahlo, if the set of $n$-Mahlo cardinals below $\kappa$ is stationary below $\kappa$, where $n \in \mathbb{N}$. A set $S \subset \kappa$ is (defined as) stationary below $\kappa$, if $S$ intersects every subset of $\kappa$ that is unbounded and closed under suprema below $\kappa$. The proof of a refinement of Solovay's equiconsistency result is presented in \cite{Ena04}.

For each $n \in \mathbb{N}$, the theory ZFC + ``there is an $n$-Mahlo cardinal'' is interpretable in NFUA by means of the set of equivalence classes of pointed extensional well-founded structures. This technique, invented by Roland Hinnion \cite{Hin75}, is often used to obtain lower bounds on the consistency strength of variants of NFU and NF. We may of course solve (S) within this interpretation, for example through the Grothendieck approach of subsection \ref{ApproachesInaccessibles}. In the terminology of subsection \ref{FormulationsRandS}, that approach yields a level (3) solution to (R3) and (S1). The proof of the converse direction of the equiconsistency result shows that we can add a relatively consistent axiom to NFUA that provides us with a level (2) implementation. We will briefly explain how this works for a certain refined solution to (S), that is similarly motivated as ZFC/S.

Just below ZFC + $\exists \kappa$ ``$\kappa$ is a Mahlo cardinal'', in consistency strength, we have the theory ZMC defined roughly as ZFC + ``$|V|$ is a Mahlo-cardinal''. Of course, we cannot state the cardinality of $V$ since it is not a set, but we can add the schema ``for each formula $\phi(\xi)$, such that the class of ordinals satisfying $\phi$ is unbounded and closed under suprema, there is an inaccessible cardinal $\lambda$ satisfying $\phi$''. This schema essentially says that $|V|$ is a Mahlo cardinal.

As explained by Shulman \cite{Shu08}, we can form ZMC/S, in analogy with Feferman's ZFC/S. The same proof as with ZFC/S, gives us that ZMC/S is a conservative extension of ZMC. It turns out that ZMC/S is equivalent to ZFC/S + ``$|\mathbb{S}|$ is inaccessible''. This allows us to obtain a stronger replacement axiom schema in relation to $\mathbb{S}$: If $A \in \mathbb{S}$ and $\phi(x, y, u)$ is a formula with $u$ an arbitrary parameter (i.e. not necessarily an element of $\mathbb{S}$), such that $\forall x \in A . \exists^! y \in \mathbb S . \phi(x, y, u)$, then there is $B \in \mathbb{S}$, such that $\forall y . (y \in B \leftrightarrow \exists x \in A . \phi(x, y, u))$. This means that we do not need to worry about small-definability. Hence, ZMC/S is quite a convenient solution to (S).

To interpret ZMC/S in NFUA, directly with the element relation of the language, restricted to a set, we may proceed as follows. In \cite{Ena04} a model of NFUA is constructed from a countable model of the theory $T = $ ZFC + the schema ``for each $n \in \mathbb{N}$, there is an $n$-Mahlo cardinal''. Performing the ``over S'' transformation to $T$ (just as from ZFC to ZFC/S in subsection \ref{FefermanZFCoverS}), gives us the theory $T$/S. As in the proof of Theorem \ref{ConZFCoverS}, Con$(T) \Rightarrow$ Con$(T$/S$)$. So we obtain a countable model of $T$/S. Then, since $T \subset T$/S, the construction of the model $\mathcal M$ of NFUA goes through as before. In $\mathcal M$, ZMC/S is directly interpretable by restricting the element relation of the language to a set, yielding a level (2) solution to (R3) and (S1).

NFUA solves (R) and (S) somewhat separately: (S) may be solved within the interpretation of ZMC/S in NFUA (this also yields (R3)), while (R1) and (R2) are solved by means of categories (typically built from the universal set $V$) that do not fit within the interpretation of ZMC/S. For someone who is only interested in foundations solving (S), it would obviously be more economical to work in ZMC/S than in NFUA. So it is natural to wonder: Is there a benefit for category theory in enabling the existence of categories yielding (R1) and (R2), apart from the intuitive appeal of obtaining e.g. the category of all groups without any small/large restrictions? It seems that little research has been directly aimed at answering this question. In the next subsection we will just scratch the surface, by expressing and proving a basic fact of stratified set theory in category theoretic form. This result contrast with Theorem \ref{FreydLimits}, showing that proper (R1)-categories can, in a sense, be more complete than what is possible in a purely ZFC-based setting.

\subsection{The limits of a proper category (in NFU)}

We already saw that in ZFC-based foundations, a category $\mathbf{C}$ which is not a preorder, can only have products indexed by a set of lower cardinality than the cardinality of the set of morphisms of $\mathbf{C}$. The proof in the ZFC setting uses Cantor's theorem, which holds for cantorian sets in NFU. But since $V \supset \pow(V) $, the universe $V$ is a counterexample to Cantor's theorem, and the categories yielding (R1) will typically have sets of objects and morphisms derived from $V$. Cantor's theorem has the following type-shifted version in NFU, essentially proved the same way.

\begin{thm}
$\mathrm{NFU} \vdash \forall x . \big|\big\{\{y\} \mid y \in x\big\}\big| < |\pow(x)|$.
\end{thm}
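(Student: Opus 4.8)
The plan is to establish the strict inequality in the standard two-part fashion: exhibit an injection from $\{\{y\} \mid y \in x\}$ into $\pow(x)$ to obtain ``$\leq$'', and then run a type-respecting Cantor diagonal argument to rule out a bijection, giving ``$\neq$'' and hence ``$<$''. The whole subtlety is that the naive Cantor maps must be replaced by ones whose graphs are cut out by \emph{stratified} formulas, so that stratified comprehension actually produces the sets we need; this is precisely why the type-shifted singletons $\{\{y\} \mid y \in x\}$ appear in place of $x$ itself.

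For the injection, I would observe that whenever $y \in x$ the singleton $\{y\}$ is a subset of $x$, hence an element of $\pow(x)$; thus $\{\{y\} \mid y \in x\} \subseteq \pow(x)$, and the inclusion (equivalently, the identity restricted to this set of singletons) is injective. Its graph $\{\langle z, z \rangle \mid z \in \{\{y\} \mid y \in x\}\}$ is type-level and stratified, so it exists as a set, yielding $|\{\{y\} \mid y \in x\}| \leq |\pow(x)|$. Note that the ``obvious'' untyped map $y \mapsto \{y\}$ from $x$ into $\pow(x)$ raises type and is therefore \emph{not} available as a set in NFU --- which is exactly why one does not expect the plain Cantor inequality $|x| < |\pow(x)|$, and indeed it fails for $x = V$ since $\pow(V) \subseteq V$.

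For the impossibility of equality, I assume towards a contradiction that $f$ is a bijection from $\{\{y\} \mid y \in x\}$ onto $\pow(x)$, and form the diagonal set
\[ D = \{ y \in x \mid y \notin f(\{y\}) \}. \]
The crucial verification is that this defining condition is stratified: assigning $y$ type $0$ forces $x$ and $\{y\}$ to type $1$, forces the value $f(\{y\})$ (a subset of $x$) to type $1$, and then $y \in f(\{y\})$ is well-typed, while $f$ sits at a fixed higher type as a parameter. Hence $D$ exists by stratified comprehension, and since $D \subseteq x$ we have $D \in \pow(x)$. By surjectivity of $f$ there is some $\{d\} \in \{\{y\} \mid y \in x\}$ with $d \in x$ and $f(\{d\}) = D$, and evaluating membership gives $d \in D \iff (d \in x \wedge d \notin f(\{d\})) \iff d \notin D$, a contradiction.

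Combining the two parts yields $|\{\{y\} \mid y \in x\}| \leq |\pow(x)|$ together with the impossibility of a bijection, i.e. the desired strict inequality. The main obstacle throughout is the type bookkeeping in the diagonal construction: the argument goes through precisely because working with the type-raised copy $\{\{y\} \mid y \in x\}$ makes both the inclusion and the diagonal formula stratified, whereas the corresponding assertion phrased directly about $x$ and $\pow(x)$ is not even an instance of stratified comprehension.
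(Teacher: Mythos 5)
Your proof is correct and is exactly the argument the paper intends: the paper only remarks that the type-shifted version is ``essentially proved the same way'' as Cantor's theorem, and your injection via the inclusion $\{\{y\} \mid y \in x\} \subseteq \pow(x)$ together with the stratified diagonal set $D = \{y \in x \mid y \notin f(\{y\})\}$ is precisely that standard adaptation, with the type bookkeeping (type-level pairing making $y \notin f(\{y\})$ stratifiable with $f$ as a parameter) carried out correctly. Your aside that $y \mapsto \{y\}$ is not a set in NFU, which is why the plain Cantor inequality fails for $V$, also matches the paper's surrounding discussion.
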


Again observing that $\pow(V) \subset V$, we now see that $\big|\big\{\{y\} \mid y \in V\big\}\big| < |V|$. As an example of how categories in NFU satisfying (R1) can behave quite differently from their counterparts in ZFC, we will now show that the category $\mathbf{Rel}$, with all sets as objects and binary relations as morphisms, has products and coproducts indexed by the set of all singletons. Similarly, the category $\mathbf{Set}$ has coproducts indexed by the set of all singletons. Compare this with the fact (a consequence of Theorem \ref{FreydLimits}) that the corresponding locally small categories in a ZFC-based setting, do not have such limits indexed by the set/class of all small singletons. Note that in order to accommodate the weaker extensionality axiom of NFU, we stipulate that neither an object nor a morphism of $\mathbf{Rel}$ or $\mathbf{Set}$ may be an atom other than $\varnothing$.

Before getting into this proof, we would like to take the opportunity to point out that Thomas Forster, Adam Lewicki and Alice Vidrine have done unpublished work on the category theory of sets in stratifiable set theories at Cambridge University. This work establishes several category theoretic properties of NF: For example, the category $\mathbf{Set}$ in NF is exhibited as an ``almost topos'' and as a category of classes. Moreover, the status of the Yoneda lemma internal to NF is clarified. Being a category of classes entails, among other properties, having finite limits and finite coproducts. In the present paper we are simply being explicit about how large the index category can be; there is no technical novelty in the proof. The point is to show possibilities in NFU-based category theory that are not available in purely ZFC-based approaches.

\begin{prop} \label{RelProducts}
$\mathrm{NFU} \vdash \mathbf{Rel}$ has products indexed by $\big\{\{i\} \mid i \in V \big\}$.
\end{prop}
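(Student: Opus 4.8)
The plan is to realize the product as a disjoint union, exactly as one does for $\mathbf{Rel}$ in $\mathrm{ZFC}$ (where the disjoint union is simultaneously the product and the coproduct); the only thing that needs care is that each construction be given by a \emph{stratified} formula, so that Stratified Comprehension applies and the resulting object is a genuine set of $\mathrm{NFU}$. The reason the construction succeeds here, in contrast to the $\mathrm{ZFC}$ setting governed by Theorem~\ref{FreydLimits}, is that the index collection $\{\{i\} \mid i \in V\}$ is itself a set, so a disjoint union ranging over it can also be a set rather than being stranded as a proper class.

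Since the index category is discrete, a diagram $\mathbf{F}$ amounts to a family of objects $A_i := \mathbf{F}(\{i\})$ for $i \in V$, presented by the function $\{i\} \mapsto A_i$. Using the type-level pairing $P$, I would take as the apex the disjoint union
\[ U := \{\, P(i,a) \mid i \in V \wedge a \in A_i \,\}, \]
tagging each $a \in A_i$ by its index $i$, and take as projection $u_{\{i\}} : U \to A_i$ the relation $\{\, P(P(i,a),a) \mid a \in A_i \,\}$. To see $U$ and the family $\{i\} \mapsto u_{\{i\}}$ are sets, I would assign type $0$ to $i$ and to $a$, type $1$ to $\{i\}$ and to $A_i$, and type $2$ to $\mathbf{F}$; then the defining condition for $U$, namely ``$\exists i \, \exists a \, \exists A\,(w = P(i,a) \wedge P(\{i\},A) \in \mathbf{F} \wedge a \in A)$'', and the analogous condition defining $\{i\}\mapsto u_{\{i\}}$, are stratified. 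Because $\mathbf{J}$ has only identity morphisms, the cone coherence condition is vacuous, so $(U, \{u_{\{i\}}\})$ is automatically a cone.

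For the universal property, given any cone $(N, \{n_{\{i\}}\})$ I would define the candidate universal morphism $v : N \to U$ by
\[ v := \{\, P(x, P(i,a)) \mid x \in N \wedge i \in V \wedge P(x,a) \in n_{\{i\}} \,\}, \]
that is, $v = \bigcup_i \iota_i \circ n_{\{i\}}$ where $\iota_i : a \mapsto P(i,a)$; a routine type count (with elements of $N$ placed at type $0$) again shows this is stratified. Injectivity of $P$ (the Pairing axiom) is what lets the tag $i$ be recovered from an element $P(i,a)$, and it drives both halves of the verification: computing the relational composite $u_{\{i\}} \circ v$ collapses the tag and returns exactly $n_{\{i\}}$, giving the cone equations; and if $v'$ is any morphism with $u_{\{i\}} \circ v' = n_{\{i\}}$ for all $i$, then reading off the tag of each element of $U$ shows $v' \subseteq v$ and $v \subseteq v'$, hence $v' = v$. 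This establishes existence and uniqueness of the universal morphism, so $(U,\{u_{\{i\}}\})$ is a product.

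The only genuine obstacle is the stratification bookkeeping just indicated: everything reduces to the familiar $\mathbf{Rel}$ biproduct calculation, but one must confirm that tagging by $i$ via the type-level pair $P(i,a)$ keeps $U$, the projections, and the universal morphism all at their expected types. This is precisely where the type-level nature of $P$ is essential --- it allows an element $a \in A_i$ and its tag $i$ to sit at the same type, so that the entire family can be gathered into a single set by Stratified Comprehension, which is exactly the step that has no $\mathrm{ZFC}$ analogue for the collection of all small singletons.
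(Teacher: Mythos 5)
Your proposal is correct and takes essentially the same route as the paper: the paper also realizes the product as the tagged disjoint union $\{\langle x,i\rangle \mid x \in \mathbf{F}(\{i\})\}$ with the converse-inclusion projections, defines the mediating morphism as the union of the tagged cone relations, and relies on the same stratification bookkeeping (including the observation that the maps $\{i\} \mapsto \pi_{\{i\}}$ and $\{i\} \mapsto R_{\{i\}}$ are given by stratified formulas, hence realized as sets). Your explicit verification of the uniqueness of the mediating morphism via tag-recovery, and your emphasis on the type-level pairing being what keeps the formulas stratified, are both sound and in fact slightly more complete than the paper's write-up, which leaves uniqueness implicit.
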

\begin{proof}
Let $\mathbf{Sing}$ be the category with $\big\{\{i\} \mid i \in V \big\}$ as its set of objects and only identity morphisms, and let $\mathbf{F} : \mathbf{Sing} \rightarrow \mathbf{Rel}$ be a diagram. Set 
\[P := \big\{ \langle x, i \rangle \mid x \in \mathbf{F}(\{i\}) \big\},\] 
and define $\pi_{\{i\}} : P \rightarrow _\mathbf{Rel} \mathbf{F}(\{i\})$ by
\[\pi_{\{i\}} := \{\langle \langle x, i \rangle, x \rangle \mid x \in \mathbf{F}(\{i\}) \},\] 
for each $\{i\} \in \mathbf{Sing}$. Note that the definition of the map $\{i\} \mapsto \pi_{\{i\}}$ is stratified, and therefore realized by a function.

Consider an arbitrary cone to $\mathbf{F}$ in $\mathbf{Rel}$, i.e. a set $A$ and a relation $R_{\{i\}} : A \rightarrow_\mathbf{Rel} \mathbf{F}(\{i\})$, for each $\{i\} \in \mathbf{Sing}$. Again, the map-definition $\{i\} \mapsto R_{\{i\}}$ is stratified and therefore realized by a function. We may now define $u : A \rightarrow P$ by 
\[u := \{\langle a, \langle x, i \rangle \rangle \mid a R_{\{i\}} x\}.\] 
We just need to check that for an arbitrary $\{i\} \in \mathbf{Sing}$, we have $R_{\{i\}} = \pi_{\{i\}} \circ u$, i.e. that
\begin{center}
\includegraphics{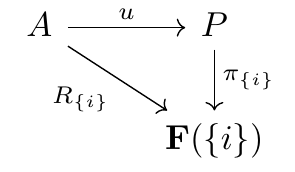}
\end{center}
commutes. So suppose $a \in A$ and $x \in \mathbf{F}(\{i\})$. Then, 
\[ a (\pi_{\{i\}} \circ u) x \iff \exists p \in P . (a u p \wedge p \pi_{\{i\}} x) \iff a u \langle x, i \rangle \iff a R_{\{i\}} x .\]
Since a morphism of $\mathbf{Rel}$ cannot be any other atom than $\varnothing$, we have by extensionality that $R_{\{i\}} = \pi_{\{i\}} \circ u$.
\end{proof}

For any relation $R : A \rightarrow B$, we have a converse relation $R^\dag : B \rightarrow A$ defined by $x R y \Leftrightarrow y R^\dag x$. ($\mathbf{Rel}$ is a dagger symmetric monoidal category, hence the notation.) Clearly, $(Q \circ R)^\dag = R^\dag \circ Q^\dag$, for any morphisms $Q, R$ in $\mathbf{Rel}$. Thus, by replacing $u$, the $\pi_{\{i\}}$ and the $R_{\{i\}}$ by their converses, in the proof above, we see:

\begin{prop}\label{RelCoproducts}
$\mathrm{NFU} \vdash \mathbf{Rel}$ has coproducts indexed by $\big\{\{i\} \mid i \in V \big\}$.
\end{prop}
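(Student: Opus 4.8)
The plan is to exploit the self-duality of $\mathbf{Rel}$ furnished by the converse (dagger) operation, rather than re-running the universality computation from scratch. The key structural observation is that $R \mapsto R^\dag$ is an involutive, identity-on-objects functor from $\mathbf{Rel}$ to $\mathbf{Rel}^{op}$: it fixes every object, it is its own inverse since $(R^\dag)^\dag = R$, it preserves identities since $\id^\dag = \id$, and by the stated fact $(Q \circ R)^\dag = R^\dag \circ Q^\dag$ it reverses composition. Hence the dagger is an isomorphism of categories $\mathbf{Rel} \cong \mathbf{Rel}^{op}$ whose object part is the identity on $\big\{\{i\} \mid i \in V\big\}$. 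Since coproducts in $\mathbf{Rel}$ are exactly products in $\mathbf{Rel}^{op}$, and this isomorphism does not disturb the index set, Proposition \ref{RelProducts} transfers across the dagger to yield coproducts indexed by $\big\{\{i\} \mid i \in V\big\}$.

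Concretely, I would reuse the object $P = \big\{\langle x, i\rangle \mid x \in \mathbf{F}(\{i\})\big\}$ from the proof of Proposition \ref{RelProducts}, now equipped with coprojections $\iota_{\{i\}} := \pi_{\{i\}}^\dag = \big\{\langle x, \langle x, i\rangle\rangle \mid x \in \mathbf{F}(\{i\})\big\}$. Given any cocone, i.e. a set $A$ together with a stratified family of relations $R_{\{i\}} : \mathbf{F}(\{i\}) \rightarrow_\mathbf{Rel} A$, I would apply the product construction to the converses $R_{\{i\}}^\dag : A \rightarrow_\mathbf{Rel} \mathbf{F}(\{i\})$, obtaining the mediating map $u'$ of that proof, and set $u := (u')^\dag : P \rightarrow_\mathbf{Rel} A$. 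Daggering the commuting equation $R_{\{i\}}^\dag = \pi_{\{i\}} \circ u'$ established there, and using contravariance of the dagger, yields $R_{\{i\}} = u \circ \iota_{\{i\}}$, which is precisely the required cocone condition. Uniqueness of $u$ follows because the dagger is a bijection between $\mathbf{Rel}(P, A)$ and $\mathbf{Rel}(A, P)$, so it transports the uniqueness already proved for products.

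The one point demanding care in the NFU setting --- and I expect it to be the only real obstacle, though a minor one --- is to confirm that every construction remains realized by a function via stratified comprehension, exactly as in the product proof. This hinges on the dagger being a stratified operation: passing from $R$ to $R^\dag$ merely replaces each pair $\langle x, y\rangle$ by $\langle y, x\rangle$, which preserves the type assigned under the type-level pairing, so $R$ and $R^\dag$ receive the same type. Consequently the family $\{i\} \mapsto R_{\{i\}}^\dag$ is stratified (being the composite of the stratified assignment $\{i\} \mapsto R_{\{i\}}$ with the stratified dagger), and likewise for $\{i\} \mapsto \iota_{\{i\}}$; thus all the relevant maps exist as sets. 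The weakened-extensionality caveat, that a morphism of $\mathbf{Rel}$ cannot be an atom other than $\varnothing$, is discharged precisely as in the proof of Proposition \ref{RelProducts}.
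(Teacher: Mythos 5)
Your proposal is correct and is essentially the paper's own argument: the paper likewise obtains Proposition \ref{RelCoproducts} by replacing $u$, the $\pi_{\{i\}}$ and the $R_{\{i\}}$ in the proof of Proposition \ref{RelProducts} by their converses, using $(Q \circ R)^\dag = R^\dag \circ Q^\dag$. Your explicit framing via the isomorphism $\mathbf{Rel} \cong \mathbf{Rel}^{\mathrm{op}}$ and the check that the dagger is stratified merely spell out details the paper leaves implicit.
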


Note that the $\pi_{\{i\}}^\dag$ are functions. Moreover, if we assume that the $R_{\{i\}}^\dag$ are functions, then it follows that $u^\dag$ is a function. Therefore:

\begin{prop}\label{SetCoproducts}
$\mathrm{NFU} \vdash \mathbf{Set}$ has coproducts indexed by $\big\{\{i\} \mid i \in V \big\}$.
\end{prop}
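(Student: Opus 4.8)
The plan is to piggyback on the coproduct construction for $\mathbf{Rel}$ established in Proposition \ref{RelCoproducts}, exploiting the fact that $\mathbf{Set}$ is the wide subcategory of $\mathbf{Rel}$ obtained by keeping all objects but retaining only the functional relations as morphisms. The coproduct object is the very same $P = \{\langle x, i\rangle \mid x \in \mathbf{F}(\{i\})\}$, and the coproduct injections are the relations $\pi_{\{i\}}^\dag$, which (as already noted) are functions and hence bona fide morphisms of $\mathbf{Set}$. So the coproduct cocone of $\mathbf{Rel}$ already lives entirely inside $\mathbf{Set}$.

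The key verification is that this cocone remains universal when we restrict attention to $\mathbf{Set}$. Given an arbitrary $\mathbf{Set}$-cocone to $\mathbf{F}$, i.e.\ a set $A$ together with functions $R_{\{i\}}^\dag : \mathbf{F}(\{i\}) \rightarrow A$, I would invoke the universal property of $P$ in $\mathbf{Rel}$ to obtain the unique $\mathbf{Rel}$-morphism $u^\dag : P \rightarrow A$ satisfying $R_{\{i\}}^\dag = u^\dag \circ \pi_{\{i\}}^\dag$ for every $\{i\}$. The point, already signalled in the remark preceding the statement, is that when the legs $R_{\{i\}}^\dag$ are functions, $u^\dag$ is a function as well: unwinding $u^\dag = \{\langle \langle x, i\rangle, a\rangle \mid x R_{\{i\}}^\dag a\}$, each $\langle x, i\rangle \in P$ has exactly one image, namely $R_{\{i\}}^\dag(x)$, because $i$ is recovered from the pair and $R_{\{i\}}^\dag$ is single-valued and total on $\mathbf{F}(\{i\})$. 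Thus $u^\dag$ is a legitimate $\mathbf{Set}$-morphism, giving existence, and the commutativity equation is inherited verbatim from the $\mathbf{Rel}$ computation in Proposition \ref{RelProducts}.

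For uniqueness one observes that any $\mathbf{Set}$-morphism mediating the cocone is in particular a $\mathbf{Rel}$-morphism doing so; hence it must coincide with $u^\dag$ by the uniqueness clause already available in $\mathbf{Rel}$. No new stratification issue arises, since the map $\{i\} \mapsto R_{\{i\}}^\dag$ is stratified for the same reason as in Proposition \ref{RelProducts}, and extensionality (restricted to non-atoms, with $\varnothing$ as the designated atom) is applied exactly as before.

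The main---indeed essentially the only---obstacle is the passage from universality in $\mathbf{Rel}$ to universality in the subcategory $\mathbf{Set}$: coproducts in an ambient category need not descend to a subcategory, so one genuinely has to check (i) that the coproduct object and its injections are functional and (ii) that the mediating $\mathbf{Rel}$-morphism happens to be functional whenever the cocone legs are. Both checks are short, and (ii) is exactly the computation indicated in the preceding remark; uniqueness then transfers for free precisely because $\mathbf{Set}$ sits inside $\mathbf{Rel}$.
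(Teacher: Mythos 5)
Your proposal is correct and follows essentially the same route as the paper: the paper likewise obtains Proposition \ref{SetCoproducts} by observing that the $\mathbf{Rel}$-coproduct cocone of Proposition \ref{RelCoproducts} has functional injections $\pi_{\{i\}}^\dag$, and that the mediating morphism $u^\dag$ is a function whenever the cocone legs $R_{\{i\}}^\dag$ are, with uniqueness inherited from $\mathbf{Rel}$. Your write-up merely makes explicit the totality and single-valuedness check for $u^\dag$ and the subcategory-transfer of uniqueness, which the paper leaves as an immediate remark.
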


Since we did not use choice, these results hold in $\mathrm{NFU} \setminus \{\mathrm{AC}\}$ and its extension NF (with full extensionality) as well. 

As explained above, the proof given for Theorem \ref{FreydLimits} in ZFC, is blocked in NFU. But since $\big|\big\{\{y\} \mid y \in V\big\}\big| < |V|$, these propositions are not counterexamples to Theorem \ref{FreydLimits} in NFU. So the question whether NFU proves Theorem \ref{FreydLimits} or not, remains open, at least modulo the present paper. The point of Propositions \ref{RelProducts} -- \ref{SetCoproducts} is that we have positive category theoretic results in NFU (assuming that limits are generally desirable), whose obvious translations into ZFC-based foundations are falsified by Theorem \ref{FreydLimits}. We take this to be an indication that the (R1)-categories in NFU-based foundations may be of value to category theory, thus strengthening the motivation for exploring category theory internal to NF and NFU, as well as extensions like $S^*$ and NFUA. 

\section{The consistency of $S^*$}


In this section we present Feferman's consistency proof of his system $S^*$. By carefully keeping track of the cardinals used to build a model of $S^*$ we are able to improve on the result reported in \cite{Fef74, Fef06} by showing that $\mathrm{ZFC}+ \exists \kappa \textrm{``$\kappa$ is an inaccessible cardinal''}$ proves the consistency of $S^*$. 

We will make reference to two subsystems of ZFC studied in \cite{Mat01}. Mac Lane Set Theory ($\mathrm{Mac}$) is the subsystem of ZFC axiomatised by: extensionality, pair, emptyset, union, powerset, infinity, $\Delta_0$-separation, transitive containment, regularity and the axiom of choice. The set theory Kripke-Platek with Ranks ($\mathrm{KPR}$) is obtained from $\mathrm{Mac}$ by deleting the axiom of choice and adding $\Delta_0$-collection, $\Pi_1$-foundation and an axiom asserting that for every ordinal $\alpha$, the set $V_\alpha$ exists \footnote{Mathias omits powerset and transitive containment from his axiomatisation of $\mathrm{KPR}$ in \cite{Mat01}, but both of these axioms follow from the existence of $V_\alpha$ for every ordinal $\alpha$.}. If $\mathcal{L}^\prime$ is an extension of the language of set theory and $\mathcal{M}$ is an $\mathcal{L}^\prime$-structure then for $a \in M$ we write $a^*$ for the class $\{x \in M \mid \mathcal{M} \models (x \in a)\}$.   

Throughout this section we work in the theory $\mathrm{ZFC}+\exists \kappa$ ``$\kappa$ is an inaccessible cardinal''. Let $\kappa$ be an inaccessible cardinal. Let $A= V_\kappa \cup \{V_\kappa\}$. Jensen's consistency proof of NFU \cite{Jen69} reveals that a model of NFU can be built from model $\mathcal{M}$ of Mac that admits a non-trivial automorphism $j$ and such that there exists a point $c \in M$ with
\begin{equation} \label{eq:AutConditionForNFU}
\mathcal{M} \models (c \cup \mathcal{P}(c) \subseteq j(c)).
\end{equation}
In \cite{Fef74} Feferman obtains a model of $S^*$ (in the theory $\mathrm{ZFC} + \exists \kappa \exists \lambda$ ``$\kappa < \lambda$ are inaccessible cardinals'') by building a model $\mathcal{M}$ of ZFC that admits a non-trivial automorphism (the existence of the rank function $\alpha \mapsto V_\alpha$ in ZFC ensures that every non-trivial automorphism satisfies (\ref{eq:AutConditionForNFU})) and such that $\mathcal{M}$ is an end-extension of $A$. This ensures that there is an isomorphic copy $A$ in the well-founded part of the resulting model $\mathcal{N}$ of NFU, and moreover every point in this isomorphic copy of $A$ is strongly Cantorian in $\mathcal{N}$. This allows Feferman to interpret the constant symbol $\bar V$ using the point corresponding to $V_\kappa$ in $\mathcal{N}$. The fact that $\bar V$ is isomorphic to $V_\kappa$ where $\kappa$ is an inaccessible cardinal and $\mathcal{N}$ exists as a set ensures that $\bar V$ satisfies axioms (5--9) of $S^*$.

Feferman builds a model of ZFC that end-extends $A$ and admits a non-trivial automorphism using tools from infinitary logic. The fine-tuned proof we present here uses the same techniques as \cite{Fef74}, however the model $\mathcal{M}$ that we build, which is an end-extension of $A$ and admits a non-trivial automorphism, will satisfy a fragment of ZFC that is sufficient to ensure that $\mathcal{M}$ equipped with its non-trivial automorphism still yields a model of NFU. Let $\mathcal{L}^A$ be the extension of the language of set theory obtained by adding new constant symbols $\hat{a}$ for every $a \in A$. We use $\mathcal{L}_{\infty \omega}^A$ to denote the infinitary language obtained from $\mathcal{L}^A$ that permits arbitrarily long conjunctions and disjunctions, but only finite blocks of quantifiers. Let $T_A$ be the $\mathcal{L}_{\infty \omega}^A$-theory with axioms:
$$\forall x \left(x \in \hat{a} \iff \bigvee_{b \in a} (x=\hat{b})\right) \textrm{ for each } a \in A.$$
Note that $T_A$ asserts that an $\mathcal{L}^A$-structure is an end-extension of $A$. Let $\mathcal{L}_A$ be the least Skolem fragment of $\mathcal{L}_{\infty \omega}^A$ that contains $T_A$. So, in addition to the symbols of $\mathcal{L}^A$, $\mathcal{L}_A$ contains an $n$-ary function symbol $F_{\exists x \phi}$ for each $n$-ary $\mathcal{L}_A$-formula $\exists x \phi(x, y_1, \ldots, y_n)$. We write $\mathrm{Sk}_A$ for the $\mathcal{L}_A$-theory with axioms:
$$\forall \vec{y}(\exists x \phi(x, \vec{y}) \Rightarrow \phi(F_{\exists x \phi}(\vec{y}), \vec{y})) \textrm{ for each } \mathcal{L}_A \textrm{-formula } \phi(x, \vec{y}).$$
Let $\mathrm{Fm}_A$ be the set of $\mathcal{L}_A$-formulae. Note that $|\mathrm{Fm}_A|= \kappa$ and so there $2^\kappa$ many $\mathcal{L}_A$-theories.

We obtain a model that admits a non-trivial automorphism from a model equipped with an infinite class of order indiscernibles.

\begin{dfn}
Let $\mathcal{M}$ be an $\mathcal{L}_A$-structure. We say that a linear order $\langle I, < \rangle$ with $I \subseteq M$ is a set of $n$-variable indiscernibles for $\mathcal{M}$ if for all $\mathcal{L}_A$-formulae $\phi(x_1, \ldots, x_n)$ and for all $a_1 < \cdots < a_n$ and $b_1 < \cdots < b_n$ in $I$,
$$\mathcal{M} \models \phi(a_1, \ldots, a_n) \textrm{ if and only if } \mathcal{M} \models \phi(b_1, \ldots, b_n).$$
If for all $n \in \omega$, $\langle I, < \rangle$ with $I \subseteq M$ is a set of $n$-variable indiscernibles for $\mathcal{M}$ then we say that $\langle I, < \rangle$ is a set of indiscernibles for $\mathcal{M}$.
\end{dfn}

The following result from \cite{BK71} allows us to build $\mathcal{L}_A$-structures with indiscernibles:

\begin{lemma} \label{Th:InfinitaryLogicLemma}
(Barwise-Kunen) Suppose that for all $n \in \omega$, $\mathcal{M}_n$ is model of $\mathrm{Sk}_A$ and $\langle I_n, <_n \rangle$ with $I_n \subseteq M_n$ is a set of $n$-variable indiscernibles for $\mathcal{M}_n$. If for all $n \in \omega$, for all $a_1 <_n \cdots <_n a_n$ in $I_n$ and for all $b_1 <_{n+1} \cdots <_{n+1} b_n$ in $I_{n+1}$,
$$\langle \mathcal{M}_n, a_1, \ldots, a_n \rangle \equiv_{\mathcal{L}_A} \langle \mathcal{M}_{n+1}, b_1, \ldots, b_n \rangle$$
then for any linear order $\langle I, < \rangle$ there is an $\mathcal{L}_A$-structure $\mathcal{M}$ with $I \subseteq M$ such that $I$ is a set of indiscernibles for $\mathcal{M}$ and for all $n \in \omega$, for all $a_1 <_n \cdots <_n a_n$ in $I_n$ and for all $b_1 < \cdots < b_n$ in $I$,
$$\langle \mathcal{M}_n, a_1, \ldots, a_n \rangle \equiv_{\mathcal{L}_A} \langle \mathcal{M}, b_1, \ldots, b_n \rangle.$$
\end{lemma}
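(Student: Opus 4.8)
The plan is to carry out an Ehrenfeucht--Mostowski construction adapted to the infinitary Skolem fragment $\mathcal{L}_A$: I would use the coherent family $(\mathcal{M}_n, I_n)_{n \in \omega}$ to manufacture a single ``limit'' type, and then realise that type by a term model built from the Skolem functions. First I would distil the data into one type. For each $n$ set
\[ p_n = \{\phi(x_1, \ldots, x_n) : \mathcal{M}_n \models \phi(a_1, \ldots, a_n)\}, \]
where $\phi$ ranges over $\mathcal{L}_A$-formulae and $a_1 <_n \cdots <_n a_n$ is any increasing tuple from $I_n$; by $n$-variable indiscernibility this is independent of the chosen tuple, so each $p_n$ is a complete $\mathcal{L}_A$-type in $x_1, \ldots, x_n$. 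The hypothesis $\langle \mathcal{M}_n, \vec{a}\rangle \equiv_{\mathcal{L}_A} \langle \mathcal{M}_{n+1}, \vec{b}\rangle$ is exactly what is needed to see that $p_n$ is the restriction of $p_{n+1}$ to formulae in $x_1, \ldots, x_n$, so the types cohere and $p := \bigcup_{n} p_n$ is a single complete type in the variables $(x_m)_{m \in \omega}$. The crucial point --- and the reason the hypothesis insists that the $\mathcal{M}_n$ be genuine structures --- is that each $p_n$, being realised in a real structure, has the \emph{disjunction property}: whenever $\bigvee_j \psi_j \in p_n$, some disjunct $\psi_j$ already lies in $p_n$.

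Next I would build the term model. Adjoin a constant $c_i$ for each $i \in I$ and let the universe consist of the $\mathcal{L}_A$-Skolem terms $t(c_{i_1}, \ldots, c_{i_k})$ with $i_1 < \cdots < i_k$ in $I$, with the interpretations of $\in$, $=$, and the constants $\hat{a}$ dictated by $p$ (after merging the indices of two terms into increasing order): two terms are identified, and an atomic relation between them declared to hold, precisely when the corresponding $\mathcal{L}_A$-formula belongs to the appropriate $p_k$, while function symbols, in particular the Skolem functions, are interpreted by term formation. Call the resulting structure $\mathcal{M}$.

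The heart of the argument, and the step I expect to be the main obstacle, is the satisfaction lemma: for every $\mathcal{L}_A$-formula $\phi(x_1, \ldots, x_n)$ and all $i_1 < \cdots < i_n$ in $I$,
\[ \mathcal{M} \models \phi(c_{i_1}, \ldots, c_{i_n}) \iff \phi \in p_n, \]
proved by induction on $\mathcal{L}_A$-formulae (more precisely, on formulae with Skolem terms substituted for the variables). The atomic case holds by construction and negation by completeness of $p_n$; the infinitary conjunctions and disjunctions are governed by the disjunction property of $p_n$ established above; and the finite blocks of quantifiers are reduced to quantifier-free statements about terms via the Skolem axioms $\mathrm{Sk}_A$, which is exactly why the whole construction must take place inside a Skolem fragment. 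This step is delicate because in $\mathcal{L}_{\infty \omega}^A$ compactness is unavailable, so the satisfaction of the infinitary axioms cannot be extracted from a compactness argument and must instead be forced term-by-term through $p$.

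Finally I would read off the three conclusions from the satisfaction lemma. Taking $n = 0$ gives $\mathcal{M} \models \mathrm{Sk}_A$ (hence $\mathcal{M} \models T_A$, so $\mathcal{M}$ end-extends $A$, and the $c_i$ are pairwise distinct since $x_1 \neq x_2 \in p_2$). For fixed $n$ both sides of the equivalence depend only on $p_n$ and not on the particular increasing tuple, so $I = \{c_i : i \in I\}$ is a set of indiscernibles for $\mathcal{M}$. And for $a_1 <_n \cdots <_n a_n$ in $I_n$ and $b_1 < \cdots < b_n$ in $I$ we obtain, for every $\mathcal{L}_A$-formula $\phi$, that $\mathcal{M}_n \models \phi(\vec{a})$ iff $\phi \in p_n$ iff $\mathcal{M} \models \phi(\vec{b})$, which is the required $\langle \mathcal{M}_n, \vec{a}\rangle \equiv_{\mathcal{L}_A} \langle \mathcal{M}, \vec{b}\rangle$.
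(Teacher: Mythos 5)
The paper does not prove this lemma itself: it is quoted from Barwise and Kunen \cite{BK71}, so there is no in-paper argument to compare against; your proposal reconstructs the standard proof of the cited result, and it is sound. The architecture is exactly right: the coherence hypothesis makes the $n$-types $p_n$ of increasing tuples cohere into a single type $p$; realizability of each $p_n$ in an honest structure $\mathcal{M}_n$ supplies completeness and the conjunction/disjunction properties that replace compactness (which is unavailable in $\mathcal{L}_{\infty\omega}^A$); and the Skolem axioms $\mathrm{Sk}_A$, together with the closure of the fragment $\mathcal{L}_A$ under Skolem terms and substitution, reduce the quantifier case of the satisfaction lemma for the term model to instances already covered by the induction. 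You correctly identify these as the two points where the finitary Ehrenfeucht--Mostowski argument would otherwise break, which is precisely why the lemma is stated for models of $\mathrm{Sk}_A$ and for a Skolem fragment.

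Two small corrections. First, your parenthetical ``hence $\mathcal{M} \models T_A$'' does not follow: $T_A$ is not a consequence of $\mathrm{Sk}_A$, and the hypotheses of the lemma do not require $\mathcal{M}_n \models T_A$. This is harmless here, since $T_A$ appears nowhere in the conclusion; in the paper's application $T_A$ does transfer to the new structure, but via the $n=0$ case of the satisfaction lemma together with the fact that there $p_0$ contains $T_A$ (the models supplying the indiscernibles satisfy it), not via $\mathrm{Sk}_A$. Second, the injectivity of $i \mapsto c_i$ through $x_1 \neq x_2 \in p_2$ tacitly assumes $|I_2| \geq 2$; this is implicit in the statement (one needs $|I_n| \geq n$ for $p_n$ to be defined at all) and holds in the application, where the $I_n$ are infinite, but it deserves a word in a complete write-up.
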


We now turn to building a model of $T_A$ with indiscernibles. The following definition appears in \cite{Jen69}.

\begin{dfn}
Let $I$ be a set and let $\langle f_n \mid n \in \omega \rangle$ be a family of partitions such that for all $n \in \omega$, $f_n$ has domain $[I]^n$. We say that $\langle c_n \mid n \in \omega \rangle$ is realizable for $\langle f_n \mid n \in \omega \rangle$ if for all $n \in \omega$ and for all $\beta < |I|$, there exists an $I_n \subseteq I$ such that $|I_n| \geq \beta$ and $f_k([I_n]^k)=\{c_k\}$ for all $k \leq n$.
\end{dfn}

The $n$-variable indiscernibles on the $\mathcal{L}_A$-structures required so that we can apply Lemma \ref{Th:InfinitaryLogicLemma} are obtained by considering a sequence of partitions $\langle f_n \mid n \in \omega \rangle$ that admit a realizable sequence $\langle c_n \mid n \in \omega \rangle$. If $\lambda$ is a cardinal then the generalized beth operation is defined by recursion: $\beth_0(\lambda)= \lambda$, $\beth_{\alpha+1}(\lambda)= 2^{\beth_\alpha(\lambda)}$ and (if $\alpha$ is a limit ordinal) $\beth_\alpha(\lambda)= \sup \{ \beth_\beta(\lambda) \mid \beta < \alpha\}$. If $\lambda$ is a cardinal then the function $\alpha \mapsto \beth_\alpha(\lambda)$ is inflationary and continuous. It follows from this observation that the function $\alpha \mapsto \beth_\alpha(\lambda)$ has arbitrarily large fixed points. We will use the Erd\H{o}s-Rado Theorem \cite{ER56} to produce realizable sequences.

\begin{lemma} \label{Th:ErdosRado}
(Erd\H{o}s-Rado) Let $\lambda$ be an infinite cardinal. If $f: [X]^{n+1} \longrightarrow \lambda$ is a partition with $|X| \geq \beth_n(\lambda)^+$ then there exists $H \subseteq X$ with $|H| \geq \lambda^+$ and $\gamma \in \lambda$ such that $f``[H]^{n+1}= \{\gamma\}$.
\end{lemma}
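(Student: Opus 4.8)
The plan is to prove the statement by induction on $n$, which amounts to establishing the partition relation $\beth_n(\lambda)^+ \to (\lambda^+)^{n+1}_\lambda$. The base case $n = 0$ is pure pigeonhole: a map $f : [X]^1 \to \lambda$ is just a $\lambda$-colouring of the (at least $\lambda^+$ many) points of $X$, and since $\lambda^+$ is regular while a union of $\lambda$ sets of size $\le \lambda$ has size $\le \lambda < \lambda^+$, some colour class $H$ has $|H| \ge \lambda^+$.

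For the inductive step I would reduce the arity by one by passing through an \emph{end-homogeneous} set. Write $\mu = \beth_n(\lambda)$, so the hypothesis reads $|X| \ge (2^\mu)^+ = \beth_{n+1}(\lambda)^+$; fix $f : [X]^{n+2} \to \lambda$ together with a well-ordering of $X$. Call $Y \subseteq X$ end-homogeneous if $f(s \cup \{a\}) = f(s \cup \{b\})$ whenever $s \in [Y]^{n+1}$ and $a, b \in Y$ both exceed $\max(s)$. Granting an end-homogeneous $Y$ with $|Y| \ge \mu^+ = \beth_n(\lambda)^+$, define $f' : [Y]^{n+1} \to \lambda$ by $f'(s) = f(s \cup \{a\})$ for any $a \in Y$ above $\max(s)$; end-homogeneity makes this well-defined. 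The induction hypothesis applied to $f'$ yields $H \subseteq Y$ with $|H| \ge \lambda^+$ on which $f'$ is constant, say with value $\gamma$. Then $H$ is $f$-homogeneous: for $t \in [H]^{n+2}$ with top element $a = \max(t)$ and $s = t \setminus \{a\}$ we get $f(t) = f(s \cup \{a\}) = f'(s) = \gamma$.

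The construction of the end-homogeneous set is the heart of the matter. I would build a sequence $\langle a_\xi \mid \xi < \mu^+ \rangle$ together with a decreasing chain of reservoirs $X_\xi \subseteq X$ by recursion: set $X_0 = X$, let $a_\xi$ be the $<$-least element of $X_\xi$, and let $X_{\xi+1}$ be a large subset of $X_\xi \setminus \{a_\xi\}$ on which the type map $x \mapsto (s \mapsto f(s \cup \{x\}))$, with $s$ ranging over $[\{a_\eta \mid \eta \le \xi\}]^{n+1}$, is constant. The decisive count is that, since $|\{a_\eta \mid \eta \le \xi\}| \le \mu$ and $\lambda \le \mu$, there are at most $\lambda^\mu = 2^\mu$ such types, so some type-class has size $(2^\mu)^+$ whenever $|X_\xi| = (2^\mu)^+$. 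The set $Y = \{a_\xi \mid \xi < \mu^+\}$ is then end-homogeneous directly from the construction, since any $s \in [Y]^{n+1}$ lies in $\{a_\eta \mid \eta \le \xi\}$ for some $\xi$, and all later points were chosen from a type-class constant on such $s$.

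The main obstacle is the bookkeeping that keeps $|X_\xi| = (2^\mu)^+$ throughout a recursion of length $\mu^+$, and in particular the limit stages, where $X_\xi = \bigcap_{\eta < \xi} X_\eta$ and a naive intersection of $\le \mu$ decreasing reservoirs need not remain large. This is precisely where the hypothesis $|X| \ge (2^\mu)^+$ is used: the cardinal $(2^\mu)^+$ is regular and strictly exceeds the bound $2^\mu$ on the number of types, and moreover $\mu^+ \le 2^\mu < (2^\mu)^+$, so the recursion is shorter than the regular cardinal it inhabits. I would set the invariant up with enough care at limit stages — tracking the chosen types coherently so that the surviving reservoir is again exhibited as a large type-class over the points chosen so far, rather than relying on an unanalysed intersection — so that the reservoir provably never drops below $(2^\mu)^+$. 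Once this invariant is secured, the recursion runs through all $\mu^+$ stages and delivers the required end-homogeneous set, completing the induction.
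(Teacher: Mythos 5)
The paper itself gives no proof of this lemma --- it is quoted as a classical result with a citation to Erd\H{o}s--Rado [ER56] --- so your proposal is measured against the standard proof. Your outer skeleton is exactly the standard route: induction on $n$, pigeonhole plus regularity of $\lambda^+$ at the base, reduction of arity through an end-homogeneous set $Y$ of order type $\mu^+$ (which, being a limit, makes $f'$ totally defined), and the final transfer of homogeneity from $f'$ to $f$. All of that is correct. The gap is in the one place you flag, and it is not mere bookkeeping: the repair you sketch cannot be carried out in the form you describe. At a limit stage $\xi$, the intersection $\bigcap_{\eta<\xi} X_\eta$ is exactly the \emph{single} type-class over $\{a_\eta \mid \eta<\xi\}$ determined by the union of your successor choices, and largeness of every $X_\eta$ does not make that class large. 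Concretely (take $\lambda=2$ and a limit of cofinality $\omega$): an adversarial $f$ can arrange that, for each $m$, the class of points whose type along the first $\omega$ parameters reads $0^m 1 0 0 \dots$ has size $(2^\mu)^+$, while the all-zero class is empty. Then at every finite stage both the $0$-continuation and the $1$-continuation are large, a greedy ``pick a large class'' recursion may legitimately follow the all-zero path, and the reservoir dies at stage $\omega$. So no invariant of the form ``the surviving reservoir is again a large type-class'' is securable by successor-stage choices alone; the choice at stage $\eta$ would have to anticipate every later limit, which is precisely what the greedy scheme cannot do.

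The standard proofs avoid decreasing reservoirs altogether. One clean version: first build $B \subseteq X$ with $|B| = 2^\mu$ that is type-closed, meaning every $(n+1)$-type over a subset of $B$ of size $\le \mu$ that is realized in $X$ by a point outside the parameter set is realized by such a point in $B$; this closure is possible because $B$ has at most $(2^\mu)^\mu = 2^\mu$ subsets of size $\le\mu$ and there are at most $\lambda^\mu = 2^\mu$ types over each, so iterating the closure $\mu^+ \le 2^\mu$ times with unions at limits keeps $|B| = 2^\mu$. Since $|X| \ge (2^\mu)^+$, pick $x^* \in X \setminus B$ and recursively choose $a_\xi \in B$ realizing the type of $x^*$ over $\{a_\eta \mid \eta < \xi\}$ (a parameter set of size $\le \mu$, so closure applies, and $x^*$ itself witnesses that the type is realized in $X$ off the parameters). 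End-homogeneity is then automatic: for $s \in [\{a_\eta\}]^{n+1}$ and any two later points $a_\xi, a_{\xi'}$, both $f(s \cup \{a_\xi\})$ and $f(s \cup \{a_{\xi'}\})$ equal $f(s \cup \{x^*\})$. Limits now cost nothing because there is no reservoir to preserve. (Equivalently one can run the ramification-tree argument of the original Erd\H{o}s--Rado paper, or use an elementary submodel $M \prec H_\theta$ of size $2^\mu$ closed under $\mu$-sequences and a point $x^* \in X \setminus M$.) With this replacement for your middle construction, the rest of your induction goes through verbatim.
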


We now turn to building a model of $\mathrm{KPR}$ that end-extends $A$ and admits a non-trivial automorphism. Let $\eta_0$ be the least ordinal such that $\beth_{\eta_0}(2^\kappa)= \eta_0$. Now, recursively define
$$\eta_{\alpha+1} \textrm{ to be the least ordinal }>\eta_\alpha \textrm{ such that } \beth_{\eta_{\alpha+1}}(2^\kappa)= \eta_{\alpha+1},$$
$$\eta_\alpha= \sup_{\beta < \alpha} \eta_\beta \textrm{ for limit } \alpha.$$
Since the function $\alpha \mapsto \beth_\alpha(2^\kappa)$ is continuous it follows that for all ordinals $\alpha$,
$$\eta_\alpha= \beth_{\eta_\alpha}(2^\kappa).$$ 
Therefore, for all ordinals $\alpha$, $V_{\eta_\alpha}= H_{\eta_\alpha}$ and so $\langle V_{\eta_\alpha}, \in \rangle$ is a model of $\mathrm{KPR}$. Let
$$I= \{V_{\eta_\alpha} \mid \alpha < \eta_{(2^\kappa)^+} \}.$$
The membership relation ($\in$) linearly orders $I$ and we will often abuse notation and use $<$ to denote this linear order. Now, $|I|= \eta_{(2^\kappa)^+}$ and $\mathrm{cf}(\eta_{(2^\kappa)^+})= (2^\kappa)^+ > 2^\kappa$. The argument used to prove the following Lemma can be found in \cite{Jen69}.

\begin{lemma} \label{Th:RealizableSequenceLemma}
If $\langle f_n \mid n \in \omega \rangle$ is a family of partitions such that for all $n \in \omega$,
$$f_n: [I]^{n+1} \longrightarrow 2^\kappa$$
then there exists a sequence $\langle c_n \mid n \in \omega \rangle$ that is realizable for $\langle f_n \mid n \in \omega \rangle$. 
\end{lemma}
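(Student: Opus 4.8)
The plan is to build the realizable sequence $\langle c_n \mid n \in \omega \rangle$ one color at a time, extracting each $c_n$ by iterating the Erd\H{o}s--Rado Theorem (Lemma \ref{Th:ErdosRado}) and then stabilizing with a pigeonhole argument that exploits the large cofinality of $|I|$. Throughout I would keep in play the two structural facts already secured: $|I| = \eta_{(2^\kappa)^+}$ has cofinality $(2^\kappa)^+ > 2^\kappa$, and each $\eta_\alpha$ is a fixed point of $\xi \mapsto \beth_\xi(2^\kappa)$, so that any finite tower of beth operations applied to an ordinal $\beta < |I|$ again lands below $|I|$.

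First I would establish a single-level homogenization claim: for every $n \in \omega$ and every $\beta < |I|$ there is a set $J \subseteq I$ with $|J| \geq \beta$ that is simultaneously homogeneous for $f_0, \dots, f_n$, i.e. $f_k``[J]^{k+1}$ is a singleton for each $k \leq n$. To prove it, start from an initial segment of $I$ whose cardinality is a beth tower of height $n+1$ over $\beta$ --- still below $|I|$ by the fixed-point property --- and apply Erd\H{o}s--Rado $n+1$ times, homogenizing first for $f_n$, then for $f_{n-1}$, and so on down to $f_0$. At each stage the size requirement $\beth_k(\gamma)^+$ of Lemma \ref{Th:ErdosRado} is met because we began high enough, and the final set has size $\geq \beta$.

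Next, for each $n$ let $\mathcal C_n(\beta) \subseteq (2^\kappa)^{n+1}$ be the set of colour-tuples $\langle d_0, \dots, d_n \rangle$ for which some $J \subseteq I$ satisfies $|J| \geq \beta$ and $f_k``[J]^{k+1} = \{d_k\}$ for all $k \leq n$. By the homogenization claim each $\mathcal C_n(\beta)$ is nonempty; the family is decreasing in $\beta$ and lives inside the fixed set $(2^\kappa)^{n+1}$ of cardinality $2^\kappa$. Were $\mathcal C_n^* := \bigcap_{\beta < |I|} \mathcal C_n(\beta)$ empty, each of the at most $2^\kappa$ tuples would first be absent from $\mathcal C_n(\beta_s)$ for some $\beta_s < |I|$; since $\mathrm{cf}(|I|) = (2^\kappa)^+ > 2^\kappa$, the supremum $\beta^*$ of these $\beta_s$ lies below $|I|$, forcing $\mathcal C_n(\beta^*) = \varnothing$ and contradicting the claim. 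Hence $\mathcal C_n^*$ is nonempty, and a tuple in it admits homogeneous sets of every size below $|I|$. I would then show the coordinate-dropping projection $\mathcal C_{n+1}^* \to \mathcal C_n^*$ is surjective: given $\langle c_0, \dots, c_n \rangle \in \mathcal C_n^*$, for each $\beta$ take a homogeneous $J$ large enough to apply Erd\H{o}s--Rado once more to $f_{n+1}$, obtaining a subset of size $\geq \beta$ homogeneous also for $f_{n+1}$ with some colour in $2^\kappa$; as $\beta \to |I|$ one such colour recurs cofinally, yielding an extension in $\mathcal C_{n+1}^*$. Choosing $c_0 \in \mathcal C_0^*$ and extending step by step through these surjections produces $\langle c_n \mid n \in \omega \rangle$ with every initial segment in $\mathcal C_n^*$, which is precisely realizability.

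The main obstacle --- and the reason the ambient cardinal arithmetic was arranged so carefully --- is controlling cardinalities through the iterated Erd\H{o}s--Rado applications: each application inflates the required source size by a beth operation, so without the fixed-point closure of the $\eta_\alpha$ the homogeneous sets could not be kept cofinal in $I$, and without $\mathrm{cf}(|I|) > 2^\kappa$ the two pigeonhole stabilizations (nonemptiness of $\mathcal C_n^*$ and surjectivity of the projections) would both break down.
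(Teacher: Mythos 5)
Your proposal is correct and is essentially the paper's own proof: the paper runs a single induction in which, given $c_0, \ldots, c_{n-1}$ admitting arbitrarily large simultaneously homogeneous subsets of $I$, one application of Erd\H{o}s--Rado refines such sets for $f_n$ and the pigeonhole via $\mathrm{cf}(|I|) = (2^\kappa)^+ > 2^\kappa$ stabilizes a single colour $c_n$ occurring on arbitrarily large sets --- which is exactly your surjectivity-of-projections step in different notation. Your extra scaffolding (the standalone iterated Erd\H{o}s--Rado homogenization claim and the nonemptiness of every $\mathcal{C}_n^*$) is sound but redundant, since only $\mathcal{C}_0^* \neq \varnothing$ together with the projection step is needed, and that is precisely what the paper's induction carries along directly.
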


\begin{proof}
We inductively construct $\langle c_n \mid n \in \omega \rangle$. Suppose that we have $c_0, \ldots, c_{n-1} \in 2^\kappa$ such that for all $\beta < |I|$, there exists $D \subseteq I$ with $|D| \geq \beta$ and
$$f_k([D]^{k+1})= \{c_k\} \textrm{ for all } k < n.$$
Let $\mathcal{U}_{n-1}$ be the set of all $D \subseteq I$ such that
$$f_k([D]^{k+1})= \{c_k\} \textrm{ for all } k < n.$$
Note that our inductive hypothesis ensures that $\mathcal{U}_{n-1}$ contains arbitrarily large subsets of $I$. Now, let $\mathcal{U}^\prime$ be the set of all $B \subseteq I$ such that there exists a $D \in \mathcal{U}_{n-1}$ and $c \in 2^\kappa$ with $B \subseteq D$ and
$$f_n([B]^{n+1})= \{c\}.$$
Lemma \ref{Th:ErdosRado} ensures that $\mathcal{U}^\prime$ contains arbitrarily large subsets of $I$. Using the fact that $\mathrm{cf}(|I|)> 2^\kappa$ we can find $c_n \in 2^\kappa$ and $\mathcal{U}_n \subseteq \mathcal{U}^\prime$ which contains arbitrarily large subsets of $I$ such that for all $B \in \mathcal{U}_n$,
$$f_n([B]^{n+1})=\{c_n\}.$$
Therefore, for all $\beta < |I|$, there exists $D \subseteq I$ with $|D| \geq \beta$ and 
$$f_k([D]^{k+1})= \{c_k\} \textrm{ for all } k \leq n.$$
Therefore we can build $\langle c_n \mid n \in \omega \rangle$ that is realizable for $\langle f_n \mid n \in \omega \rangle$ by induction.       
\end{proof}

Let $\lhd$ be a well-ordering of $V_{\eta_{\eta_{(2^\kappa)^+}}}$. Define an $\mathcal{L}_A$-structure 
$$\mathcal{M}= \langle M, \in, (\hat{a}^\mathcal{M})_{a \in A}, (F_{\exists x \phi}^\mathcal{M})_{\exists x \phi \in \mathcal{L}_A} \rangle$$ 
such that:
\begin{itemize}
\item $M= V_{\eta_{\eta_{(2^\kappa)^+}}}$,
\item for all $a \in A$, $\hat{a}^\mathcal{M}= a$,
\item for all $\mathcal{L}_A$-formulae $\exists x \phi(x, \vec{y})$,
$$F_{\exists x \phi}^\mathcal{M}(\vec{y})= \left\{ \begin{array}{ll}
\varnothing & \textrm{if } \mathcal{M} \models \neg \exists x \phi(x, \vec{y})\\
\lhd\textrm{-least } a \textrm{ s.t. } \mathcal{M} \models \phi(a, \vec{y}) & \textrm{otherwise}
\end{array}\right.$$
\end{itemize}
Therefore, we have
$$\mathcal{M} \models (\mathrm{KPR}+\mathrm{Sk}_A+T_A) \textrm{ and } I \subseteq M.$$

\begin{lemma} \label{Th:IndiscernibleSequenceLemma}
There exists a family $\langle I_n \mid n \in \omega \rangle$ of infinite subsets of $I$ such that for all $n \in \omega$, $I_n$ is a set of $n$-variable indiscernibles for $\mathcal{M}$, and for all $a_0 < \cdots < a_n$ in $I_n$ and for all $b_0 < \cdots < b_n$ in $I_{n+1}$,
$$\langle \mathcal{M}, a_0, \ldots, a_n \rangle \equiv_{\mathcal{L}_A} \langle \mathcal{M}, b_0, \ldots, b_n \rangle.$$
\end{lemma}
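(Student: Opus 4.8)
The plan is to homogenise a single sequence of partitions of $I$ coloured by complete $\mathcal{L}_A$-types, and then to read off both the $n$-variable indiscernibility of each $I_n$ and the cross-level coherence condition from the fact that one realizable sequence $\langle c_n \mid n \in \omega \rangle$ simultaneously controls every level. First I would define, for each $n \in \omega$, a partition
$$f_n : [I]^{n+1} \longrightarrow 2^\kappa$$
sending an increasing tuple $a_0 < \cdots < a_n$ from $I$ to a code for its complete $\mathcal{L}_A$-type in $\mathcal{M}$, namely the set $\{\phi \in \mathrm{Fm}_A \mid \mathcal{M} \models \phi(a_0, \ldots, a_n)\}$ of formulae with free variables among $x_0, \ldots, x_n$ that it satisfies. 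Since $|\mathrm{Fm}_A| = \kappa$, each such type is coded by a subset of a set of size $\kappa$, so there are at most $2^\kappa$ possible values and $f_n$ indeed takes values in $2^\kappa$; this is the step where the cardinal bookkeeping is used, and it explains why the partitions were arranged to map into $2^\kappa$.

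Next I would apply Lemma \ref{Th:RealizableSequenceLemma} to $\langle f_n \mid n \in \omega \rangle$ to obtain a sequence $\langle c_n \mid n \in \omega \rangle$ realizable for it. For each fixed $n$, instantiating the definition of realizability at $\beta = \omega$ (permissible since $\omega < |I| = \eta_{(2^\kappa)^+}$) produces an infinite $I_n \subseteq I$ with $f_k([I_n]^{k+1}) = \{c_k\}$ for all $k \leq n$. I take these sets $I_n$ as the required family.

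It then remains to verify the two conclusions. For $n \geq 1$ we have $f_{n-1}([I_n]^n) = \{c_{n-1}\}$, so every increasing $n$-tuple from $I_n$ realises the single type coded by $c_{n-1}$; hence $I_n$ is a set of $n$-variable indiscernibles for $\mathcal{M}$ (the case $n = 0$ being vacuous). For the coherence condition, take $a_0 < \cdots < a_n$ in $I_n$ and $b_0 < \cdots < b_n$ in $I_{n+1}$; since $n \leq n$ and $n \leq n+1$ we get $f_n(\{a_0, \ldots, a_n\}) = c_n = f_n(\{b_0, \ldots, b_n\})$, and because $f_n$ records the full $\mathcal{L}_A$-type, the two tuples satisfy exactly the same $\mathcal{L}_A$-formulae, giving $\langle \mathcal{M}, a_0, \ldots, a_n \rangle \equiv_{\mathcal{L}_A} \langle \mathcal{M}, b_0, \ldots, b_n \rangle$.

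The genuinely load-bearing idea, as opposed to any computational obstacle, is that all levels must be homogenised against the \emph{same} constants $\langle c_n \rangle$: were one to extract indiscernibles for each $n$ independently, the common colour at level $n$ could differ between the set produced at stage $n$ and the one at stage $n+1$, and the clause linking $I_n$ and $I_{n+1}$ would collapse. Channelling everything through a single realizable sequence is exactly what forces the shared value $c_n$ to be realised by $(n+1)$-tuples drawn from both $I_n$ and $I_{n+1}$, which is precisely the hypothesis that the Barwise--Kunen lemma (Lemma \ref{Th:InfinitaryLogicLemma}) will consume downstream.
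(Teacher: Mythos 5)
Your proposal is correct and follows essentially the same route as the paper: colour increasing $(n+1)$-tuples from $I$ by their complete $\mathcal{L}_A$-types (the paper maps into $\mathcal{P}(\mathrm{Fm}_A)$, which has size $2^\kappa$, exactly the coding you make explicit), apply Lemma \ref{Th:RealizableSequenceLemma} to get a single realizable sequence $\langle c_n \mid n \in \omega \rangle$, extract each infinite $I_n$ homogeneous for $f_0, \ldots, f_n$ with colours $c_0, \ldots, c_n$, and read off both indiscernibility and the cross-level condition from the shared value $c_n$. Your closing observation about channelling all levels through one realizable sequence is precisely the point of the paper's construction as well.
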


\begin{proof}
For each $n \in \omega$, define $f_n:[I]^{n+1} \longrightarrow \mathcal{P}(\mathrm{Fm}_A)$ by
$$f_n(\{a_0 < \cdots < a_n\})= \{\phi \mid \phi \textrm{ is an } n+1\textrm{-ary } \mathcal{L}_A\textrm{-formula and } \mathcal{M} \models \phi(a_0, \ldots, a_n)\}.$$
Using Lemma \ref{Th:RealizableSequenceLemma} we can find a sequence $\langle c_n \mid n \in \omega \rangle$ that is realizable for $\langle f_n \mid n \in \omega \rangle$. Therefore we can inductively build a sequence $\langle I_n \mid n \in \omega \rangle$ such that for all $n \in \omega$, $|I_n| \geq \omega$ and 
$$f_k([I_n]^{k+1})= \{c_k\} \textrm{ for all } k \leq n.$$
Let $n \in \omega$. It is clear that $I_n$ is a set of $n$-variable indiscernibles for $\mathcal{M}$. Let $a_0 < \cdots < a_n$ be in $I_n$ and let $b_0 < \cdots < b_n$ be in $I_{n+1}$. We have
$$f_n(\{a_0 < \cdots < a_n\})= c_n= f_n(\{b_0 < \cdots < b_n\}).$$
Therefore, for all $\mathcal{L}_A$ formulae $\phi(x_0, \ldots, x_n)$,
$$\mathcal{M} \models \phi(a_0, \ldots, a_n) \textrm{ if and only if } \mathcal{M} \models \phi(b_0, \ldots, b_n).$$
Therefore
$$\langle \mathcal{M}, a_0, \ldots, a_n \rangle \equiv_{\mathcal{L}_A} \langle \mathcal{M}, b_0, \ldots, b_n \rangle.$$
\end{proof}

Let $\langle I_n \mid n \in \omega \rangle$ be the sequence whose existence is guaranteed by Lemma \ref{Th:IndiscernibleSequenceLemma}. We can now apply Lemma \ref{Th:InfinitaryLogicLemma} to obtain an $\mathcal{L}_A$-structure $\mathcal{M}^\prime$ and a linear order $\langle Q, < \rangle$ with $Q \subseteq M^\prime$ satisfying
\begin{itemize}
\item[(I)] the order-type of $\langle Q, < \rangle$ is $\mathbb{Z}$--- we write $Q= \{q_i \mid i \in \mathbb{Z}\}$, 
\item[(II)] $Q$ is a set of indiscernibles for $\mathcal{M}^\prime$,
\item[(III)] $\mathcal{M}^\prime \equiv_{\mathcal{L}_A} \mathcal{M}$,
\item[(IV)] for all $\mathcal{L}_A$-formulae $\phi(x_0, \ldots, x_n)$, if 
$$\mathcal{M} \models \phi(a_0, \ldots, a_n) \textrm{ for all } a_0 < \cdots < a_n \textrm{ in } I \textrm{ then}$$
$$\mathcal{M}^\prime \models \phi(q_{i_0}, \ldots, q_{i_n}) \textrm{ for all } i_0 < \cdots < i_n \textrm{ in } \mathbb{Z}.$$
\end{itemize}
Let $\mathcal{M}^{\prime\prime}$ be the $\mathcal{L}_A$-substructure of $\mathcal{M}^\prime$ generated by $Q$, and the constants and functions of $\mathcal{L}_A$. Therefore $\mathcal{M}^{\prime\prime} \prec_{\mathcal{L}_A} \mathcal{M}^\prime$. Consider the order automorphism $j^\prime: Q \longrightarrow Q$ defined by $j^\prime(q_i)= q_{i+1}$ for all $i \in \mathbb{Z}$. Since every element of $\mathcal{M}^{\prime\prime}$ is the result of applying an $\mathcal{L}_A$-Skolem function to a finite tuple of elements of $Q$, the bijection $j^\prime$ can be raised to an automorphism $j: \mathcal{M}^{\prime\prime} \longrightarrow \mathcal{M}^{\prime\prime}$. Note that for all $a < b$ in $I$,
$$\mathcal{M} \models (a \cup \mathcal{P}(a) \subseteq b).$$
Therefore, it follows from (IV) above that
\begin{equation} \label{eq:NFUAutCondition}
\mathcal{M}^{\prime\prime} \models (q_0 \cup \mathcal{P}(q_0) \subseteq j(q_0)).
\end{equation}
Define the $\mathcal{L}^*$-structure $\mathcal{N}= \langle N, N_{\mathrm{sets}}, \in^\mathcal{N}, P^\mathcal{N}, \bar{V}^\mathcal{N} \rangle$ by
\begin{itemize}
\item $N= q_0^*$,
\item $N_{\mathrm{sets}}= (\hat{V}_\kappa^{\mathcal{M}^{\prime\prime}})^*$, 
\item for all $x, y \in q_0^*$,
$$x \in^\mathcal{N} y \textrm{ if and only if } \mathcal{M}^{\prime\prime} \models (j(y) \subseteq q_0 \land x \in j(y)),$$
$$P^\mathcal{N}(x, y)= \{\{x\}, \{x, y \}\} \in q_0^*,$$
\item $\bar V^\mathcal{N}= \hat{V}_\kappa^{\mathcal{M}^{\prime\prime}}$
\end{itemize}  
where the set variables $x, y, z, \ldots$ range over the domain $N_{\mathrm{sets}}$ and the class variables $X, Y, Z, \ldots$ range over the domain $N$.

\begin{lemma}
$\mathcal{N} \models S^*$.
\end{lemma}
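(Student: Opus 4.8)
The plan is to set up a dictionary between $\mathcal{N}$ and the ambient model $\mathcal{M}^{\prime\prime}$ and then check the axioms of $S^*$ in groups, isolating Stratified Comprehension as the only axiom that genuinely uses the automorphism $j$ and condition (\ref{eq:NFUAutCondition}). In the dictionary the classes of $\mathcal{N}$ are the elements of $q_0^*$, a class $Y$ codes the collection $\{X \in q_0^* \mid \mathcal{M}^{\prime\prime} \models X \in j(Y)\}$, and $Y$ counts as a genuine (extensional) set of $\mathcal{N}$ exactly when $\mathcal{M}^{\prime\prime} \models j(Y) \subseteq q_0$; classes with $j(Y) \not\subseteq q_0$ have empty $\in^{\mathcal{N}}$-extension and serve as urelements, which is why only Weak Extensionality can hold. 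Weak Extensionality and Pairing fall out immediately: if $j(X), j(Y) \subseteq q_0$ have the same $\in^{\mathcal{N}}$-members then $j(X) = j(Y)$ by extensionality in $\mathcal{M}^{\prime\prime}$, whence $X = Y$ since $j$ is injective; and injectivity of $P^{\mathcal{N}}(x,y) = \{\{x\},\{x,y\}\}$ is just injectivity of Kuratowski pairing in $\mathcal{M}^{\prime\prime}$ (one also checks $\{\{x\},\{x,y\}\} \in q_0$, using that $q_0$ is a $j$-image of a rank-initial segment and hence closed under pairs). The single engine behind everything else is (\ref{eq:NFUAutCondition}): since $\mathcal{M}^{\prime\prime} \models \mathcal{P}(q_0) \subseteq j(q_0)$, any $S \in M^{\prime\prime}$ with $\mathcal{M}^{\prime\prime} \models S \subseteq q_0$ satisfies $S \in j(q_0)$, so $j^{-1}(S) \in q_0^*$; thus every subset of $q_0$ present in $\mathcal{M}^{\prime\prime}$ is realized as the $\in^{\mathcal{N}}$-extension of a class of $\mathcal{N}$.

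Next I would identify $\bar V$. Each $\hat a$ is a constant of $\mathcal{L}_A$, hence fixed by $j$, and $T_A$ makes the $\hat{\cdot}$-image of $A = V_\kappa \cup \{V_\kappa\}$ an end-extension copy of $A$; in particular $\mathcal{M}^{\prime\prime} \models \hat V_\kappa \subseteq q_0$, so for $x \in N_{\mathrm{sets}} = (\hat V_\kappa)^*$ the relation $X \in^{\mathcal{N}} x$ collapses to plain membership $\mathcal{M}^{\prime\prime} \models X \in x$. Consequently $\hat a \mapsto a$ is an isomorphism $\langle N_{\mathrm{sets}}, \in^{\mathcal{N}} \rangle \cong \langle V_\kappa, \in \rangle$, and the $\in^{\mathcal{N}}$-extension of $\bar V^{\mathcal{N}} = \hat V_\kappa$ is exactly $N_{\mathrm{sets}}$. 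Since $\kappa$ is inaccessible, $\langle V_\kappa, \in \rangle \models \mathrm{ZFC}$, which yields at once (4) Sets and Classes, (5) Empty Set, (6) Operations on Sets, and (7) Infinity. The two set-schemata carry class parameters and the twisted $\in^{\mathcal{N}}$, so I would verify them externally in the meta-theory rather than inside $V_\kappa$: for Replacement, the image of a set $a$ under a functional instance is a real subset of $N_{\mathrm{sets}}$ of size $\le |a| < \kappa$, so by regularity of $\kappa$ it has bounded rank and equals the transported copy of some $W_0 \in V_\kappa$; as $W_0 \in A$ its constant $\hat W_0$ lies in $\mathcal{M}^{\prime\prime}$ and realizes the image. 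Foundation reduces to well-foundedness of the real $\in$ on $V_\kappa \cong N_{\mathrm{sets}}$. For Universal Choice I would use the Skolem function $F_{\exists x(x \in y)}$, which selects the $\lhd$-least member of each nonempty set and is therefore a definable global choice function of $\mathcal{M}^{\prime\prime}$; the relevant part of its graph is an element of $\mathcal{M}^{\prime\prime}$ by separation, and pulling it back through $j^{-1}$ via (\ref{eq:NFUAutCondition}) produces the choice class $C$.

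The crux is Stratified Comprehension. Given a stratified $\phi(X, \vec Z)$ with a fixed stratification $\sigma$ and class parameters $\vec Z$, the goal is a class $Y$ whose $\in^{\mathcal{N}}$-extension is $S := \{X \in q_0^* \mid \phi^{\mathcal{N}}(X, \vec Z)\}$. The key lemma, by induction on $\phi$, is a translation showing $\phi^{\mathcal{N}}(v_1, \dots, v_k)$ equivalent over $\mathcal{M}^{\prime\prime}$ to a \emph{pure} $\mathcal{L}_A$-formula $\Phi$ in which each free variable $v$ is replaced by $j^{\sigma(v)}(v)$ and each quantifier $\exists w$ with $\sigma(w) = s$ is relativized to the parameter $j^{s}(q_0)$. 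The atomic step works because for $\sigma(v) = t$, $\sigma(w) = t+1$ the membership $v \in^{\mathcal{N}} w$ unwinds to $\mathcal{M}^{\prime\prime} \models j(w) \subseteq q_0 \wedge v \in j(w)$, and applying the automorphism $j^{t}$ turns this into $j^{t}(v) \in j^{t+1}(w)$ with side condition $j^{t+1}(w) \subseteq j^{t}(q_0)$; equality behaves identically. At a quantifier the bound variable's shift is absorbed by the substitution $w' = j^{s}(w)$, a bijection of $M^{\prime\prime}$, so no $j$ survives on bound variables. It is exactly stratification that makes these powers of $j$ globally consistent; for an unstratified formula one variable would be forced to carry incompatible shifts and the absorption would break.

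With the translation in hand, $S = \{X \in q_0 \mid \Phi(j^{\sigma(X)}(X), j^{\sigma(Z_1)}(Z_1), \dots)\}$; substituting $X' = j^{\sigma(X)}(X)$ gives $S = j^{-\sigma(X)}\big(\{X' \in j^{\sigma(X)}(q_0) \mid \Phi(X', \vec p)\}\big)$ with fixed parameters $\vec p = (j^{\sigma(Z_i)}(Z_i)) \in M^{\prime\prime}$. The inner set exists in $\mathcal{M}^{\prime\prime}$ by full $\mathcal{L}_A$-separation, applying the automorphism $j^{-\sigma(X)}$ keeps us in $\mathcal{M}^{\prime\prime}$, so $S \in M^{\prime\prime}$ with $\mathcal{M}^{\prime\prime} \models S \subseteq q_0$; then (\ref{eq:NFUAutCondition}) gives $Y := j^{-1}(S) \in q_0^*$, and unwinding $\in^{\mathcal{N}}$ shows $Y$ has extension $S$. \textbf{The main obstacle} is the translation lemma together with the appeal to full separation. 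For the latter I would argue that the ground model $\mathcal{M} = V_{\eta_{\eta_{(2^\kappa)^+}}}$ is a rank-initial segment, so genuine subsets of its elements are again elements, giving full separation even for the expanded language $\mathcal{L}_A$ (with its constants and $\lhd$-based Skolem functions); this is a first-order $\mathcal{L}_A$-schema and so is inherited by $\mathcal{M}^{\prime}$ through $\equiv_{\mathcal{L}_A}$ and by the elementary substructure $\mathcal{M}^{\prime\prime}$. The delicate bookkeeping is getting the twisted membership, the sethood side conditions $j^{t+1}(w) \subseteq j^{t}(q_0)$, and the type-driven powers of $j$ to mesh so that $j$ is eliminated down to finitely many shifted parameters; once that is done, everything reduces to the inaccessibility of $\kappa$ and the end-extension property.
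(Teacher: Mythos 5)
Your proposal is correct and shares the paper's overall skeleton: the isomorphism of $\langle \bar V^*, \in^{\mathcal{N}} \rangle$ with $\langle V_\kappa, \in \rangle$ (the constants $\hat a$ are fixed by $j$ and $T_A$ forces end-extension) disposes of axioms 4--7; the Replacement and Foundation schemes are checked externally, using that $\mathcal{N}$ is a set in the metatheory and $\kappa$ is inaccessible; and Stratified Comprehension plus Weak Extensionality rest on (\ref{eq:NFUAutCondition}) together with $j$ fixing $N_{\mathrm{sets}}$. The genuine difference is self-containment: where the paper simply cites ``the arguments in \cite{Jen69}'' for axioms 1 and 2, you reconstruct Jensen's machinery explicitly --- the type-driven translation replacing $v$ by $j^{\sigma(v)}(v)$, relativizing type-$s$ quantifiers to $j^{s}(q_0)$, and pulling the resulting set back through $j^{-1}$ via $\mathcal{P}(q_0) \subseteq j(q_0)$ --- and you justify separation in $\mathcal{M}^{\prime\prime}$ by noting it is a first-order $\mathcal{L}_A$-schema true in the rank-initial segment $\mathcal{M}$, hence inherited through $\equiv_{\mathcal{L}_A}$ and $\prec_{\mathcal{L}_A}$. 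That is a valid and instructive expansion; note in passing that since all quantifiers in your translated formula are bounded by the parameters $j^{s}(q_0)$, the $\Delta_0$-separation available in $\mathrm{KPR}$ already suffices, which is precisely why Jensen's construction works over weak base theories, and that the closure of $q_0$ under pairing comes from indiscernible transfer of first-order properties of the $V_{\eta_\alpha} \in I$ (property (IV)), not from $q_0$ literally being ``a $j$-image of a rank-initial segment.'' The one place where you diverge from the paper and where your sketch does not quite work as stated is Universal Choice: the paper transfers it as a stratified sentence true in every element of $I$, via indiscernibility, whereas your direct construction from the Skolem function $F_{\exists x (x \in y)}$ cannot be completed by a single application of $j^{-1}$ --- the choice class must contain pairs $P(X, Z)$ in which $Z$ is the twisted singleton $j^{-1}(\{Y\})$ with $Y \in j(X)$, and since $j$ is not definable in $\mathcal{M}^{\prime\prime}$, the desired graph is not literally given by separation. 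The repair is to route Universal Choice through your own translation lemma (its $j$-eliminated, typed version holds in $\mathcal{M}$ by $\lhd$-least choice and transfers to the $q_i$ by (IV)); since you already have that lemma, this is a local patch rather than a missing idea.
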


\begin{proof}
Note that since every element of $N_{\mathrm{sets}}$ interprets a constant symbol in $\mathcal{M}^{\prime\prime}$, the automorphism $j$ fixes every element of $N_{\mathrm{sets}}$. This means that the structure $\langle \bar V^*, \in^\mathcal{N} \rangle$ is isomorphic to $\langle V_\kappa, \in \rangle$. The arguments in \cite{Jen69} show that since $q_0$ satisfies (\ref{eq:NFUAutCondition}) and $j$ fixes every element of $N_{\mathrm{sets}}$, $\mathcal{N}$ satisfies axiom 1 and axiom scheme 2 of $S^*$. The fact that Universal Choice is an $\mathcal{L}^*$-stratified sentence that holds in every element of $I$ implies axiom 10 of $S^*$ holds in $\mathcal{N}$. The fact that axiom 3 of $S^*$ holds in $\mathcal{N}$ follows immediately from the definition of $P^\mathcal{N}$. The fact that axioms 4-7 and axiom scheme 9 of $S^*$ hold in $\mathcal{N}$ follow immediately from the fact that  $\langle \bar V^*, \in^\mathcal{N} \rangle$ is isomorphic to $\langle V_\kappa, \in \rangle$. Since the structure $\mathcal{N}$ is a set (in the metatheory) and $\langle \bar V^*, \in^\mathcal{N} \rangle$ is isomorphic to $\langle V_\kappa, \in \rangle$ it follows that axiom scheme 8 of $S^*$ holds in $\mathcal{N}$.        
\end{proof}

Since the structure $\mathcal{N}$ is a set in the theory $\mathrm{ZFC}+\exists \kappa$ ``$\kappa$ is an inaccessible cardinal'' we have shown:

\begin{thm}
$\mathrm{ZFC}+\exists \kappa$ ``$\kappa$ is an inaccessible cardinal'' $\vdash \mathrm{Con}(S^*)$.
\end{thm}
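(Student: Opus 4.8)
The plan is to read this theorem as the formalized model-existence payoff of the entire construction of this section. Having produced, while working inside $\mathrm{ZFC} + \exists\kappa$ ``$\kappa$ is inaccessible'', an honest set $\mathcal{N}$ together with the preceding Lemma asserting $\mathcal{N} \models S^*$, I would conclude $\mathrm{Con}(S^*)$ from the soundness direction of the completeness theorem, provable already in weak fragments of $\mathrm{ZFC}$: a first-order theory possessing a set model is consistent. No further combinatorics are needed here; the content of the theorem is the bookkeeping observation that every ingredient of the construction lives below a set bound and that the verification $\mathcal{N} \models S^*$ can be phrased as a single internal statement.

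Concretely, I would proceed in three steps. First, fix an inaccessible $\kappa$ and record the two places where inaccessibility is genuinely consumed: it yields $\langle V_\kappa, \in\rangle \models \mathrm{ZFC}$ and $|\mathrm{Fm}_A| = \kappa$. Second, confirm that each stage is set-sized in the ground model: the beth-fixed-point tower $\langle \eta_\alpha \rangle$ and the base structure $\mathcal{M}$ on $V_{\eta_{\eta_{(2^\kappa)^+}}}$ are sets; the indiscernibles $\langle I_n \rangle$ of Lemma \ref{Th:IndiscernibleSequenceLemma}, obtained via Lemmas \ref{Th:ErdosRado} and \ref{Th:RealizableSequenceLemma}, are sets; and the Barwise--Kunen structure of Lemma \ref{Th:InfinitaryLogicLemma}, the Skolem hull $\mathcal{M}^{\prime\prime}$ generated by the $\mathbb{Z}$-indexed indiscernibles, the lifted automorphism $j$, and finally the interpreted $\mathcal{L}^*$-structure $\mathcal{N}$ with domain $q_0^*$ are all sets. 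In particular $\mathcal{N} \in V$, so the Tarskian satisfaction predicate of $\mathrm{ZFC}$ for set structures applies to it.

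Third, I would invoke the preceding Lemma to obtain $\mathcal{N} \models S^*$ and then appeal to the completeness theorem. The one point deserving care---and the step I expect to be the genuine obstacle in a fully rigorous write-up---is ensuring that ``$\mathcal{N} \models S^*$'' is a single $\mathrm{ZFC}$-statement rather than a metatheoretic schema, so that we obtain the full $\mathrm{Con}(S^*)$ and not merely a reflection schema. This is secured precisely because $\mathcal{N}$ is a set: for set structures $\mathrm{ZFC}$ defines a uniform satisfaction relation $\mathrm{Sat}(\mathcal{N}, \ulcorner\sigma\urcorner)$; the axioms of $S^*$ form a recursive set, its comprehension, replacement and foundation schemata being cut out by a decidable condition on formula codes (stratification itself being decidable); and the Lemma's verification is uniform across axioms---axioms 4--9 transfer from $\langle \bar V^*, \in^\mathcal{N}\rangle \cong \langle V_\kappa, \in \rangle$, axioms 1--2 from Jensen's automorphism argument using (\ref{eq:NFUAutCondition}) together with the fact that $j$ fixes $N_{\mathrm{sets}}$, axiom 3 from the definition of $P^\mathcal{N}$, and axiom 10 from its being a stratified sentence true throughout $I$. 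Hence $\forall \sigma \in S^*\, \mathrm{Sat}(\mathcal{N}, \ulcorner\sigma\urcorner)$ is provable, and the completeness theorem delivers $\mathrm{Con}(S^*)$, the whole argument having been carried out within $\mathrm{ZFC} + \exists\kappa$ ``$\kappa$ is an inaccessible cardinal''.
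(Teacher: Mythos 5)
Your proposal is correct and takes essentially the same route as the paper: there, too, the theorem is stated immediately after the lemma $\mathcal{N} \models S^*$, with the sole justification that $\mathcal{N}$ is a set in $\mathrm{ZFC} + \exists\kappa$ ``$\kappa$ is an inaccessible cardinal'', so a set model yields $\mathrm{Con}(S^*)$ by soundness. Your extra care about the uniform Tarskian satisfaction predicate for set structures (so that ``$\mathcal{N} \models S^*$'' is a single internal statement rather than a schema) is a faithful elaboration of the step the paper leaves implicit, not a departure from its argument.
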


In section \ref{Sec:FefSstar} we showed that $S^*$ proves the existence of a set model, $\bar{\bar{V}}$, of $\mathrm{ZC}+\exists \kappa$ ``$\kappa$ is an inaccessible cardinal''. This shows:

\begin{thm}
$S^* \vdash \mathrm{Con}(\mathrm{ZC}+\exists \kappa$ ``$\kappa$ is an inaccessible cardinal''$)$.
\end{thm}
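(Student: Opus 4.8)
The plan is to obtain this as a corollary of Theorem \ref{ConZC} together with the internal formalization of first-order semantics in NFU flagged in the footnote to that theorem. Reasoning inside $S^*$, Theorem \ref{ConZC} tells us that the object $\bar{\bar V}$ is a model of $T := \mathrm{ZC} + \exists\kappa\,$``$\kappa$ is inaccessible''. Since $\mathrm{Con}(T)$ is the syntactic assertion that no finite derivation of $\bot$ from the axioms of $T$ exists, and since all finitary syntax (formulae, derivations, the axiom set of $T$) can be coded inside $\bar V$ --- which models a strong fragment of $\mathrm{ZFC}$ and in particular contains $\omega$ --- the task reduces to an internal soundness argument: produce, inside $S^*$, a set model of $T$ and conclude that the set of its true sentences is deductively closed and omits $\bot$. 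As a preliminary I would fix inside $S^*$ the standard G\"odel coding of the language of set theory together with the set of codes of axioms of $T$; because $T$ is recursively (indeed decidably) axiomatised, this axiom set exists as a set in $\bar V$.

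The crucial step is to define internally the Tarskian satisfaction relation $\mathrm{Sat}$ for the structure $\langle \bar{\bar V}, \in \rangle$. In NFU one cannot in general form such a predicate: the recursive clause for quantifiers is unstratified, and a global truth predicate would contradict Tarski's theorem. This is exactly where strong cantorianicity is indispensable. Using the bijection $\bar{\bar B}$ constructed in Subsection \ref{Sec:FefSstar}, which witnesses that $\bar{\bar V}$ is strongly cantorian, one can raise and lower types in the recursion so that the defining condition becomes stratified; $\mathrm{Sat}$ then exists as a set by stratified comprehension and the recursion theorem. This is precisely Hinnion's development of the semantics of first-order logic for NFU in \cite{Hin75}, and it is what converts the external schema of Theorem \ref{ConZC} into the single internal statement ``$\langle \bar{\bar V}, \in \rangle$ satisfies $\sigma$'' holding for every coded axiom $\sigma$ of $T$.

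With $\mathrm{Sat}$ available, the rest is routine. Reusing the case analysis from the proof of Theorem \ref{ConZC} --- now carried out internally and uniformly over the coded axioms rather than as a metatheoretic schema --- I would verify that $\langle \bar{\bar V}, \in \rangle$ satisfies every element of the axiom set of $T$. Finally, observing inside $S^*$ that $\{\sigma \mid \langle \bar{\bar V}, \in \rangle \models \sigma\}$ contains all axioms of $T$ and is closed under the rules of inference, hence contains every $T$-theorem, while $\bot$ is not true in $\bar{\bar V}$, we conclude that $\bot$ is not derivable from $T$, i.e. $\mathrm{Con}(T)$. The main obstacle is squarely the internalization in the second step: realizing $\mathrm{Sat}$ as a bona fide set of $S^*$. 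The naive Tarskian recursion is unstratified, and only the strong cantorianicity of $\bar{\bar V}$ (via $\bar{\bar B}$) rescues it; the coding of syntax, the deductive-closure argument, and the reduction of $\mathrm{Con}(T)$ to the non-derivability of $\bot$ are all standard once the satisfaction relation is in hand.
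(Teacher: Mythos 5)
Your proposal is correct and takes essentially the same approach as the paper: the paper obtains this theorem directly as a corollary of Theorem \ref{ConZC}, with the internalization issue (the schematic, metatheoretic character of that proof) resolved by appeal to Hinnion's development of first-order semantics in NFU, exactly as flagged in the footnote to Theorem \ref{ConZC}. You have merely spelled out the details the paper leaves implicit --- the coding of syntax in $\bar V$, the construction of the satisfaction set for $\langle \bar{\bar V}, \in \rangle$ using the strong cantorianicity witness $\bar{\bar B}$, and the routine soundness argument.
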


\begin{qu} \label{Q:ConsistencySrengthOfSstar}
What is the exact consistency strength of $S^*$ relative to an extension of ZFC?
\end{qu}

Motivated by Question \ref{Q:ConsistencySrengthOfSstar} Feferman and the first author of this paper proposed an extension of $S^*$ which is called $S^{**}$ in \cite{Fef06}. The theory $S^{**}$ is obtained from $S^*$ by adding the universal closure of the following axiom scheme:
\begin{itemize}
\item[11.] for all $\mathcal{L}^*$-formulae $\phi(x, \vec{Z})$,
$$\exists X (\forall x \in \bar V)(x \in X \iff \phi(x, \vec{Z})).$$ 
\end{itemize}
We will conclude this section by sketching how our proof that the theory $\mathrm{ZFC}+\exists \kappa$ ``$\kappa$ is an inaccessible cardinal'' proves the consistency of $S^*$ can be modified to show that the theory $\mathrm{ZFC}+\exists \kappa$ ``$\kappa$ is an inaccessible cardinal'' also proves the consistency of $S^{**}$. Work in the theory $\mathrm{ZFC}+\exists \kappa$ ``$\kappa$ is an inaccessible cardinal'' and let $\kappa$ be and inaccessible cardinal. Let $A^\prime= V_{\kappa+1}$. We can then define a fragment of infinitary logic $\mathcal{L}_{A^\prime}$ that is equipped with Skolem functions, and capable of asserting that a structure is an end-extension of $A^\prime$, with the property that $|\mathrm{Fm}_{A^\prime}|= 2^\kappa$. Therefore there are $2^{2^\kappa}= \beth_2(\kappa)$ many $\mathcal{L}_{A^\prime}$-theories. We now define
$$I= \{ V_{\eta_\alpha} \mid \alpha < \eta_{\beth_2(\kappa)^+}\}.$$
Therefore $|I|= \eta_{\beth_2(\kappa)^+}$ and $\mathrm{cf}(|I|)= \beth_2(\kappa)^+ > \beth_2(\kappa)$. We can then modify the definition of the structure $\mathcal{M}$ above by setting $M= V_{\eta_{\eta_{\beth_2(\kappa)^+}}}$ and expanding the interpretations to all symbols in $\mathcal{L}_{A^\prime}$. The resulting $\mathcal{L}_{A^\prime}$-structure is an end-extension of $A^\prime$, and satisfies $\mathrm{KPR}$ and the Skolem theory of $\mathcal{L}_{A^\prime}$. Using the same argument that we used above we can use $\mathcal{M}$ to build an $\mathcal{L}_{A^\prime}$-elementary equivalent structure $\mathcal{M}^{\prime\prime}$ that admits a non-trivial automorphism $j: \mathcal{M}^{\prime\prime} \longrightarrow \mathcal{M}^{\prime\prime}$. We define an $\mathcal{L}^*$-structure $\mathcal{N}$ from $\mathcal{M}^{\prime\prime}$ and $j$ in the same way as we did above. The structure $\mathcal{N}$ satisfies all of the axioms of $S^*$. Since $\mathcal{M}^{\prime\prime}$ is an end-extension of $A^\prime$, the structure $\langle \mathcal{P}(\bar V)^*, \in^\mathcal{N} \rangle$ will be isomorphic to $\langle V_{\kappa+1}, \in \rangle$. This is enough to ensure that axiom scheme 11 holds in $\mathcal{N}$. Thus we have:

\begin{thm}
$\mathrm{ZFC}+\exists \kappa$ ``$\kappa$ is an inaccessible cardinal'' $\vdash \mathrm{Con}(S^{**})$.
\end{thm}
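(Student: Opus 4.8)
The plan is to rerun the entire $\mathrm{Con}(S^*)$ construction \emph{verbatim}, altering only the base structure that the final model end-extends: in place of $A = V_\kappa \cup \{V_\kappa\}$ I would take $A' = V_{\kappa+1}$. The motivation comes from axiom scheme 11, which demands that every subcollection of $\bar V$ definable with class parameters be captured by a class. Under the isomorphism $\langle \bar V^*, \in^{\mathcal N}\rangle \cong \langle V_\kappa, \in\rangle$, such subcollections are precisely subsets of $V_\kappa$, i.e.\ elements of $\mathcal P(V_\kappa) = V_{\kappa+1}$; so it suffices to arrange that the classes of $\mathcal N$ lying below $\bar V$ realize a full copy of $V_{\kappa+1}$, which is achieved exactly when $\mathcal M^{\prime\prime}$ end-extends $V_{\kappa+1}$.

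First I would rebuild the Skolemized infinitary language $\mathcal L_{A'}$, now with a constant $\hat a$ for every $a \in V_{\kappa+1}$, together with an end-extension theory $T_{A'}$ and a Skolem theory $\mathrm{Sk}_{A'}$ exactly as before. Since $|A'| = 2^\kappa$, we get $|\mathrm{Fm}_{A'}| = 2^\kappa$ and hence $2^{2^\kappa} = \beth_2(\kappa)$ many $\mathcal L_{A'}$-theories, so the partition functions $f_n \colon [I]^{n+1} \to \mathcal P(\mathrm{Fm}_{A'})$ now carry $\beth_2(\kappa)$ colors rather than $2^\kappa$. Accordingly I would scale the index set up to $I = \{V_{\eta_\alpha} \mid \alpha < \eta_{\beth_2(\kappa)^+}\}$ and the ambient model to $M = V_{\eta_{\eta_{\beth_2(\kappa)^+}}}$, so that $\mathrm{cf}(|I|) = \beth_2(\kappa)^+ > \beth_2(\kappa)$. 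One checks, just as in the $S^*$ case, that each $V_{\eta_\alpha} = H_{\eta_\alpha}$ models $\mathrm{KPR}$, that $\mathcal M$ models $\mathrm{KPR} + \mathrm{Sk}_{A'} + T_{A'}$, and that $I \subseteq M$.

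With these parameters in place the combinatorial core is unchanged. The realizable-sequence argument (the analog of Lemma~\ref{Th:RealizableSequenceLemma}) goes through because $\mathrm{cf}(|I|)$ now strictly exceeds the number $\beth_2(\kappa)$ of colors; this, together with Erd\H{o}s--Rado (Lemma~\ref{Th:ErdosRado}) at each arity, yields the indiscernible sequence (the analog of Lemma~\ref{Th:IndiscernibleSequenceLemma}). Feeding this into Lemma~\ref{Th:InfinitaryLogicLemma} produces $\mathcal M'$ with $\mathbb Z$-indexed indiscernibles, whose Skolem hull $\mathcal M^{\prime\prime}$ carries a nontrivial automorphism $j$ satisfying $q_0 \cup \mathcal P(q_0) \subseteq j(q_0)$. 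Defining $\mathcal N$ exactly as before, the verification that $\mathcal N \models S^*$ (axioms 1--10) is word-for-word identical, since it relies only on $\langle \bar V^*, \in^{\mathcal N}\rangle \cong \langle V_\kappa, \in\rangle$, on $j$ fixing $N_{\mathrm{sets}}$, and on the automorphism condition --- none of which is disturbed by enlarging $A$ to $A'$.

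The one genuinely new step is axiom scheme 11. Because $\mathcal M^{\prime\prime}$ now end-extends $A' = V_{\kappa+1}$ and every element of $V_{\kappa+1}$ names a $j$-fixed constant, I would argue that $\langle \mathcal P(\bar V)^*, \in^{\mathcal N}\rangle \cong \langle V_{\kappa+1}, \in\rangle = \langle \mathcal P(V_\kappa), \in\rangle$. Fixing class parameters $\vec Z$ and a formula $\phi(x, \vec Z)$, the collection $\{x \in \bar V^* \mid \mathcal N \models \phi(x, \vec Z)\}$ is, externally, a subset of $\bar V^* \cong V_\kappa$ (legitimate since $\mathcal N$ is a set in the metatheory, so its satisfaction relation is available); transporting it along the isomorphism gives an element of $\mathcal P(V_\kappa) = V_{\kappa+1}$, hence a class $X \in \mathcal P(\bar V)^* \subseteq N$ with $\mathcal N \models (x \in X \leftrightarrow \phi(x, \vec Z))$ for all $x \in \bar V$. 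Thus $\mathcal N \models S^{**}$, and since $\mathcal N$ is a set, $\mathrm{Con}(S^{**})$ follows. The hard part will be purely arithmetical: confirming that with the inflated color set of size $\beth_2(\kappa)$ the Erd\H{o}s--Rado bound $|I| \ge \beth_n(\beth_2(\kappa))^+$ is met at every arity while keeping $M = V_{\eta_{\eta_{\beth_2(\kappa)^+}}}$ a genuine set below the inaccessible --- everything conceptual is inherited from the $S^*$ proof.
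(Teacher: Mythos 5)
Your proposal coincides essentially verbatim with the paper's own proof: it takes $A' = V_{\kappa+1}$ in place of $A$, rescales the color set to $\beth_2(\kappa)$ and correspondingly the index set $I = \{V_{\eta_\alpha} \mid \alpha < \eta_{\beth_2(\kappa)^+}\}$ and ambient model $M = V_{\eta_{\eta_{\beth_2(\kappa)^+}}}$, reruns the realizable-sequence/Erd\H{o}s--Rado/Barwise--Kunen machinery unchanged to get $\mathcal{M}''$ with its automorphism, and verifies axiom scheme 11 exactly as the paper does, via $\langle \mathcal{P}(\bar V)^*, \in^{\mathcal N}\rangle \cong \langle V_{\kappa+1}, \in \rangle$ (indeed you spell out this last step in slightly more detail than the paper's sketch). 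One small wording slip at the end: $M$ need not --- and does not --- lie \emph{below} the inaccessible $\kappa$; it suffices that it is a set of the ZFC universe, which the $\beth$-fixed-point hierarchy guarantees without any further large cardinal, and this is precisely why one inaccessible suffices.
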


\begin{qu}
What is the exact consistency strength of of $S^{**}$ relative to an extension of ZFC?
\end{qu}


\begin{thebibliography}{}

\bibitem[AF15]{AF15}
Carolin Antos and Sy-David Friedman.
\newblock {Hyperclass Forcing in Morse-Kelley Class Theory.} \newblock {\url{http://arxiv.org/abs/1510.04082}, 2015.}

\bibitem[BK71]{BK71}
Jon Barwise and Kenneth Kunen.
\newblock {Hanf numbers for frgments of $L_{\infty \omega}$.}
\newblock {{\em Israel Journal of Mathematics} 10, 306-320, 1971.}

\bibitem[Gab14]{Gab14}
Murdoch J. Gabbay.
\newblock {Consistency of Quine's New Foundations using nominal techniques.}
\newblock {\url{http://arxiv.org/abs/1406.4060}, 2014.}

\bibitem[Ena04]{Ena04}
Ali Enayat.
\newblock {Automorphisms, Mahlo cardinals, and NFU.}
\newblock {In (A. Enayat and R. Kossak, eds.), {\em Nonstandard Models of Arithmetic and Set Theory, Contemporary Mathematics} 361, American Mathematical Society (Providence), 2004.}

\bibitem[ER56]{ER56}
Paul Erd\H{o}s and Richard Rado.
\newblock {A partition calculus in set theory.}
\newblock {{\em Bulletin of the American Mathematical Society} 62, pp. 427-488, 1956.}

\bibitem[Ern15]{Ern15}
Michael Ernst.
\newblock {The prospects of unlimited category theory: doing what remains to be done.}
\newblock {{\em The Review of Symbolic Logic}, v. 8, n. 2, June 2015.}

\bibitem[Fef69]{Fef69}
Solomon Feferman.
\newblock {Set-theoretical foundations for category theory (with an appendix by G. Kreisel).}
\newblock {In (M. Barr et al., eds.), {\em Reports of the Midwest Category Seminar III, Lecture Notes in Mathematics} 106, pp. 201-247, Springer-Verlag (Berlin), 1969.}

\bibitem[Fef74]{Fef74}
Solomon Feferman.
\newblock {Some formal systems for the unlimited theory of structures and categories.}
\newblock {\url{http://math.stanford.edu/~feferman/
papers/Unlimited.pdf}, 1974.}

\bibitem[Fef75]{Fef75} 
Solomon Feferman.
\newblock {A language and axioms for explicit mathematics.} 
\newblock {In {\em Algebra and Logic, Lecture Notes in Mathematics}, 450, pp. 87-139, Springer-Verlag (Berlin), 1975.}

\bibitem[Fef77]{Fef77}
Solomon Feferman.
\newblock {Categorical foundations and foundations of category theory.}
\newblock {In (R.E. Butts and J. Hintikka, eds.), {\em Logic, Foundations of Mathematics, and Computability Theory}, pp. 149-169, Reidel (Dordrecht), 1977.}

\bibitem[Fef04]{Fef04}
Solomon Feferman.
\newblock {Typical ambiguity. Trying to have your cake and eat it too.}
\newblock {In (G. Link, ed.), {\em One Hundred Years of Russell's Paradox}, pp. 135-151, Walter de Gruyter (Berlin), 2004.}

\bibitem[Fef06]{Fef06}
Solomon Feferman.
\newblock {Enriched Stratified systems for the Foundations of Category Theory.}
\newblock {In (G. Sica, ed.), {\em What is category theory?}, Polimetrica (Milan), 2006.}
\newblock {Preprint on-line: \url{http://math.stanford.edu/~feferman/papers/ess.pdf}}

\bibitem[Fef09]{Fef09} 
Solomon Feferman.
\newblock {Operational set theory and small large cardinals.} 
\newblock {{\em Information and Computation}, 207, pp. 971-979, 2009.}

\bibitem[Fef13]{Fef13}
Solomon Feferman.
\newblock {Foundations of unlimited category theory: What remains to be done.} 
\newblock {{\em The Review of Symbolic Logic} 6, pp. 6-15, 2013.}


\bibitem[For07]{For07}
Thomas E. Forster.
\newblock {Why the Sets of NF do not form a Cartesian-closed Category. 2007.}
\newblock \url{https://www.dpmms.cam.ac.uk/~tf/cartesian-closed.pdf}


\bibitem[Hin75]{Hin75}
Roland Hinnion. 
\newblock {Sur la th\'eorie des ensembles de Quine.}
\newblock {Ph.D. thesis, ULB Brussels, 1975.}
\newblock {English translation by Thomas E. Forster: \url{http://www.logic-center.be/Publications/Bibliotheque/hinnionthesis.pdf}}

\bibitem[Hol98]{Hol98}
M. Randall Holmes.
\newblock {{\em Elementary Set Theory with a Universal Set}.}
\newblock {{\em Cahiers du Centre de logique} (Volume 10), Louvain-la-Neuve: Academia, 1998.}
\newblock {Preprint of revised and corrected version on-line: \url{http://math.boisestate.edu/~holmes/holmes/head.pdf}}

\bibitem[Hol15]{Hol15}
M. Randall Holmes.
\newblock {The consistency of NF.}
\newblock \url{http://arxiv.org/abs/1503.01406}, 2015.

\bibitem[Jen69]{Jen69}
Ronald B. Jensen.
\newblock {On the consistency of a slight (?) modification of Quine's NF.}
\newblock {{\em Synthese}, 19, pp. 250-263, 1969.}

\bibitem[Law69]{Law69}
F. William Lawvere.
\newblock {Diagonal arguments and cartesian closed categories.}
\newblock {{\em Lecture Notes in Mathematics}, 92, pp. 134-145, 1969.}
\newblock {Reprinted: {\em Reprints in Theory and Applications of Categories}, 15, pp. 1-13, 2006.}

\bibitem[Mac98]{Mac98}
Saunders Mac Lane.
\newblock {\em{Categories for the Working Mathematician}.}
\newblock {Springer, 2:nd edition, 1998.}

\bibitem[Mat01]{Mat01}
Adrian R. D. Mathias.
\newblock {The strength of Mac Lane set theory.}
\newblock {{\em Annals of Pure and Applied Logic} 110, pp. 107-234, 2001.}

\bibitem[McL92]{McL92}
Colin McLarty.
\newblock {Failure of cartesian closedness in NF.}
\newblock {{\em Journal of Symbolic Logic}, 57, pp. 555-556, 1992.}

\bibitem[Qui37]{Qui37}
Willard V. Quine.
\newblock {New foundations for mathematical logic.}
\newblock {{\em American Mathematical Monthly}, 44, pp. 111-115, 1937.}

\bibitem[Shu08]{Shu08}
Michael A. Shulman.
\newblock {Set theory for category theory. 2008.}
\newblock {\url{http://arxiv.org/abs/0810.1279}}

\end{thebibliography}
\end{document}